\title{Convergence of adaptive boundary element methods driven by functional a~posteriori error estimates}
\author{Alexander Freiszlinger}
\author{Dirk Pauly}
\author{Dirk Praetorius}
\address{TU Wien, Institute of Analysis and Scientific Computing, Wiedner Hauptstraße 8-10, 1040 Wien, Austria}
\email{alexander.freiszlinger@tuwien.ac.at \quad \rm (corresponding author)}
\email{dirk.praetorius@tuwien.ac.at}
\address{TU Dresden, Institute of Analysis, Zellescher Weg 12-14, 01069 Dresden, Germany}
\email{dirk.pauly@tu-dresden.de}
\def\complement{{\sf c}}
\def\dist{{\rm dist}}
\def\exact{\star}
\def\ssigma{\boldsymbol{\sigma}}
\def\ttau{\boldsymbol{\tau}}
\def\nn{\boldsymbol{n}}
\def\osc{{\rm osc}}
\keywords{boundary element method, adaptive mesh-refinement, functional a~posteriori error estimates, convergence of adaptive algorithms}
\subjclass[2020]{65N38, 65N15, 65N50, 65N12}
\thanks{This research was funded by the Austrian Science Fund (FWF) projects
\href{https://www.fwf.ac.at/en/research-radar/10.55776/F65}{10.55776/F65} (SFB
F65 ``Taming complexity in PDE systems''),
\href{https://www.fwf.ac.at/en/research-radar/10.55776/I6802}{10.55776/I6802}
(international project I6802 ``Functional error estimates for PDEs on unbounded
domains''), and
\href{https://www.fwf.ac.at/en/research-radar/10.55776/P33216}{10.55776/P33216}
(standalone project P33216 ``Computational nonlinear PDEs'').}
\begin{document}
\maketitle

\begin{abstract}
The recent work [Kurz \textsl{et al.}, Numer.\ Math., 147 (2021)] proposed functional \textsl{a~posteriori} error estimates for boundary element methods (BEMs) together with a related adaptive mesh-refinement strategy. Unlike most \textsl{a~posteriori} BEM error estimators, the proposed functional error estimators cover Galerkin as well as collocation BEM and, more importantly, do not control the error in the integral density on the boundary, but the error of the potential approximation in the domain, which is of greater relevance in practice. The estimates rely on the numerical solution of auxiliary problems on auxiliary strip domains along the boundary, where the strips are affected by the adaptive mesh-refinement and hence vary. For Galerkin BEM, we prove that the proposed adaptive mesh-refinement algorithm yields convergence of the potential error to zero. Due to the structural difference to residual-based estimators, the proof requires new ideas.
\end{abstract}

\section{Introduction}

\noindent
Let $\Omega \subset \R^d$, $d \geq 2$, be a bounded Lipschitz domain with polyhedral boundary $\Gamma \coloneqq \partial\Omega$. We consider the Poisson problem with inhomogeneous Dirichlet boundary data $g$, i.e.,
\begin{align}\label{eq:poisson}
 \Delta u^\exact = 0
 \quad \text{in } \Omega
 \quad \text{subject to} \quad
 u^\exact = g
 \quad \text{on } \Gamma.
\end{align}
For the numerical solution of~\eqref{eq:poisson}, we employ the boundary element method (BEM).
For the ease of presentation in this introduction, we consider an indirect ansatz based on the single-layer potential
\begin{align}\label{eq:slp}
 (\widetilde V\phi^\exact)(x) 
 \coloneqq \int_\Gamma G(x-y) \phi^\exact(y) \, \d{y} 
 = u^\exact(x) 
 \quad \text{for all } x \in \Omega
\end{align}
with unknown density $\phi^\exact$, where 
\begin{equation} \label{eq:fundamental}
 G(x) = \begin{cases} - \frac{1}{2\pi} \log|x|  & \text{for } d = 2 \\
                       \frac{1}{(d-2)\abs{\S_{d-1}}} \abs{x}^{2-d} & \text{for } d \geq 3
 \end{cases}
\end{equation}
denotes the fundamental solution of the Laplacian. Here, $\abs{\S_{d-1}}$ denotes the surface area of the unit sphere in $\R^d$. Taking the trace of~\eqref{eq:slp} on $\Gamma$, we obtain the weakly-singular boundary integral equation 
\begin{align}\label{eq:bie}
 (V\phi^\exact)(x) = g(x)
 \quad \text{for almost all } x \in \Gamma,
\end{align}
where the integral representation of $(V\phi^\exact)(x)$ coincides with that of $(\widetilde V\phi^\exact)(x)$ (at least for bounded densities) but is now evaluated on $\Gamma$ (instead of the interior of $\Omega$). Supposing that $\diam(\Omega) < 1$ in case of $d = 2$ (which can always be ensured by scaling of $\Omega$), the boundary integral equation~\eqref{eq:bie} admits a unique solution $\phi^\exact \in H^{-1/2}(\Gamma)$; see, e.g., \cite{Mclean2000}. Given a triangulation $\TT_\ell^\Gamma$ of the boundary $\Gamma$, the boundary element method (BEM) approximates $\phi^\exact \approx \phi_\ell \in \PP^p(\TT_\ell^\Gamma)$ by a $\TT_\ell^\Gamma$-piecewise polynomial of degree $p \in \N_0$, where the index $\ell \in \N_0$ indicates the sequential step of a mesh-refining adaptive algorithm. Independently of how $\phi_\ell$ is computed (e.g., Galerkin BEM or collocation BEM with/without inexact iterative solver), this yields an approximation
\begin{align}\label{eq:approximation}
 u_\ell \coloneqq \widetilde V \phi_\ell
 \quad \text{satisfying} \quad
 \Delta u_\ell = 0 
 \quad \text{in } \Omega.
\end{align}
Unlike usual notation, we stress that $u_\ell$ is \emph{not} a discrete function but computed by an integral operator applied to the discrete density $\phi_\ell$, i.e., $u_\ell$ is only data sparse. Since $\nabla u^\exact \approx \nabla u_\ell$ is usually of physical relevance and thus targeted by numerical simulation, the work~\cite{Kurz2020} proposed the functional \textsl{a~posteriori} error estimate 
\begin{align}\label{eq:estimation}
 \begin{split}
 \norm{\ttau_\ell}_{L^2(\omega_\ell)}
 &\le 
 \norm{\nabla(u^\exact - u_\ell)}_{L^2(\Omega)}
 \le
 \norm{\nabla w_\ell}_{L^2(\omega_\ell)} 
 + \const{C}{osc} \, \osc_\ell,
 \end{split}
\end{align}
where $\ttau_\ell \in \boldsymbol{H}({\rm div}, \omega_\ell)$ and $w_\ell \in H^1(\omega_\ell)$ are computable finite element functions that solve auxiliary problems on an $\ell$-dependent and shrinking strip $\omega_\ell \subseteq \Omega$ and $\osc_\ell$ are appropriate data approximation terms; see Section~\ref{section:mainresult} below.
The proposed adaptive strategy in~\cite{Kurz2020} relies only on the upper bound in~\eqref{eq:estimation} and adapts the strip $\omega_\ell$ in each step of the adaptive loop so that the necessary degrees of freedoms to compute $\ttau_\ell$ and $w_\ell$ appear to be proportional to the number of boundary elements $\#\TT_\ell^\Gamma$. 
Of particular relevance is the fact that the upper and lower bound in \eqref{eq:estimation} hold with \emph{known constant} $1$, where the adaptive algorithm, employed for Galerkin BEM with discrete density $\phi_\ell^\exact = \phi_\ell \in \PP^p(\TT_\ell)$ and induced potential $u_\ell^\exact \coloneqq \widetilde V\phi_\ell^\exact$, empirically led to optimal convergence rates for the error and accurate error estimation $\norm{\nabla(u^\exact - u_\ell^\exact)}_{L^2(\Omega)} \approx \norm{\nabla w_\ell}_{L^2(\omega_\ell)}$ in the experiments of~\cite{Kurz2020}.

The purpose of the present work is to show that the proposed adaptive strategy applied for Galerkin BEM indeed guarantees plain convergence 
\begin{align}\label{eq:convergence}
 \norm{\nabla(u^\exact - u_\ell^\exact)}_{L^2(\Omega)} \to 0
 \quad \text{as } \ell \to \infty, 
\end{align}
where the index $\ell \in \N_0$ indicates the sequential nature of the adaptive algorithm proposed in~\cite{Kurz2020}. 
While similar and even stronger results are available for adaptive Galerkin methods that aim for the optimal adaptive approximation of the integral density $\phi^\exact$ (see, e.g.,~{\cite{Feischl2013,Gantumur2013,Feischl2014a,Gantner2022,Gantner2022a}), we note that the present analysis is impacted (and challenged) by the fact that the strip domain $\omega_\ell \subseteq \Omega$ is changed in each step of the adaptive loop.

We note that our proof of convergence~\eqref{eq:convergence} hinges on the use of Galerkin BEM as this allows to guarantee \textsl{a~priori} convergence of the computed integral densities $\phi_\ell^\exact \to \phi_\infty^\exact$ in the natural Sobolev space $H^{-1/2}(\Gamma)$ as well as on the assumption of additional regularity $g \in H^1(\Gamma)$ (despite only $g \in H^{1/2}(\Gamma)$) for the given Dirichlet data in~\eqref{eq:poisson} that, as in other works~\cite{Bartels2004,afkpp2013,Feischl2014a,Feischl2014}, is exploited to localize the data approximation terms.

\textbf{Outline.} Section~\ref{section:preliminaries} collects all necessary preliminaries and notation on Sobolev spaces on domains and boundaries, meshes and patches, discrete spaces, and Galerkin BEM. In Section~\ref{section:mainresult}, we first recall the main idea of functional-based BEM error estimation from~\cite{Kurz2020} and then formulate the related adaptive algorithm (Algorithm~\ref{algo:adap}). Theorem~\ref{theorem:mainresult} is the main result of this work. It states, first, that the functional upper bound provides also a lower bound for the potential error (as well as the BEM error) and, second, that Algorithm~\ref{algo:adap} guarantees convergence. The proof of Theorem~\ref{theorem:mainresult} is given in Section~\ref{section:proof-mainresult}--\ref{section:extension}. Section~\ref{section:numerics} concludes the work with some numerical experiments that underline the theory and show that, at least empirically, the proposed strategy leads to optimal convergence rates.

\section{Preliminaries and Notation}
\label{section:preliminaries}

\setcounter{subsection}{0}

\subsection{General notation}

Throughout this paper, let $\Omega$ be a bounded domain in $\R^d$, $d \geq 2$, with polyhedral Lipschitz boundary $\Gamma \coloneqq \partial \Omega$. By $\abs{\cdot}$, we denote, without any ambiguity, the absolute value of a scalar, the Euclidean norm of a vector in $\R^d$, the $(d-1)$-dimensional Hausdorff measure of a surface (piece), or the $d$-dimensional Lebesgue volume of a measurable set.

Again, without any ambiguity, we write $\int_\Xi \cdot \d x$ for integration over a Lebesgue measurable set $\Xi \subseteq \Omega$ and over a Hausdorff measurable set $\Xi \subseteq \Gamma$.

For Lipschitz domains $\Xi \subseteq \Omega$, we denote by $\bm{n}_{\partial \Xi}$ the outward pointing unit normal vector on $\partial \Xi$.

For $1 < r < +\infty$, $\norm{\cdot}_{L^r(\Xi)}$ denotes the usual $L^r$-norm and $1 < r' < +\infty$ is  the Hölder conjugate, i.e., $1/r + 1/r' = 1$. For $r=2$, we additionally write $\dual{\cdot}{\cdot}_{\Xi}$ for the $L^2(\Xi)$-inner product. For vector-valued spaces $[L^r(\Xi)]^n$ with $n \in \N$, we will omit the superscript if it is clear from the context. 

Throughout, discrete (or data-sparse) quantities are indicated by an index, e.g., $\TT_\ell^\Gamma$ is a conforming triangulation of $\Gamma$ with corresponding Galerkin BEM approximation $\phi_\ell^\exact$ and induced potential $u_\ell^\exact$. Moreover, we always use the same index for related quantities, e.g., $\phi_\ell$ is a BEM approximation corresponding to $\TT_\ell^\Gamma$ etc.
Finally, we abbreviate notation in proofs and write $A \lesssim B$, if there holds $A \le cB$ with a constant $c > 0$ which is independent of the adaptivity index $\ell \in \N_0$ and clear from the context. We write $A \simeq B$ to abbreviate $A \lesssim B \lesssim A$.

\subsection{Integer-order Sobolev spaces}

For Lipschitz domains $\Xi \subseteq \Omega$, we define the usual integer-order Sobolev space
\begin{equation*}
 W^{1,r}(\Xi) \colon= \set{w \in L^r(\Xi) \given \nabla w \in L^r(\Xi)}
\end{equation*}
with corresponding norm
\begin{equation*}
 \norm{w}_{W^{1,r}(\Xi)} \coloneqq \bigl( \norm{w}_{L^r(\Xi)}^r + \norm{\nabla w}_{L^r(\Xi)}^r \bigr)^{1/r},
\end{equation*}
where $\nabla$ denotes the distributional gradient. Furthermore, defining $C_0^\infty(\Xi)$ as the space of smooth functions with compact support in $\Xi$, we set
\begin{equation*}
 W^{1,r}_{\partial \Xi}(\Xi) \coloneqq \overline{C_0^\infty(\Xi) \cap W^{1,r}(\Xi)}^{W^{1,r}(\Xi)}.
\end{equation*}
\begin{remark}
 Usually, one employs the notation $W^{1,r}_0(\Omega) = W^{1,r}_{\partial \Omega}(\Omega) = W^{1,r}_\Gamma(\Omega)$ for the space of $W^{1,r}$-functions with zero boundary conditions. However, our approach involves subdomains $\Xi \subseteq \Omega$ and corresponding boundary conditions. We therefore decided to use the notation $W^{1,r}_{\partial \Xi}(\Xi)$ to be explicit on these boundary conditions.
\end{remark}
For boundary pieces $\Xi \subseteq \Gamma$, we define 
\begin{equation*}
 W^{1,r}(\Xi) 
 \coloneqq 
 \set{\varphi \in L^r(\Xi) \given \nabla_{\Gamma} \varphi \in L^r(\Xi)}
\end{equation*}
with corresponding norm 
\begin{equation*}
 \norm{\varphi}_{W^{1,r}(\Xi)} 
 \coloneqq 
 \bigl( \norm{\varphi}_{L^r(\Xi)}^r + \norm{\nabla_{\Gamma} \varphi}_{L^r(\Xi)}^r \bigr)^{1/r},
\end{equation*}
where $\nabla_\Gamma$ denotes the surface gradient on the boundary $\Gamma$. For $r=2$, we write $H^1(\Xi) = W^{1,2}(\Xi)$ for either $\Xi \subseteq \Omega$ or $\Xi \subseteq \Gamma$. Additionally, we also write $H^1_{\partial \Xi}(\Xi) = W^{1,2}_{\partial \Xi}(\Xi)$ for $\Xi \subseteq \Omega$ being a Lipschitz domain. Moreover, we use the convention that $W^{0,r}(\Xi) = L^r(\Xi)$ and $H^0(\Xi) = L^2(\Xi)$ as well as $\norm{\cdot}_{W^{0,r}(\Xi)} = \norm{\cdot}_{L^r(\Xi)}$ and $\norm{\cdot}_{H^0(\Xi)} = \norm{\cdot}_{L^2(\Xi)}$ for $\Xi \subseteq \Omega$ or $\Xi \subseteq \Gamma$.

\subsection{Fractional-order Sobolev spaces}

For $0 < s < 1$, we define the $W^{s,r}(\Xi)$-seminorm as
\begin{equation*}
 |w|_{W^{s,r}(\Xi)} \coloneqq \begin{cases} \left( \int_{\Xi} \int_{\Xi} \frac{|w(x) - w(y)|^r}{\abs{x-y}^{d + sr}} \d x \d y \right)^{1/r} & \text{if } \Xi \subseteq \Omega, \\[1ex]
 \left( \int_{\Xi} \int_{\Xi} \frac{|w(x) - w(y)|^r}{\abs{x-y}^{d - 1 + sr}} \d x \d y \right)^{1/r} & \text{if }\Xi \subseteq \Gamma,
 \end{cases}
\end{equation*} 
and, additionally,
\begin{equation*}
|w|_{W^{1,r}(\Xi)} \coloneqq \begin{cases} \norm{\nabla w}_{L^2(\Xi)} & \text{if } \Xi \subseteq \Omega, \\[1ex]
\norm{\nabla_\Gamma w}_{L^2(\Xi)}  & \text{if }\Xi \subseteq \Gamma.
	\end{cases}
\end{equation*}
For $0 < s < 2$ and $s \neq 1$ we define the $W^{s,r}(\Xi)$-norm as
\begin{equation*}
 \norm{w}_{W^{s,r}(\Xi)} \coloneqq (\norm{w}_{W^{\lfloor s \rfloor,r}(\Xi)}^r + |w|_{W^{s - \lfloor s \rfloor,r}(\Xi)}^r)^{1/r},
\end{equation*}
establishing the fractional-order Sobolev space
\begin{equation*}
 W^{s,r}(\Xi) \coloneqq \set{w \in L^r(\Xi) \given \norm{w}_{W^{s,r}(\Xi)} < +\infty},
\end{equation*}
where $\N_0 \ni \lfloor s \rfloor \leq s$ denotes the largest possible integer and $W^{\lfloor s \rfloor,r}(\Xi)$ is the usual integer-order Sobolev space. In accordance with the definition of integer-order Sobolev spaces, we set $H^s(\Xi) \coloneqq W^{s,2}(\Xi)$ for $0 < s < 2$ and $s \neq 1$. Given a Lipschitz domain $\Xi \subseteq \Omega$, the trace operator $(\cdot) |_{\partial \Xi} \colon W^{s,r}(\Xi) \rightarrow W^{s - 1/r,r}(\partial \Xi)$ is well-defined and bounded for $1/r < s < 2$. Furthermore, there holds
\begin{equation*}
 W^{1,r}_{\partial \Xi}(\Xi) = \rm{ker}((\cdot) |_{\partial \Xi}) \cap W^{1,r}(\Xi).
\end{equation*}
Lastly, we note that the $W^{s,r}$-norm and $W^{s,r}$-seminorm are absolutely continuous with respect to the measure $\abs{\cdot}$ of the underlying space, i.e., for all $w \in W^{s,r}(\Xi)$, there holds
\begin{equation} \label{eq:abscont}
 \lim_{\abs{T} \to 0} \norm{w}_{W^{s,r}(T)} = 0 = \lim_{\abs{T} \to 0} \abs{w}_{W^{s,r}(T)}.
\end{equation}

\subsection{Negative-order Sobolev spaces on the boundary}

Let $\Xi \subseteq \Omega$ be a Lipschitz domain. For $0 \leq s \leq 1$, we define the negative-order Sobolev spaces on $\partial \Xi$ as
\begin{equation*}
	W^{-s,r}(\partial \Xi) \coloneqq (W^{s,r'}(\partial \Xi))',
\end{equation*}
where the duality pairing is defined by continuous extension of the pairing between $L^{r'}(\partial \Xi)$ and $L^{r}(\partial \Xi)$ and, without any ambiguity, is denoted by $\dual{\cdot}{\cdot}_{\partial \Xi}$. The space $H^{-s}(\partial \Xi)$ and the associated duality pairing is defined accordingly.

\subsection{The space \boldmath $H(\div,\Xi)$}

For $\Xi \subseteq \Omega$ being a Lipschitz domain, we set
\begin{equation*}
	H(\mathrm{div},\Xi) \coloneqq \set{\bm{\sigma} \in [L^2(\Xi)]^d \given \div \bm{\sigma} \in L^2(\Xi)}
\end{equation*}
equipped with the usual graph norm
\begin{equation*}
	\norm{\bm{\sigma}}_{H(\mathrm{div},\Xi)} \coloneqq \left( \norm{\bm{\sigma}}_{L^2(\Xi)}^2 + \norm{\div \bm{\sigma}}_{L^2(\Xi)}^2 \right)^{1/2}.
\end{equation*}
We note that the normal traces $\bm{\sigma}|_{\partial \Xi} \cdot \bm{n}_{\partial \Xi}$ of $\bm{\sigma} \in H(\div,\Xi)$ are well-defined and belong to $H^{-1/2}(\partial \Xi)$ by Green's formula$\colon$ For given $\varphi \in H^{1/2}(\partial \Xi)$, we choose $w \in H^1(\Xi)$ such that $w|_{\partial \Xi} = \varphi$. Then, we define
\begin{equation*}
 \dual{\bm{\sigma}|_{\partial \Xi} \cdot \bm{n}_{\partial \Xi}}{\varphi}_{\partial \Xi} \coloneqq \dual{\bm{\sigma}}{\nabla w}_{\Xi} + \dual{w}{\div \bm{\sigma}}_{\Xi}.
\end{equation*}

\subsection{Meshes}

A $k$-dimensional simplex $T \subseteq \R^d$ is a compact set satisfying $T = \mathrm{conv} (\NN_T)$ with vertices $\NN_T = \set{z_0, \ldots , z_k}$ for some $z_0, \ldots , z_k \in \R^d$ and $\abs{T} > 0$ for the $k$-dimensional measure. Let $\TT_\ell$ be a conforming mesh on $\Omega$, i.e., a partition of $\overline{\Omega}$ into $d$-dimensional simplices $T$ such that the intersection of two different elements $T \cap T'$ is either empty or the joint $k$-dimensional compact simplex $T \cap T' = \mathrm{conv}(\NN_T \cap \NN_{T'})$. Throughout, we assume that all considered meshes $\TT_{\ell}$ are obtained by a finite amount of newest-vertex bisection refinements of some initial conforming mesh $\TT_0$ of $\Omega$. We refer to~\cite{Stevenson2008} for newest vertex bisection for $d \geq 2$ with admissible $\TT_0$, as well as to \cite{Karkulik2012} for $d=2$ and to~\cite{Diening2025} for $d \geq 2$ with non-admissible $\TT_0$. It is well-known that newest vertex bisection guarantees uniform $\kappa$-shape regularity, i.e., there exists $\kappa > 0$ depending only on $\TT_0$ such that each refined mesh $\TT_\ell$ satisfies
\begin{equation*}
 \sup_{T \in \TT_\ell} \frac{\mathrm{diam}(T)}{\abs{T}^{1/d}} 
 \leq 
 \kappa.
\end{equation*}
Moreover, $\TT_\ell$ induces a conforming boundary mesh on $\Gamma$ by
\begin{equation} \label{eq:boundarymesh}
 \TT_\ell^\Gamma \coloneqq \TT_\ell|_\Gamma \coloneqq \set{T \cap \Gamma \given T \in \TT_\ell \text{ with }(d-1) \text{-dimensional measure } \abs{T \cap \Gamma} > 0},
\end{equation}
which is also $\kappa$-shape regular in the sense of
\begin{equation*}
\sup_{F \in \TT_\ell^\Gamma} \frac{\mathrm{diam}(F)}{\abs{F}^{d-1}} \leq \kappa.
\end{equation*}
We denote by $\NN_\ell$, $\EE_\ell$ and $\FF_\ell$ the sets of ($0$-dimensional) nodes, ($1$-dimensional) edges and ($(d-1)$-dimensional) facets of the mesh $\TT_\ell$, respectively. We define the patch of a set $U \subseteq \overline{\Omega}$ by
\begin{subequations} \label{eq:patch}
\begin{equation} \label{eq:1patch}
	\Omega_\ell[U] \coloneqq \mathrm{interior} \Bigl(\bigcup \TT_\ell[U]\Bigr) \quad \text{with} \quad \TT_\ell[U] \coloneqq \set{T \in \TT_\ell \given T \cap \overline{U} \neq \emptyset}.
\end{equation}
For $k \in \N$, we define the $k$-patch of $U$ as
\begin{equation} \label{eq:kpatch}
	\Omega_\ell^{[k]}[U] \coloneqq \mathrm{interior} \Bigl(\bigcup \TT_\ell^{[k]}[U] \Bigr)\quad \text{with} \quad \TT_\ell^{[k]}[U] \coloneqq \begin{cases} \TT_\ell[\Omega_\ell^{[k-1]}[U]] & \text{for } k > 1, \\
	\TT_\ell[U] & \text{for } k = 1
	\end{cases}.
\end{equation}
\end{subequations}
To ease notation, we write $\Omega_\ell^{[k]}[z] \coloneqq \Omega_\ell^{[k]}(\set{z})$ and $\TT_\ell^{[k]}[z] \coloneqq \TT_\ell^{[k]}(\set{z})$ for any $z \in \NN_\ell$. The $k$-patch $\Gamma_\ell^{[k]}[U]$ on the boundary is defined accordingly, with the volume mesh $\TT_\ell$ replaced by the induced boundary mesh $\TT_\ell^\Gamma$.

We define the piecewise constant mesh-size function $h_\ell \in L^\infty(\Omega)$ by $h_\ell|_T \coloneqq h_T \coloneqq |T|^{1/d}$ for every $T \in \TT_\ell$. The quantities $h_E \coloneqq \abs{E} = \mathrm{diam}(E)$ and $h_F \coloneqq \abs{F}^{1/(d-1)}$ for edges $E \in \EE_\ell$ and facets $F \in \FF_\ell$, respectively, are defined analogously. 

\subsection{Discrete spaces}

Given $q \in \N$, let $\P^{q-1}(U)$ be the space of polynomials of degree at most $q-1$ defined on some set $U$ and denote by
\begin{equation*}
 \PP^{q-1}(\TT_\ell) \coloneqq \set{v \in L^2(\Omega) \given v|_T \in \P^{q-1}(T) \text{ for all } T \in \TT_\ell}
\end{equation*}
the space of $\TT_\ell$-piecewise polynomials of degree at most $q-1$. 

We define the usual FEM-space of globally continuous $\TT_\ell$-piecewise polynomials by
\begin{equation*}
	\SS^q(\TT_\ell) \coloneqq \set{v \in \PP^q(\TT_\ell) \given v \text{ is continuous on } \Omega}  = \PP^q(\TT_\ell) \cap H^1(\Omega).
\end{equation*}
Given $\Upsilon \subseteq \Gamma$, we set
\begin{equation*}
 \SS_\Upsilon^q(\TT_\ell) \coloneqq \set{v \in \SS^q(\TT_\ell) \given v|_{\Upsilon} = 0}.
\end{equation*}
Furthermore, we define the $H(\div,\Omega)$-conforming Raviart--Thomas space
\begin{equation*}
 \RR\TT^{q-1}(\TT_\ell) \coloneqq \set{\bm{\sigma} \in H(\div,\Omega) \given \bm{\sigma}|_T \in [\P^{q-1}(T)]^d + \P^{q-1}(T) \cdot \mathrm{id} \text{ for all } T \in \TT_\ell},
\end{equation*}
where $\mathrm{id}(x) = x$ denotes the identity. Given $\Upsilon \subseteq \Gamma$, we set
\begin{equation*}
 \RR\TT^{q-1}_\Upsilon(\TT_\ell) \coloneqq \set{\bm{\sigma} \in \RR\TT^{q-1}(\TT_\ell) \given \bm{\sigma}|_{\Upsilon} \cdot \bm{n}_{\Upsilon} = 0}.
\end{equation*}
Discrete spaces on $\Gamma$ and $\omega_\ell$ are defined accordingly. Lastly, we introduce the $L^2$-orthogonal projections
\begin{equation*}
 \begin{split}
  Q_\ell^\Omega &\colon L^2(\Omega) \to \PP^0(\TT_\ell), 
  \quad Q_\ell^\Omega(w)|_T \coloneqq \dual{w}{1}_T/\abs{T}, \\
  Q_\ell^\Gamma &\colon L^2(\Gamma) \to \PP^0(\TT_\ell^\Gamma), 
  \quad Q_\ell^\Gamma(\varphi)|_F \coloneqq \dual{\varphi}{1}_F/\abs{F}.
 \end{split}
\end{equation*}
Given $0 < s \leq 1$, $Q_\ell^\Omega$ and $Q_\ell^\Gamma$ satisfy
\begin{equation} \label{eq:avgprop}
 \begin{split}
  \norm{w - Q_\ell^\Omega w}_{L^2(T)} 
  &\leq C
  h_T^s \abs{w}_{H^s(T)} 
  \quad \text{for all } T \in \TT_\ell \text{ and } w \in H^s(T), \\
  \norm{\varphi - Q_\ell^\Gamma \varphi}_{L^2(F)} 
  &\leq C
  h_F^s \abs{\varphi}_{H^s(F)} 
  \quad \text{for all } F \in \TT_\ell^\Gamma \text{ and } \varphi \in H^s(F),
 \end{split}
\end{equation}
with a constant $C > 0$ depending only on $\kappa$-shape regularity.

\subsection{Bases of discrete spaces}
For $U \subseteq \overline{\Omega}$ we denote by $\DD_\ell^q(U)$ the set of all Lagrange nodes of the space $\SS^q(\TT_\ell)$ in $U$. We abbreviate $\DD_\ell^q \coloneqq \DD_\ell^q(\overline{\Omega})$ and let $(\zeta_{\ell,z}^q)_{z \in \DD_\ell^q}$ be the corresponding nodal basis of $\SS^q(\TT_\ell)$, i.e., $\zeta_{\ell,z}^q(z') = \delta_{zz'}$ for all $z,z' \in \DD_\ell^q$ with Kroneckers's delta $\delta$. Hence, there holds
\begin{equation*}
 v_\ell = \sum_{z \in \DD_\ell^q} \alpha_z \zeta_{\ell,z}^q \quad \text{for all } v_\ell \in \SS^q(\TT_\ell)
\end{equation*}
with unique coefficients $\alpha_z = v_\ell(z) \in \R$. For every $T \in \TT_\ell$, the functions $(\zeta_{\ell,z}^q|_T)_{z \in \DD_{\ell}^q(T)}$ are related to the standard basis of $\P^q(T_{\mathrm{ref}})$, denoted by $(\widetilde{\zeta}_j)_{j=1}^{n_{q,d}}$, by an affine diffeomorphism $\Phi_T \colon T \to T_{\mathrm{ref}}$, i.e., for all $j = 1, \ldots , n_{q,d}$ there is a unique $z \in \DD_\ell^q(T)$ such that $\widetilde{\zeta}_j = \zeta_{\ell,z}^q|_T \circ \Phi_T$.

From now on, we assume that the polynomial degree $q$ is fixed. To ease notation, we will write $\DD_\ell \coloneqq \DD_\ell^q$, $\zeta_{\ell,z} \coloneqq \zeta_{\ell,z}^q$, and $n_d = n_{q,d}$.

Clearly, it holds $\DD_\ell(\Gamma) = \DD_\ell \cap \Gamma$. Hence, every $f_\ell \in \SS^q(\TT_\ell^\Gamma)$ in the form
\begin{equation*}
 f_\ell = \sum_{z \in \DD_\ell(\Gamma)} \alpha_z \zeta_z|_{\Gamma}
\end{equation*}
can be naturally lifted to a function $\widehat{f}_\ell \in \SS^q(\TT_\ell)$ by setting
\begin{equation} \label{eq:natext}
 \widehat{f}_\ell \coloneqq \sum_{z \in \DD_\ell(\Gamma)} \alpha_z \zeta_z,
\end{equation}
and we note that the discrete lifting $\widehat{(\cdot)} \colon \SS^q(\TT_\ell^\Gamma) \to \SS^q(\TT_\ell)$ is a linear operator.
\subsection{Boundary integral equation}

\noindent For $-1/2 \leq s \leq 1/2$, we consider the single-layer integral operator $V \colon H^{s - 1/2}(\Gamma) \to H^{s + 1/2}(\Gamma)$ defined by
\begin{equation} \label{eq:singlelayer}
    (V\psi)(x) \coloneqq \int_\Gamma G(x-y) \psi(y) \d y \quad \text{for all } x \in \Gamma,
\end{equation}
where $G$ is the fundamental solution of the Laplace equation; see~\eqref{eq:fundamental}. The operator $V \colon H^{s - 1/2}(\Gamma) \rightarrow H^{s + 1/2}(\Gamma)$ is a bounded isomorphism for $-1/2 \leq s \leq 1/2$ and elliptic for $s = 0$, provided that $\mathrm{diam}(\Omega) < 1$ if $d=2$, which can always be achieved by scaling; see, e.g.,~\cite{Mclean2000}. Hence, for every $g \in H^{1/2}(\Gamma)$ there is a unique solution $\phi^\exact \in H^{-1/2}(\Gamma)$ of the problem
\begin{equation*}
    \dual{V \phi^\exact}{\psi}_{\Gamma} = \dual{g}{\psi}_{\Gamma} \quad \text{for all } \psi \in H^{-1/2}(\Gamma),
\end{equation*}
which satisfies $\norm{\phi^\exact}_{H^{-1/2}(\Gamma)} \simeq \norm{g}_{H^{1/2}(\Gamma)}$, where the hidden equivalence constants depend only on the operator norm of $V$ and its inverse and hence only on $
\Omega$. Moreover, the single-layer operator $\widetilde{V} \colon H^{-1/2}(\Gamma) \to H^1(\Omega)$ with the same integral representation as in~\eqref{eq:singlelayer} is a continuous linear operator satisfying $-\Delta(\widetilde{V}\phi) = 0$ for all $\phi \in H^{-1/2}(\Gamma)$.\\ \\

\subsection{Galerkin BEM}

Given $p \in \N_0$, we seek $\phi_\ell^\exact \in \PP^p(\TT_\ell^\Gamma)$ such that 
\begin{equation} \label{eq:weakdisc}
    \dual{V\phi_\ell^\exact}{\psi_\ell}_{\Gamma} = \dual{g}{\psi_\ell}_{\Gamma} \quad \text{for all } \psi \in \PP^p(\TT_\ell^\Gamma).
\end{equation}
By ellipticity, the Lax--Milgram lemma yields existence and uniqueness of the solution $\phi_\ell^\exact$ of~\eqref{eq:weakdisc} and there holds the Céa lemma
\begin{equation} \label{eq:cea}
	\norm{\phi^\exact - \phi_\ell^\exact}_{H^{-1/2}(\Gamma)} \lesssim \min_{\psi_\ell \in \PP^p(\TT_\ell^\Gamma)} \norm{\phi^\exact - \psi_\ell}_{H^{-1/2}(\Gamma)},
\end{equation}
where the hidden constant depends only on ellipticity and continuity of $V$ and hence only on $\Omega$.

\section{Functional error estimation and main result}
\label{section:mainresult}

\noindent
\subsection{General notation}

Throughout this section, we adhere to the following notation: 
Let $\Omega_0 \subseteq \Omega$ be an initial strip domain along the boundary, i.e., $\Omega_0$ is a bounded polyhedral Lipschitz domain with $\Gamma = \partial\Omega \subseteq \partial\Omega_0$.
Let $\TT_\ell$ be a conforming triangulation of $\Omega_0$ into simplices.
Let $\TT_\ell^\Gamma \coloneqq \TT_\ell|_\Gamma$ be the induced conforming triangulation of the boundary $\Gamma = \partial\Omega$ into surface simplices defined in~\eqref{eq:boundarymesh}. For fixed $k \in \N$, let $\TT_\ell^\omega \coloneqq \TT_\ell^{[k]}[\Gamma] \subseteq \TT_\ell$ be the $k$-patch of $\Gamma$, e.g., for $k = 2$, the patch $\TT_\ell^{[2]}[\Gamma]$ consists of all volume simplices $T \in \TT_\ell$ that touch an element $T' \in \TT_\ell$ with $T' \cap \Gamma \neq \emptyset$; see~\eqref{eq:patch} for the formal definition.
Let $\omega_\ell \coloneqq {\rm interior}\big( \bigcup \TT_\ell^\omega \big)$ denote the induced strip domain along $\Gamma$ and $\Gamma_\ell^\complement \coloneqq \partial \omega_\ell \backslash \Gamma$ be the complementary boundary part, i.e., $\partial\omega_\ell = \Gamma \cup \Gamma_\ell^\complement$. By definition of $\omega_\ell$, we note that $\Gamma_\ell^\complement$ is indeed closed and well separated from $\Gamma$ in the sense that $\dist(\Gamma, \Gamma_\ell^\complement) > 0$. We refer to Figure~\ref{fig:strip} for an illustration.

\begin{figure}[!ht]
   \includegraphics[scale=0.5]{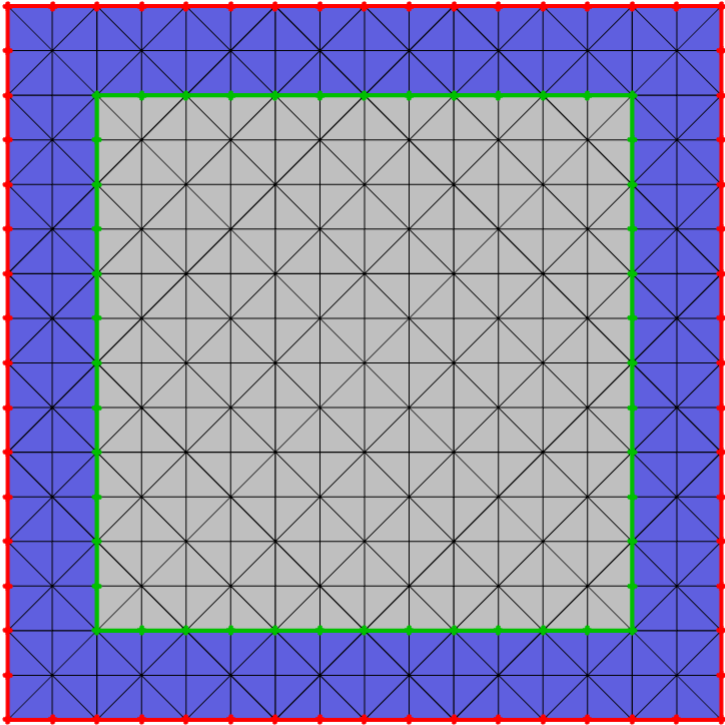}

   \caption{Boundary strip $\omega_\ell$ ({\color{blue} blue}) of strip depth $k=2$ of a square domain. The outer boundary $\Gamma$ is depicted in {\color{red} red}, the complementary boundary $\Gamma_\ell^\complement$ in {\color{green} green}.}

    \label{fig:strip}

\end{figure}
In the remainder of this section, we recall the ideas of~\cite{Kurz2020} for functional \textsl{a~posteriori} error estimation and formulate our main result. To this end, let $p\in \N_0$ und $q \in \N$ be polynomial degrees for the BEM as well as the auxiliary problems, respectively.

\subsection{Functional \textsl{a~posteriori} error estimation (upper bound)}

Suppose additional regularity $g \in H^1(\Gamma)$ for the inhomogeneous Dirichlet data in~\eqref{eq:poisson} (instead of only $g \in H^{1/2}(\Gamma)$).
Note that this leads to additional regularity $\phi^\exact \in L^2(\Gamma)$ of the solution to~\eqref{eq:bie} (instead of only $\phi^\exact \in H^{-1/2}(\Gamma)$).
Let $\phi_\ell \in \PP^p(\TT_\ell^\Gamma)$ be an approximation of $\phi^\exact$.
Since $\Delta u_\ell = 0$ for $u_\ell \coloneqq \widetilde{V} \phi_\ell$, the Dirichlet principle yields
\begin{equation*}
 \norm{\nabla (u^\exact - u_\ell)}_{L^2(\Omega)} 
 \leq \norm{\nabla w}_{L^2(\Omega)} 
 \quad \text{for all } w \in H^1(\Omega) 
 \text{ with } w|_\Gamma = (u^\exact-u_\ell)|_\Gamma
 = g - u_\ell|_\Gamma.
\end{equation*}
%
%
Since discrete functions cannot satisfy continuous Dirichlet conditions, let $J_\ell \colon H^{1/2}(\Gamma) \rightarrow \SS^q(\TT_\ell^{\Gamma})$ be an $H^1(\Gamma)$-stable projection, i.e., there is a constant $\const{C}{stab}$ such that $\norm{J_\ell \psi}_{H^1(\Gamma)} \leq \const{C}{stab} \norm{\psi}_{H^1(\Gamma)}$ for all $\psi \in H^1(\Gamma)$.
Then, the discrete auxiliary problem reads:
Find $w_\ell \in \SS^q(\TT_\ell^\omega)$ such that
\begin{equation} \label{eq:defwh}
 \begin{split}
  \dual{\nabla w_\ell}{\nabla v_\ell}_{L^2(\omega_\ell)} 
  &= 
  0 
  \quad 
  \text{for all } v_\ell \in \SS^q_{\partial \omega_\ell}(\TT_\ell^\omega), \\
  w_\ell|_{\Gamma} 
  &= 
  g_\ell 
  \coloneqq 
  J_\ell(g - V\phi_\ell), \\ 
  w_\ell|_{\Gamma_\ell^\complement} 
  &= 
  0.
 \end{split}
\end{equation}
It is well-known that~\eqref{eq:defwh} admits a unique solution $w_\ell \in \SS^q(\TT_\ell^\omega)$; see, e.g.,~\cite{Bartels2004,afkpp2013,Feischl2014}.
According to the homogeneous Dirichlet condition on $\Gamma_\ell^\complement$, we can extend $w_\ell$ by zero to $w_\ell \in H^1(\Omega)$. Therefore,~\cite[Theorem~5]{Kurz2020} proves
\begin{align}\label{eq:upper-bound}
 \norm{\nabla (u^\exact - u_\ell)}_{L^2(\Omega)} 
 \le 
 \norm{\nabla w_\ell}_{L^2(\Omega)} + \norm{(1-J_\ell)(g - V\phi_\ell)}_{H^{1/2}(\Gamma)}.
\end{align}
By assumption on $g$ and the mapping properties of $V$, there holds $g - V\phi_\ell \in H^1(\Gamma)$. Moreover, the assumptions on the projection $J_\ell$ prove the Poincar\'e-type estimate
\begin{equation}\label{eq:rel}
 \const{C}{osc}^{-1} \, \norm{(1-J_\ell)(g - V\phi_\ell)}_{H^{1/2}(\Gamma)} 
 \leq 
 \norm{h_\ell^{1/2} \nabla_\Gamma (1-J_\ell)(g - V\phi_\ell)}_{L^2(\Gamma)}
 \eqqcolon \osc_\ell,
\end{equation}
where the constant $\const{C}{osc}$ depends only on $\const{C}{stab}$, $\Gamma$, and $\kappa$-shape regularity of $\TT_\ell$; see~\cite[Theorem~4]{kop2013} for $q = 1$ and~\cite[Lemma~7]{Aurada2015} for $q \ge 1$. Combining~\eqref{eq:upper-bound}--\eqref{eq:rel}, we prove
\begin{equation} \label{eq:compupperbound}
 \norm{\nabla (u^\exact - u_\ell)}_{L^2(\Omega)} 
 \leq
 \norm{\nabla w_\ell}_{L^2(\Omega)} + \const{C}{osc} \, \osc_\ell 
 \eqqcolon 
 \eta_\ell + \const{C}{osc} \, \osc_\ell
\end{equation}
which is the computable upper bound of~\eqref{eq:estimation} with known constant $1$ up to data oscillations.

\begin{remark} \label{rem:whjustification}
The auxiliary problem~\eqref{eq:defwh} is motivated by the fact that, for $\omega_\ell = \Omega$ and $\Gamma_\ell^\complement = \emptyset$, the discrete function $w_\ell$ is the unique FEM approximation of the potential error, i.e., $w_\ell \approx w \coloneqq u^\exact - u_\ell$. As noted above, the homogeneous Dirichlet condition on $\Gamma_\ell^\complement$ is chosen to allow for the conforming extension of $w_\ell$ from the strip domain $\omega_\ell$ to the whole of $\Omega$.
\end{remark}

\subsection{Functional \textsl{a~posteriori} error estimation (lower bound)}

In order to obtain a computable lower bound, we aim to find a solution pair $(\ttau_\ell, p_\ell) \in \RR^{q-1}_{\Gamma_\ell^\complement}(\TT_\ell^\omega) \times \PP^{q-1}(\TT_\ell^\omega)$ such that 
\begin{equation} \label{eq:mixed}
	\begin{split}
		\dual{\ttau_\ell}{\ssigma_\ell}_{L^2(\omega_\ell)} + \dual{\div \ssigma_\ell}{p_\ell}_{L^2(\omega_\ell)} &= \dual{g - V\phi_\ell}{\ssigma_\ell \cdot \nn}_{L^2(\Gamma)} \\
		\dual{\div \ttau_\ell}{q_\ell}_{L^2(\omega_\ell)} &= 0
	\end{split}
\end{equation}
for all $(\ssigma_\ell, q_\ell) \in \RR^{q-1}_{\Gamma_\ell^\complement}(\TT_\ell^\omega) \times \PP^{q-1}(\TT_\ell^\omega)$. It is well-known that this problem has a unique solution $(\ttau_\ell, p_\ell) \in \RR^{q-1}_{\Gamma_\ell^\complement}(\TT_\ell^\omega) \times \PP^{q-1}(\TT_\ell^\omega)$; see, e.g.,~\cite{Bahriawati2005}. 
According to the homogeneous boundary condition $\ttau_\ell \cdot \nn = 0$ on $\Gamma_\ell^\complement$ included in the definition of $\RR^{q-1}_{\Gamma_\ell^\complement}(\TT_\ell^\omega)$, we can extend $\ttau_\ell$ by zero to $\ttau_\ell \in \boldsymbol{H}(\div, \Omega)$.
From~\cite[Theorem~5]{Kurz2020}, we then have that
\begin{equation*}
	2\dual{g - V\phi_\ell}{\ttau_\ell \cdot \nn}_{L^2(\Gamma)} - \norm{\ttau_\ell}_{L^2(\omega_\ell)}^2 \leq \norm{\nabla (u^\exact - u_\ell)}_{L^2(\Omega)}^2.
\end{equation*}
Plugging $(\ssigma_\ell, q_\ell) = (\ttau_\ell, p_\ell)$ into~\eqref{eq:mixed}, we obtain
\begin{equation*}
	\begin{split}
		\norm{\ttau_\ell}_{L^2(\omega_\ell)}^2 + \dual{\div \ttau_\ell}{p_\ell}_{L^2(\omega_\ell)} &= \dual{g - V\phi_\ell}{\ttau_\ell \cdot \nn}_{L^2(\Gamma)} \\
		\dual{\div \ttau_\ell}{p_\ell}_{L^2(\omega_\ell)} &= 0
	\end{split}
\end{equation*}
%
%
%
and hence
\begin{equation} \label{eq:lower-bound}
 \tau_\ell^2 \coloneqq \norm{\ttau_\ell}_{L^2(\omega_\ell)}^2 
 = 
 2\dual{g - V\phi_\ell}{\ttau_\ell \cdot \nn}_{L^2(\Gamma)} - \norm{\ttau_\ell}_{L^2(\omega_\ell)}^2 
 \leq 
 \norm{\nabla (u^\exact - u_\ell)}_{L^2(\Omega)}^2.
\end{equation}
Altogether, we proved the computable lower bound of~\eqref{eq:estimation} with known constant $1$.

\begin{remark} \label{rem:}
The auxiliary problem~\eqref{eq:mixed} is motivated by the fact that, for $\omega_\ell = \Omega$ and $\Gamma_\ell^\complement = \emptyset$, the discrete function $\ttau_\ell$ is the unique mixed FEM approximation of the potential error, i.e., $\ttau_\ell \approx \ttau \coloneqq \nabla(u^\exact - u_\ell)$. As noted above, the homogeneous boundary condition on $\Gamma_\ell^\complement$ is chosen to allow for the conforming extension of $\ttau_\ell$ from the strip domain $\omega_\ell$ to the whole of $\Omega$.
\end{remark}

\subsection{Galerkin BEM, adaptive algorithm, and main result}

From now on, we restrict to Galerkin BEM. Given the triangulation $\TT_\ell^\Gamma$ of $\Gamma$, let $\phi_\ell^\exact \in \PP^p(\TT_\ell^\Gamma)$ be the unique solution to~\eqref{eq:weakdisc}. 
The adaptive algorithm is driven by the computable upper bound from~\eqref{eq:estimation}.
To this end, let $w_\ell^\exact = w_\ell \in \SS^q(\TT_\ell^\omega)$ be the unique solution to~\eqref{eq:defwh} for $\phi_\ell = \phi_\ell^\exact$, where we employ the Scott--Zhang projector $J_\ell \colon L^2(\Gamma) \to \SS^q(\TT_\ell^\Gamma)$ on the boundary as considered, e.g., in~\cite{Feischl2014} for $q = 1$ and~\cite{Aurada2015} for $q \ge 1$; see Section~\ref{section:proof-mainresult} below for the definition.
With this, we define the $\TT_\ell$-elementwise contributions 
\begin{equation}\label{eq:indicators}
 \begin{split}
  \eta_\ell(T) 
  &\coloneqq 
  \norm{\nabla w_\ell^\exact}_{L^2(T)}, \\
  \osc_\ell(T) 
  &\coloneqq  
  \norm{h_\ell^{1/2} \nabla_\Gamma (1-J_\ell)(g - V\phi_\ell^\exact)}_{L^2(\partial T \cap \Gamma)}
\quad \text{for all } T \in \TT_\ell.
 \end{split}
\end{equation}
Based on these indicators and $\sum_{T \in \TT_\ell^\omega} \eta_\ell(T)^2 = \norm{\nabla w_\ell^\exact}_{L^2(\omega_\ell)}^2$ and $\sum_{T \in \TT_\ell^\omega} \osc_\ell(T)^2 = \osc_\ell^2$, the following adaptive algorithm has essentially been proposed by~\cite{Kurz2020}, where we note that $\eta_\ell(T) = 0 = \osc_\ell(T)$ for all $T \in \TT_\ell \backslash \TT_\ell^\omega$.

\begin{algorithm}[Adaptive algorithm]\label{algo:adap}
\textbf{Input:}
Initial strip domain $\Omega_0 \subseteq \Omega$ with $\Gamma = \partial\Omega \subseteq \partial\Omega_0$ and conforming triangulation $\TT_0$ of $\Omega_0$, 
polynomial degrees $p \in \N_0$ for the BEM in~\eqref{eq:weakdisc} and $q \in \N$ for the estimator in~\eqref{eq:compupperbound},
patch order $k \in \N$,
marking function $M \colon \R_{\ge0} \to \R_{\ge0}$ that is continuous at $0 = M(0)$,
and right-hand side $g \in H^1(\Gamma)$.
\\
\textbf{Loop:} For all $\ell = 0,1,2,\dots$, repeat the following steps:
\begin{enumerate}[label = \rm(\roman*)]
\item Extract the boundary mesh $\TT_{\ell}^{\Gamma} = \TT_\ell|_\Gamma$ from $\TT_{\ell}$ and solve~\eqref{eq:weakdisc} to obtain the Galerkin BEM solution $\phi_{\ell}^\exact \in \PP^p(\TT_{\ell}^{\Gamma})$.
\item Extract the boundary strip $\omega_\ell \coloneqq {\rm interior}\big( \bigcup \TT_\ell^\omega \big)$ and the associated triangulation $\TT_{\ell}^{\omega} \coloneqq \TT_\ell^{[k]}[\Gamma]$,
and compute the auxiliary FEM solution $w_\ell^\exact \in \SS^q(\TT_\ell^\omega)$ by solving~\eqref{eq:defwh} with $\phi_\ell = \phi_\ell^\exact$ and $J_\ell$ being the Scott--Zhang projector.
\item Compute the error indicators $\eta_\ell(T)$ and $\osc_\ell(T)$ from~\eqref{eq:indicators} for all $T \in \TT_\ell^\omega$.
\item Determine a set $\MM_\ell \subseteq \TT_{\ell}^{\omega}$ (usually of minimal cardinality) such that
\begin{align}\label{eq:marking}
 \max_{T \in \TT_{\ell}^{\omega} \setminus \MM_{\ell}} \big[ \eta_\ell(T)^2 + \osc_\ell(T)^2 \big] 
 \leq 
 M\Bigl(\max_{T \in \MM_{\ell}} \big[ \eta_\ell(T)^2 + \osc_\ell(T)^2 \big]\Bigr).
\end{align}
\item Generate a new triangulation $\TT_{\ell+1}$ by use of newest vertex bisection such that at least all elements in $\MM_{\ell}$ are refined, i.e., $\MM_\ell \subseteq \TT_\ell \backslash \TT_{\ell+1}$.
\end{enumerate}
\end{algorithm}

\begin{remark}
The marking criterion~\eqref{eq:marking} is proposed and analyzed in the context of adaptive finite element methods in~\cite{Siebert2011}. Clearly, it is satisfied with $M(t) = t$ if 
\begin{align*}
 \max_{T \in \TT_{\ell}^{\omega}} \big[ \eta_\ell(T)^2 + \osc_\ell(T)^2 \big] 
 =  \max_{T \in \MM_\ell} \big[ \eta_\ell(T)^2 + \osc_\ell(T)^2 \big],
\end{align*}
i.e., $\MM_\ell$ contains one element with largest overall indicator, which can easily be enforced algorithmically at linear cost.
Instead, the work~\cite{Kurz2020} uses the D\"orfler marking criterion from~\cite{doerfler1996} to select, for some fixed $0 < \theta \le 1$, the set $\MM_\ell \subseteq \TT_{\ell}^{\omega}$ via
\begin{align}\label{eq:doerfler}
 \theta \sum_{T \in \TT_\ell^\omega} \big[ \eta_\ell(T)^2 + \osc_\ell(T)^2 \big]
 \le \sum_{T \in \MM_\ell} \big[ \eta_\ell(T)^2 + \osc_\ell(T)^2 \big].
\end{align}
Usually, $\MM_\ell$ is obtained by sorting of the overall indicators so that
\begin{align*}
 \max_{T \in \TT_{\ell}^{\omega} \backslash \MM_\ell} \big[ \eta_\ell(T)^2 + \osc_\ell(T)^2 \big] 
 \le \min_{T \in \MM_\ell} \big[ \eta_\ell(T)^2 + \osc_\ell(T)^2 \big]
\end{align*}
and hence~\eqref{eq:marking} is satisfied with $M(t) = t$. If $\MM_\ell$ in~\eqref{eq:doerfler} is obtained by inexact binsort as proposed in~\cite{stevenson2007}, then
\begin{align*}
 \max_{T \in \TT_{\ell}^{\omega} \backslash \MM_\ell} \big[ \eta_\ell(T)^2 + \osc_\ell(T)^2 \big] 
 \le 2  \min_{T \in \MM_\ell} \big[ \eta_\ell(T)^2 + \osc_\ell(T)^2 \big]
\end{align*}
and hence\eqref{eq:marking} is satisfied with $M(t) = 2t$.
\end{remark}

\begin{remark}
Algorithm~\ref{algo:adap} could also include the computation of the estimator $\tau_{\ell}$ from \eqref{eq:lower-bound} in order to determine the guaranteed lower bound~\eqref{eq:estimation}. However, our main result of Theorem~\ref{theorem:mainresult} exploits only the upper bound \eqref{eq:compupperbound} of \eqref{eq:estimation}.
\end{remark}

The following plain convergence theorem is the main result of this work. The subsequent sections will be concerned with its proof.

\begin{theorem}[Efficiency and plain convergence of functional upper bound]\label{theorem:mainresult}
Under the foregoing assumptions, Algorithm~\ref{algo:adap} guarantees the existence of a constant $\const{C}{eff} > 0$ such that
\begin{equation}\label{eq:theorem:mainresult}
 \begin{split}
  \const{C}{eff}^{-1}\eta_\ell 
  \leq 
  \norm{\phi^\exact - \phi_\ell^\exact}_{H^{-1/2}(\Gamma)} 
  \leq 
  \const{C}{V} \norm{u^\exact - u_\ell^\exact}_{H^1(\Omega)}
 \end{split}
\end{equation}
as well as plain convergence in the sense of
\begin{equation} \label{eq:plainconv}
 \const{C}{P}^{-1} \norm{u^\exact - u_\ell^{\exact}}_{H^1(\Omega)} 
 \leq 
 \norm{\nabla(u^\exact - u_\ell^\exact)}_{L^2(\Omega)} 
 \eqreff{eq:compupperbound}{\leq} 
 \eta_\ell +  \const{C}{osc}\mathrm{osc}_\ell
 \xrightarrow{\ell \to \infty} 
 0,
\end{equation}
where $\const{C}{P} > 0$ is a Poincaré constant and $C_V > 0$ is the operator norm of $\widetilde V \colon H^{-1/2}(\Gamma) \to H^1(\Omega)$, which both depend only on $\Omega$.
\end{theorem}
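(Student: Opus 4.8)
The plan is to establish the four inequalities of~\eqref{eq:theorem:mainresult}--\eqref{eq:plainconv} one after another, treating the convergence $\eta_\ell + \const{C}{osc}\osc_\ell \to 0$ last, as this is the substantial part.

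\textbf{The three ``algebraic'' bounds.} First I would observe that testing the Galerkin formulation~\eqref{eq:weakdisc} and the weak form of~\eqref{eq:bie} with the constant function $1 \in \PP^0(\TT_\ell^\Gamma) \subseteq \PP^p(\TT_\ell^\Gamma)$ and subtracting yields $\dual{(u^\exact - u_\ell^\exact)|_\Gamma}{1}_\Gamma = \dual{V(\phi^\exact - \phi_\ell^\exact)}{1}_\Gamma = 0$. Hence $u^\exact - u_\ell^\exact$ lies in the closed subspace $\{v \in H^1(\Omega) \,:\, \dual{v|_{\Gamma}}{1}_{\Gamma} = 0\}$, on which a compactness (Poincaré--Friedrichs) argument shows that $\norm{\nabla\cdot}_{L^2(\Omega)}$ is a norm equivalent to $\norm{\cdot}_{H^1(\Omega)}$; this is the leftmost inequality in~\eqref{eq:plainconv}, with an $\Omega$-dependent constant $\const{C}{P}$. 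For the rightmost inequality in~\eqref{eq:theorem:mainresult}, I would take the trace of $u^\exact - u_\ell^\exact = \widetilde V(\phi^\exact - \phi_\ell^\exact)$ to obtain $(u^\exact - u_\ell^\exact)|_\Gamma = V(\phi^\exact - \phi_\ell^\exact)$ and use that $V \colon H^{-1/2}(\Gamma) \to H^{1/2}(\Gamma)$ is an isomorphism together with boundedness of the trace $H^1(\Omega) \to H^{1/2}(\Gamma)$. Finally, for the efficiency bound $\const{C}{eff}^{-1}\eta_\ell \le \norm{\phi^\exact - \phi_\ell^\exact}_{H^{-1/2}(\Gamma)}$, I would use that, by~\eqref{eq:defwh}, $w_\ell^\exact$ minimizes $\norm{\nabla\cdot}_{L^2(\omega_\ell)}$ over all functions in $\SS^q(\TT_\ell^\omega)$ with boundary values $g_\ell$ on $\Gamma$ and $0$ on $\Gamma_\ell^\complement$; comparing with a cheap discrete competitor supported on the layer of elements touching $\Gamma$ (e.g.\ the natural lifting $\widehat{g_\ell}$ from~\eqref{eq:natext}), a scaling estimate gives $\eta_\ell \lesssim \norm{h_\ell^{-1/2}g_\ell}_{L^2(\Gamma)}$. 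Since Galerkin orthogonality yields $Q_\ell^\Gamma\psi_\ell = 0$ for $\psi_\ell \coloneqq g - V\phi_\ell^\exact = V(\phi^\exact - \phi_\ell^\exact)$, the elementwise estimate~\eqref{eq:avgprop} with $s = 1/2$ together with the local Scott--Zhang approximation property gives $\norm{h_\ell^{-1/2}g_\ell}_{L^2(\Gamma)} \lesssim \abs{\psi_\ell}_{H^{1/2}(\Gamma)} \le \norm{V(\phi^\exact - \phi_\ell^\exact)}_{H^{1/2}(\Gamma)} \lesssim \norm{\phi^\exact - \phi_\ell^\exact}_{H^{-1/2}(\Gamma)}$.

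\textbf{A priori convergence and the estimator.} Since the meshes are nested, the BEM spaces $\PP^p(\TT_\ell^\Gamma)$ are nested, and since Galerkin BEM is the $V$-orthogonal projection onto them, the classical argument for projection methods gives $\phi_\ell^\exact \to \phi_\infty^\exact$ in $H^{-1/2}(\Gamma)$, where $\phi_\infty^\exact$ is the $V$-orthogonal projection of $\phi^\exact$ onto $\mathcal{V}_\infty \coloneqq \overline{\bigcup_{\ell} \PP^p(\TT_\ell^\Gamma)}^{H^{-1/2}(\Gamma)}$; hence $u_\ell^\exact = \widetilde V\phi_\ell^\exact \to u_\infty^\exact \coloneqq \widetilde V\phi_\infty^\exact$ in $H^1(\Omega)$ and $V\phi_\ell^\exact \to V\phi_\infty^\exact$ in $H^{1/2}(\Gamma)$, and by the efficiency bound just shown $\eta_\ell$ and the zero-extensions of $w_\ell^\exact$ stay bounded. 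By the upper bound~\eqref{eq:compupperbound} it then remains to prove $\eta_\ell + \const{C}{osc}\osc_\ell \to 0$, equivalently $\phi_\infty^\exact = \phi^\exact$. For this I would adapt the plain-convergence strategy for adaptive FEM without a discrete lower bound~\cite{Siebert2011}: introduce the pointwise monotone limit mesh-size $h_\infty \coloneqq \lim_\ell h_\ell$ and split $\overline\Omega$ into $\{h_\infty > 0\}$, where the triangulation and thus the local spaces $\SS^q(\TT_\ell^\omega)$ and oscillation weights eventually stabilize, and $\{h_\infty = 0\}$. On $\{h_\infty = 0\}$ the indicators $\eta_\ell(T)$ and $\osc_\ell(T)$ vanish in the limit because $\abs{T} \to 0$ and the quantities entering~\eqref{eq:indicators} converge in $L^2$ (absolute continuity~\eqref{eq:abscont}, property~\eqref{eq:avgprop}). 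On $\{h_\infty > 0\}$ the a priori convergence of the data $g_\ell = J_\ell\psi_\ell$ identifies the local limits of $w_\ell^\exact$ and of $(1-J_\ell)\psi_\ell$; the marking criterion~\eqref{eq:marking}, valid for all $T \notin \MM_\ell$ with $M$ continuous at $0 = M(0)$, then forces these local limits, and hence the indicators there, to vanish, since any persistently large indicator would be marked, refined, and pushed into $\{h_\infty = 0\}$. A finite-overlap argument upgrades this to $\eta_\ell^2 + \osc_\ell^2 = \sum_{T\in\TT_\ell^\omega}\bigl[\eta_\ell(T)^2 + \osc_\ell(T)^2\bigr] \to 0$, which together with~\eqref{eq:compupperbound} and the Poincaré inequality above gives~\eqref{eq:plainconv}.

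\textbf{Main obstacle.} I expect the last step to be the genuine difficulty: $\eta_\ell$ is not a sum of purely local residuals but the Dirichlet energy of the auxiliary discrete solution $w_\ell^\exact$ posed on the \emph{moving, shrinking} strip $\omega_\ell$ with its own $\ell$-dependent finite element space, so neither a discrete local lower bound nor an estimator reduction in the style of residual estimators is available. One must construct limiting objects both for the boundary density (the a priori limit $\phi_\infty^\exact$) and for the auxiliary problem on the limiting strip, establish $H^1(\Omega)$-convergence of the zero-extensions of $w_\ell^\exact$ despite the change of domain, and match the resulting limit with the limiting boundary residual $g - V\phi_\infty^\exact$ on the stationary part of $\Gamma$; this interplay, absent in the standard theory, is where the new ideas are needed.
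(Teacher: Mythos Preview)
Your treatment of the three ``algebraic'' bounds and of efficiency is correct and coincides with the paper's: the Poincar\'e argument via $\int_\Gamma(u^\exact-u_\ell^\exact)=0$, the trace/isomorphism argument for the upper bound in~\eqref{eq:theorem:mainresult}, and the efficiency proof via the natural lifting $\widehat{g_\ell}$ together with $Q_\ell^\Gamma(g-V\phi_\ell^\exact)=0$ are exactly Proposition~\ref{prop:efficiency} and Proposition~\ref{prop:extbound} (case $r=2$). You also correctly identify that $H^1(\Omega)$-convergence of the zero-extended $w_\ell^\exact$ on the moving strips is the crux, and the paper indeed devotes Proposition~\ref{prop:auxconv} (relying on Theorem~\ref{thm:extconv}) to this.

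The gap is the step ``a finite-overlap argument upgrades this to $\sum_T[\eta_\ell(T)^2+\osc_\ell(T)^2]\to 0$.'' For $\osc_\ell$ this can be made to work because the explicit weight $h_\ell^{1/2}$ gives a reduction property on refined elements, and the paper does precisely this (Proposition~\ref{thm:eta2conv}). For $\eta_\ell(T)=\|\nabla w_\ell^\exact\|_{L^2(T)}$, however, there is no mesh-size weight: the sum over the refined region $\{h_\infty=0\}$ is $\|\nabla w_\ell^\exact\|_{L^2(\{h_\infty=0\})}^2$, which converges to $\|\nabla w_\infty^\exact\|_{L^2(\{h_\infty=0\})}^2$ and has no reason to vanish unless one already knows $\phi_\infty^\exact=\phi^\exact$. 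Vanishing of the \emph{individual} indicators (by $|T|\to 0$ and absolute continuity) says nothing about their sum over unboundedly many elements, and no overlap argument rescues this. The paper breaks the circularity by an indirect route: it proves only $\osc_\ell\to 0$ and the \emph{local} fact $\eta_\ell(\TT_{\ell'}\cap\TT_\infty)\to 0$, then introduces a strong-reliability estimate (Lemma~\ref{lem:strongrel})
\[
|\langle V(\phi^\exact-\phi_\ell^\exact),\psi\rangle_\Gamma|
\;\lesssim\; \osc_\ell\,\|\psi\|_{L^2(\Gamma)}+\sum_T\eta_\ell(T)\,\|\psi\|_{L^2(\partial T\cap\Gamma)},
\]
and tests with $\psi-Q_\ell^\Gamma\psi$ for $\psi\in H^1(\Gamma)$ to gain an extra factor $h_T$ in the sum. \emph{This} $h$-weighted sum does go to zero on the refined region (because $h_\ell\to 0$ there while $\eta_\ell$ stays bounded by efficiency), giving weak convergence $\phi_\ell^\exact\rightharpoonup\phi^\exact$ and hence $\phi_\infty^\exact=\phi^\exact$ (Proposition~\ref{prop:convergence:norm}); only then does efficiency yield $\eta_\ell\to 0$. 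The missing idea in your sketch is precisely this detour through the BEM residual that trades the unweighted global $\eta_\ell$ for an $h$-weighted localized version.
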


\section{Proof of Theorem~\ref{theorem:mainresult} (Efficiency and plain convergence)}
\label{section:proof-mainresult}


\noindent
The proof of Theorem~\ref{theorem:mainresult} crucially hinges on \textsl{a~priori} convergence of the solutions to the auxiliary problems \eqref{eq:defwh}, which relies on a special construction of the Scott--Zhang projector $P_\ell \colon H^1(\Omega) \to \SS^q(\TT_\ell)$ from \cite{Scott1990}. We briefly present this construction as well as properties of $P_\ell$. Subsequently, we prove \textsl{a~priori} convergence of the auxiliary solutions and efficiency of the functional error estimator. The proof of Theorem~\ref{theorem:mainresult} is then concluded by combining these results.

\subsection{Scott--Zhang projector} \label{subsec:SZ}
Recall that $\DD_\ell = \DD_\ell(\overline{\Omega})$ denotes the Lagrange nodes of $\SS^q(\TT_\ell)$. In essence, the construction of the Scott--Zhang projector $P_\ell \colon H^1(\Omega) \to \SS^q(\TT_\ell)$ in \cite{Scott1990} is based on selecting $(K_{\ell,z})_{z \in \DD_\ell}$ with $z \in K_{\ell,z}$, where $K_{\ell,z}$ is either an element $K_{\ell,z} \in \TT_\ell$ or a facet $K_{\ell,z} \in \FF_\ell$ for all $z \in \DD_\ell$.Then, one chooses $\psi_{\ell,z} \in \P^q(K_{\ell,z})$ such that
\begin{equation*}
 \int_{K_{\ell,z}} \psi_{\ell,z} \zeta_{\ell,z'} \, \d{x} 
 = 
 \delta_{zz'} 
 \quad \text{for all } z \in \DD_\ell 
 \text{ and } z' \in \DD_\ell\cap K_{\ell,z}
\end{equation*}
to define the projector $P_\ell$ by
\begin{equation} \label{eq:SZdef}
  P_\ell w 
  = 
  \sum_{z \in \DD_\ell} \Bigl(\int_{K_{\ell,z}} \psi_{\ell,z} w \d x \Bigr) \zeta_{\ell,z}.
\end{equation}
The construction of $P_\ell$ thus provides considerable freedom in the choice of the sets $(K_{\ell,z})_{z \in \DD_\ell}$, which can be exploited to ensure certain additional properties of $P_\ell$.

\begin{lemma} \label{lem:SZconstruct}
 There exist sets $(K_{\ell,z})_{\ell \in \N_0,z \in \DD_\ell}$ such that the following properties hold.
 \begin{enumerate}[label = \rm(\roman*)]
  \item If $z \in \mathrm{interior}(T_z) \cap \DD_\ell$ for some $T_z \in \TT_\ell$, we set $K_{\ell,z} \coloneqq T_z$.
  \item If $z \in \mathrm{interior}(F_z) \cap \DD_\ell$ for some facet $F_z \in \FF_\ell$, we set $K_{\ell,z} \coloneqq F_z$.
  \item If $z \in \DD_\ell(\Gamma)$, then $K_{\ell,z} \subseteq \Gamma$.
  \item If $z \in \DD_\ell(\omega_{\ell})$, then $\mathrm{interior}(K_{\ell,z}) \subseteq \omega_{\ell}$.
  \item If $z \in \DD_\ell(\Omega \setminus \omega_{\ell})$, then $K_{\ell,z} \subseteq \Omega \setminus \omega_{\ell}$.
  \item If $z \in \DD_\ell \cap \DD_{\ell-1}$ and $\zeta_{\ell,z} = \zeta_{\ell-1,z}$, then $K_{\ell,z} = K_{\ell-1,z}$.
 \end{enumerate}
\end{lemma}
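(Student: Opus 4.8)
The plan is to construct the sets $(K_{\ell,z})$ simultaneously for all levels by an explicit inductive procedure on $\ell$, obeying a fixed priority ordering among the six requirements, and then check that the list is consistent. The guiding principle is that properties (iii)--(v) are \emph{locality constraints} (they restrict the admissible location of $K_{\ell,z}$ to a closed subset containing $z$: either $\Gamma$, or $\overline{\omega_\ell}$, or $\overline{\Omega}\setminus\omega_\ell$), property (vi) is a \emph{persistence constraint} (if locally nothing changed, reuse the old choice), and (i)--(ii) are \emph{canonical choices} that must be compatible with the locality constraints whenever they are invoked. So the first step is to verify this compatibility: for an interior Lagrange node $z$ of an element $T_z$, the whole element $T_z$ lies in exactly one of the regions $\omega_\ell$ or $\Omega\setminus\overline{\omega_\ell}$ (since $\omega_\ell$ is a union of elements), so choosing $K_{\ell,z}=T_z$ automatically satisfies (iv) or (v), and such a $z$ is never on $\Gamma$, so (iii) is vacuous. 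For a node $z$ interior to a facet $F_z$, if $F_z\subseteq\Gamma$ then $F_z$ witnesses (iii) and (iv) at once; if $F_z\not\subseteq\Gamma$ then $F_z$ is shared by two elements that are both in $\omega_\ell$ or both outside (a facet of $\partial\omega_\ell\setminus\Gamma$ can still be taken as $K_{\ell,z}$ from the $\omega_\ell$-side, so interior$(K_{\ell,z})\subseteq\omega_\ell$ fails only if we are careless — here one picks the facet but interprets ``interior $\subseteq\omega_\ell$'' as: there is an adjacent element in $\omega_\ell$; more cleanly, one replaces $K_{\ell,z}$ by one of the two adjacent \emph{elements} lying on the correct side). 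I would actually resolve this by \emph{always} downgrading facet-choices near $\partial\omega_\ell\setminus\Gamma$ to element-choices, which is permitted because (ii) is only asserted when $z$ lies in the interior of a facet and we are otherwise free.

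Second, I would set up the induction. For $\ell=0$ there are no persistence constraints, so for each $z\in\DD_0$ I choose $K_{0,z}$ by: if $z\in\Gamma$, pick a boundary facet or boundary element through $z$ (possible since $\Gamma$ is covered by $\TT_0^\Gamma$ and such a node always lies on at least one surface simplex, whose associated volume element gives a valid $K$); if $z$ is interior to an element or facet, use (i)/(ii) subject to the downgrade above; otherwise pick any element through $z$ that lies on the correct side of $\partial\omega_0$, which exists because $z\in\overline{\omega_0}$ forces an adjacent element in $\omega_0$ and $z\notin\overline{\omega_0}$ forces all adjacent elements outside. For the step $\ell-1\to\ell$: for every $z\in\DD_\ell\cap\DD_{\ell-1}$ with $\zeta_{\ell,z}=\zeta_{\ell-1,z}$, I am \emph{forced} to set $K_{\ell,z}=K_{\ell-1,z}$ by (vi); I must then check this old set is still admissible at level $\ell$. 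The equality of nodal basis functions means the patch $\Omega_{\ell-1}[z]=\Omega_\ell[z]$ is untouched by the refinement, so $K_{\ell-1,z}\in\TT_{\ell-1}\cap\TT_\ell$ (or a facet in $\FF_{\ell-1}\cap\FF_\ell$), it is still $\subseteq\Gamma$ if it was, and — the one genuinely delicate point — it still lies on the correct side of $\partial\omega_\ell$. For all remaining nodes $z$ (genuinely new nodes, or old nodes whose basis function changed) I make a fresh choice exactly as in the base case.

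The main obstacle, and the step I would spend the most care on, is exactly this: showing that $\omega_\ell$ can only \emph{grow} relative to $\omega_{\ell-1}$ on the part that matters, so that forced reuse of $K_{\ell-1,z}$ never violates the locality properties (iii)--(v). Concretely: if $z\in\DD_\ell(\omega_\ell)$ and $\zeta_{\ell,z}=\zeta_{\ell-1,z}$, I need interior$(K_{\ell-1,z})\subseteq\omega_\ell$; since $K_{\ell-1,z}$ is an element (after the downgrade) touching $z$ and $z\in\overline{\omega_\ell}$, and the local patch was unchanged, $K_{\ell-1,z}$ was already an element touching $z$ in the unchanged patch. The key geometric fact is that $\omega_\ell=\Omega_\ell^{[k]}[\Gamma]$ is \emph{monotone under refinement} in the sense that $\overline{\omega_\ell}\supseteq\overline{\omega_{\ell-1}}$ fails in general (refinement shrinks patches!) — so I must instead argue the \emph{reverse}: the strip $\omega_\ell\subseteq\omega_{\ell-1}$ can only shrink, hence an element $K_{\ell-1,z}\in\TT_\ell$ with interior in $\omega_{\ell-1}$ might \emph{not} be in $\omega_\ell$. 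This is the real crux. The resolution is that if $\zeta_{\ell,z}=\zeta_{\ell-1,z}$ then \emph{no element of the patch $\Omega_{\ell-1}[z]$ was refined}, in particular whether a patch-element belongs to the $k$-patch of $\Gamma$ is determined by combinatorial adjacency to $\Gamma$ which, for unrefined elements sitting in an unrefined neighborhood of $\Gamma$, is preserved; one has to spell out that $\Omega_\ell^{[k]}[\Gamma]$ and $\Omega_{\ell-1}^{[k]}[\Gamma]$ agree on the region of elements all of whose $k$-step neighborhoods are untouched — and $z$'s patch lies in that region precisely when $\zeta_{\ell,z}=\zeta_{\ell-1,z}$ \emph{and} $z$ is far enough (at least $k$ layers) from the refined region, which need not hold. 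So the honest fix is to strengthen (vi)'s hypothesis in usage: we reuse $K_{\ell-1,z}$ only when $K_{\ell-1,z}\subseteq\TT_\ell$ and still on the correct side, and otherwise re-choose; then we separately observe that (vi) as \emph{stated} only claims the implication under the hypothesis $\zeta_{\ell,z}=\zeta_{\ell-1,z}$, and in that case the relevant $k$-patch structure near $z$ \emph{is} preserved because the bisection mesh-closure forces the whole $k$-neighborhood of any unchanged basis function to be unchanged as well (this is the property that the support of $\zeta_{\ell,z}$ being unchanged propagates outward by one layer at each refinement). I would isolate this last combinatorial claim as the technical heart of the lemma and prove it from the newest-vertex-bisection mesh-closure estimate, then feed it into the induction to close all six cases.
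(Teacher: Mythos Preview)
Your approach contains a genuine gap: the combinatorial claim you isolate as the ``technical heart''---that $\zeta_{\ell,z}=\zeta_{\ell-1,z}$ forces the entire $k$-neighborhood of $z$ to be unrefined via newest-vertex-bisection mesh-closure---is false. The hypothesis $\zeta_{\ell,z}=\zeta_{\ell-1,z}$ pins down only the $1$-patch $\Omega_\ell[z]=\Omega_{\ell-1}[z]$; nothing prevents bisection of an element two or more hops away from $z$, which can alter the $k$-patch of $\Gamma$ near $z$ without touching $\mathrm{supp}(\zeta_{\ell,z})$. So the argument you propose for reconciling (vi) with (iv)--(v) does not close.

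More importantly, you miss the elementary observation that makes this entire line of reasoning unnecessary for property~(iv). Since $\omega_\ell$ is the \emph{interior} of a union of closed $\TT_\ell$-simplices, a node $z$ lies in the open set $\omega_\ell$ if and only if \emph{every} element of $\TT_\ell$ containing $z$ belongs to $\TT_\ell^\omega$; consequently every facet $F\in\FF_\ell$ with $z\in F$ has both adjacent elements in $\TT_\ell^\omega$ and hence $\mathrm{interior}(F)\subseteq\omega_\ell$. This is the paper's Step~1. It means that (iv) is satisfied by \emph{any} admissible choice of $K_{\ell,z}$ (element or facet of $\TT_\ell$ containing $z$) once $z\in\omega_\ell$, with no reference whatsoever to $\omega_{\ell-1}$ or to refinement history. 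In the inductive step, the hypothesis $\zeta_{\ell,z}=\zeta_{\ell-1,z}$ is used only to guarantee that $K_{\ell-1,z}$ is still an element or facet of $\TT_\ell$; its compatibility with $\omega_\ell$ then follows from Step~1 applied at level~$\ell$, not from any comparison across levels. For the same reason your ``downgrade'' of facet choices to element choices near $\Gamma_\ell^\complement$ is both unnecessary and in conflict with~(ii): the scenario you worry about---a node $z\in\omega_\ell$ lying on a facet of $\Gamma_\ell^\complement$---cannot occur, because $\Gamma_\ell^\complement\subseteq\partial\omega_\ell$ is disjoint from the open set $\omega_\ell$.
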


\begin{proof}
In the following, we prove that all the properties \rm (i)--(vi) can hold simultaneously. The proof is split into $3$ steps.

\textbf{Step~1 (Properties of \boldsymbol{$\omega_\ell$}).}
We first note that, due to the construction of $\omega_\ell$, whenever $z \in \DD_\ell \cap \omega_\ell$, there holds $\mathrm{interior}(F_z) \subset \omega_\ell$ for all facets $F_z \in \FF_\ell$ with $z \in F_z$. Additionally, if $z \in \mathrm{interior}(T_z)$ for some element $T_z \in \TT_\ell$, there holds $\mathrm{interior}(T_z) \subseteq \omega_\ell$. By a similar argument, an analogous statement holds for $\Omega \setminus \overline{\omega_\ell}$. Moreover, if $z \in \DD_\ell \cap \Gamma_\ell^\complement$, there exists a facet $F_z \in \FF_\ell$ with $z \in F_z \subset \Omega \setminus \omega_\ell$. Hence, whenever $z \in \DD_\ell(\Omega \setminus \omega_\ell)$, there exists either an element $T_z \in \TT_\ell$ with $z \in \mathrm{int}(T_z) \subset \Omega \setminus \overline{\omega_\ell}$ or a facet $F_z \in \FF_\ell$ with $z \in F_z \subset \Omega \setminus \omega_\ell$. We further note that the properties \rm (iii)--(v) are independent of each other since $\Gamma$, $\omega_\ell$, and $\Omega \setminus \omega_\ell$ are pairwise disjoint.

In the following, we will construct $(K_{\ell,z})_{\ell \in \N_0,z \in \DD_\ell}$ recursively starting with $\ell = 0$. 
 
 \textbf{Step~2 (Case \boldsymbol{$\ell = 0$}).} We first note that $\ell=0$ excludes \rm (vi). Let $z \in \mathrm{int}(T_z) \cap \DD_0$ for some $T_z \in \TT_0$ and define $K_{0,z} \coloneqq T_z$. Note that this excludes \rm (iii). Hence, it only remains to prove \rm{(iv)--(v)}. By Step~1, $z \in \omega_0$ implies $T_z \in \TT_0^\omega$ and hence $\mathrm{interior}(K_{0,z}) =  \mathrm{interior}(T_z) \subseteq \omega_0$, yielding \rm (iv). Similarly, Step~1 also establishes \rm (v).
 
 Let $z \in \mathrm{interior}(F_z) \cap \DD_0$ for some facet $F_z \in \FF_\ell$ and define $K_{0,z} \coloneqq F_z$. Arguing analogously to the case $z \in \mathrm{interior}(T_z) \cap \DD_0$, we show that \rm{(iv)--(v)} hold. It thus remains to prove \rm (iii), which is an easy consequence of the fact that $z \in \Gamma$ implies $K_{0,z} = F_z \subseteq \Gamma$.

 If $z \in \DD_0$ is neither contained in the interior of an element nor the interior of a facet, we have to choose a suitable facet such that \rm (iii)--(v) hold. If $z \in \Gamma$, there exists $F_z \in \TT_0^\Gamma$ with $z \in F_z$. Choosing $K_{0,z} \coloneqq F_z$ proves \rm (iii). Given $z \in \omega_0$ and the fact that $z$ is not contained in the interior of an element, Step~1 yields the existence of $F_z \eqqcolon K_{0,z}$ with $z \in F_z$ and $\mathrm{int}(F_z) \subset \omega_0$, which thus proves \rm (iv). Lastly, if $z \in \Omega \setminus \omega_0$, due to Step~1, there exists $F_z \eqqcolon K_{0,z}$ with $z \in F_z$ and $F_z \subseteq \Omega \setminus \omega_0$.

 \textbf{Step~3 (Induction on \boldsymbol{$\ell$}).}
 Suppose that $(K_{\ell-1,z})_{z \in \DD_{\ell-1}}$ is chosen such that \rm{(i)--(vi)} hold for $\ell-1$. We now choose $K_{\ell,z}$ such that \rm (i)--(vi) also hold for $\ell$.
 
Let $z \in \DD_\ell$. If either $z \notin \DD_{\ell-1}$ or $\zeta_{\ell,z} \neq \zeta_{\ell-1,z}$, \rm (vi) is excluded, and an analogous choice of $K_{\ell,z}$ as in the case $\ell=0$, guarantees \rm (i)--(v).

 Let $z \in \DD_\ell \cap \DD_{\ell -1}$ and $\zeta_{\ell,z} = \zeta_{\ell-1,z}$ and define $K_{\ell,z} \coloneqq K_{\ell-1,z}$. This validates \rm (vi). If $z \in \mathrm{interior}(T_z)$ for some element $T_z \in \TT_\ell$ or $z \in \mathrm{interior}(F_z)$ for some facet $F_z \in \FF_\ell$, $\zeta_{\ell,z} = \zeta_{\ell-1,z}$ implies that $K_{\ell,z} = K_{\ell-1,z} = T_z \in \TT_{\ell-1} \cap \TT_\ell$ or $K_{\ell,z} = K_{\ell-1,z} = F_z \in \FF_{\ell-1} \cap \FF_\ell$, respectively. This proves \rm (i)--(ii). If $z \in \Gamma$, the induction hypothesis yields $K_{\ell,z} = K_{\ell-1,z} \subset \Gamma$. This proves \rm (iii). If $z \in \omega_\ell$, Step~1 yields $K_{\ell,z} \subset \omega_\ell$. This proves \rm (iv). By analogous arguments, there holds \rm (v). This concludes the proof.
\end{proof}

From now on, we assume that the Scott--Zhang projector \eqref{eq:SZdef} is constructed with sets $(K_{\ell,z})_{z \in \DD_\ell}$ from Lemma~\ref{lem:SZconstruct}. 
We note that the Scott--Zhang projectors on boundaries $J_\ell \colon L^2(\Gamma) \to \SS^q(\TT_\ell^\Gamma)$ from \cite{Feischl2014, Aurada2015} are related to $P_\ell$ by the trace operator $(P_\ell v)|_\Gamma = J_\ell(v|_\Gamma)$ for all $v \in H^1(\Omega)$.
The following result states certain stability properties of the Scott--Zhang projector, which are immediate consequences of the considerations of, e.g., \cite{Scott1990,Ciarlet2013}. They rely only on the properties Lemma~\ref{lem:SZconstruct}\rm (i)--(iii).

\begin{lemma}[Stability of Scott--Zhang projectors] \label{lem:SZ}
  There exists a constant $\const{C}{stab}>0$ depending only on $\kappa$-shape regularity of $\TT_\ell$, $s$, and $t$  such that the following statements \rm (a)--(b) hold.
 \begin{enumerate}[label = \rm(\roman*)]
  \item[(a)] For given $1/2 < t \leq 1$, $0 < s \leq t$, $T \in \TT_\ell$, and $w \in H^t(\Omega_\ell[T])$, there holds
  \begin{equation} \label{eq:SZvolstab}
   \abs{P_\ell w}_{H^s(T)}
   \leq
   \const{C}{stab} h_T^{t-s} \abs{w}_{H^t(\Omega_\ell[T])}.
  \end{equation}
  In particular, $P_\ell w$ is the quasi-best approximation of $w$ in the sense of
  \begin{equation} \label{eq:SZvolbest}
    \abs{w - P_\ell w}_{H^s(T)}
    \leq
    \const{C}{stab} h_T^{t-s} \min_{w_\ell \in \SS^q(\TT_\ell)} \abs{w - w_\ell}_{H^t(\Omega_\ell[T])}.
  \end{equation}
  \item[(b)] For given $0 < s \leq t \leq 1$, $F \in \TT_\ell^\Gamma$ and $\varphi \in H^t(\Gamma_\ell[F])$, there holds
  \begin{equation} \label{eq:SZtracestab}
    \abs{J_\ell \varphi}_{H^s(F)}
    \leq
    \const{C}{stab} h_F^{t-s} \abs{\varphi}_{H^t(\Gamma_\ell[F])}.
  \end{equation}
  In particular, $J_\ell w$ is the quasi-best approximation of $w$ in the sense of
  \begin{equation} \label{eq:SZtracebest}
   \abs{\varphi - J_\ell \varphi}_{H^s(F)}
   \leq
   \const{C}{stab} h_F^{t-s} \min_{\varphi_\ell \in \SS^q(\TT_\ell^\Gamma)} \abs{\varphi - \varphi_\ell}_{H^t(\Gamma_\ell[F])}
  \end{equation}
  Additionally, for $\varphi \in L^2(\Gamma_\ell[F])$ there holds 
  \begin{equation} \label{eq:SZtraceL2}
   \norm{J_\ell \varphi}_{L^2(F)} \leq \const{C}{stab} \norm{\varphi}_{L^2(\Gamma_\ell[F])}
  \end{equation}
  and
  \begin{equation} \label{eq:SZtraceL2best}
    \norm{\varphi - J_\ell \varphi}_{L^2(F)} \leq \const{C}{stab} \min_{\varphi_\ell \in \SS^q(\TT_\ell^\Gamma)} \norm{\varphi - \varphi_\ell}_{L^2(\Gamma_\ell[F])}.
  \end{equation}
  \qed
 \end{enumerate}

\end{lemma}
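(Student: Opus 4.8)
The plan is to run the classical Scott--Zhang scaling argument (see \cite{Scott1990,Ciarlet2013}) on the patches $\Omega_\ell[T]$ and $\Gamma_\ell[F]$, invoking only Lemma~\ref{lem:SZconstruct}\rm{(i)--(iii)}, and to check at each step that the constants depend on the meshes $\TT_\ell$ only through their common $\kappa$-shape regularity (guaranteed by newest-vertex bisection). First I would record from \eqref{eq:SZdef} and the biorthogonality $\int_{K_{\ell,z}}\psi_{\ell,z}\zeta_{\ell,z'}\,\d x=\delta_{zz'}$ that $P_\ell$ is a projection onto $\SS^q(\TT_\ell)$: by Lemma~\ref{lem:SZconstruct}\rm{(i)--(iii)} each $K_{\ell,z}$ is a simplex of $\TT_\ell$ or a facet of one (and lies in $\Gamma$ whenever $z\in\Gamma$), so every $v_\ell\in\SS^q(\TT_\ell)$ restricts on $K_{\ell,z}$ to a polynomial in $\mathrm{span}\{\zeta_{\ell,z'}|_{K_{\ell,z}}\}_{z'\in\DD_\ell\cap K_{\ell,z}}$, whence $\int_{K_{\ell,z}}\psi_{\ell,z}v_\ell = v_\ell(z)$ and $P_\ell v_\ell=v_\ell$; in particular $P_\ell$ reproduces global polynomials of degree $\le q$, and via $(P_\ell v)|_\Gamma=J_\ell(v|_\Gamma)$ the same holds for $J_\ell$. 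Second, since $\zeta_{\ell,z}|_T\equiv0$ unless $z\in\DD_\ell(T)$ and then $K_{\ell,z}\subseteq\overline{\Omega_\ell[T]}$ (being a simplex or facet through a point of $T$), the value $(P_\ell w)|_T$ depends only on $w|_{\Omega_\ell[T]}$. Finally, $\kappa$-shape regularity makes each $\Omega_\ell[T]$ a union of a uniformly bounded number of simplices of diameter $\simeq h_T$, affinely equivalent to one of finitely many reference configurations $\hat T\subseteq\hat\omega$ with bi-Lipschitz constants controlled by $\kappa$; likewise for $\Gamma_\ell[F]$.

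\emph{Stability \eqref{eq:SZvolstab}, \eqref{eq:SZtracestab}, \eqref{eq:SZtraceL2}.} On a reference configuration, the Scott--Zhang functionals $\hat w\mapsto\int_{\hat K_{\ell,z}}\hat\psi_{\ell,z}\hat w$ are bounded on $H^t(\hat\omega)$: by Cauchy--Schwarz if $\hat K_{\ell,z}$ is a full-dimensional simplex, and by the trace inequality $H^t(\hat\omega)\hookrightarrow L^2(\hat K_{\ell,z})$ if $\hat K_{\ell,z}$ is a lower-dimensional facet --- the latter forces $t>1/2$, which is exactly the reason for that hypothesis in part~(a); in part~(b) the construction of Lemma~\ref{lem:SZconstruct}\rm{(ii)--(iii)} assigns to every boundary node a full $(d-1)$-dimensional surface element, so no trace is needed and the range $0<s\le t\le1$ (and $t=0$) is admissible. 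Since the target $\P^q(\hat T)$ is finite dimensional, its (semi)norms $|\cdot|_{H^s(\hat T)}$, $0\le s\le1$, are all equivalent to $\|\cdot\|_{L^2(\hat T)}$, hence $|P_\ell\hat w|_{H^s(\hat T)}\le C\|\hat w\|_{H^t(\hat\omega)}$; for $t=0$ this is already \eqref{eq:SZtraceL2}. For $t>0$ I would then exploit reproduction of constants to replace $\hat w$ by $\hat w-\bar w$ and apply the fractional Poincaré inequality $\|\hat w-\bar w\|_{H^t(\hat\omega)}\le C|\hat w|_{H^t(\hat\omega)}$; rescaling back to $T$, and using the homogeneity of the (semi)norms under dilation, produces the factor $h_T^{t-s}$ and finishes \eqref{eq:SZvolstab} and its boundary analogue \eqref{eq:SZtracestab}.

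\emph{Best approximation \eqref{eq:SZvolbest}, \eqref{eq:SZtracebest}, \eqref{eq:SZtraceL2best}.} For arbitrary $w_\ell\in\SS^q(\TT_\ell)$, the projection property yields $w-P_\ell w=(w-w_\ell)-P_\ell(w-w_\ell)$ on $T$, so $|w-P_\ell w|_{H^s(T)}\le|w-w_\ell|_{H^s(T)}+|P_\ell(w-w_\ell)|_{H^s(T)}$; the second term is bounded by the stability estimate just shown, and for the first I would use the elementary scaled estimate $|v|_{H^s(D)}\le Ch_D^{t-s}|v|_{H^t(D)}$ valid for $0<s\le t\le1$ on a shape-regular domain $D$ of diameter $h_D$ (immediate from monotonicity of the Gagliardo kernel on sets of diameter $\le h_D$ when $t<1$, and from Poincaré when $t=1$), applied with $D=T\subseteq\Omega_\ell[T]$. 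Taking the infimum over $w_\ell$ gives \eqref{eq:SZvolbest} and, on the boundary, \eqref{eq:SZtracebest}, while \eqref{eq:SZtraceL2best} drops out of the same splitting together with \eqref{eq:SZtraceL2} with no power of $h$ and no Poincaré involved.

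\emph{Main obstacle.} The argument is otherwise the classical one, and the only genuinely delicate points are the bookkeeping with the non-additive Gagliardo seminorms --- one must always estimate the seminorm on $T$ by the seminorm over the full patch $\Omega_\ell[T]$, using domain monotonicity in that direction, never the reverse --- and the trace inequality $H^t\hookrightarrow L^2(\mathrm{facet})$, whose breakdown at $t=1/2$ is precisely what confines part~(a) to $t>1/2$; the uniformity of $\const{C}{stab}$ in $\ell$ is nothing but $\kappa$-shape regularity of the newest-vertex-bisection meshes, and the remaining scaling computations are the routine ones of \cite{Scott1990,Ciarlet2013}.
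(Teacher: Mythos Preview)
Your proposal is correct and follows precisely the classical Scott--Zhang scaling argument of \cite{Scott1990,Ciarlet2013} that the paper itself invokes. The paper does not actually prove this lemma --- it records the result as an ``immediate consequence'' of those references, relying only on Lemma~\ref{lem:SZconstruct}\,(i)--(iii), and terminates with a \qed\ --- so your detailed sketch is exactly what the paper leaves to the reader, including the correct identification of why $t>1/2$ is forced in part~(a) (trace onto a facet) but not in part~(b) (all $K_{\ell,z}$ are full $(d{-}1)$-dimensional boundary elements).
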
 
The properties \rm (i)--(iii) and (vi) of Lemma~\ref{lem:SZconstruct} ensure \textsl{a~priori} convergence of the Scott--Zhang projectors, e.g.,~\cite{Feischl2014}. We note that~\cite{Feischl2014} states the result for $q = 1$, but the proof transfers verbatim to any fixed $q \in \N$.
\begin{lemma}[{\textsl{A~priori} convergence of Scott--Zhang projector on $\boldsymbol{\Omega}$ \cite{Feischl2014}}]\label{lem:SZconv}
  For every $v \in H^{1}(\Omega)$, there exists a unique limit $P_{\infty}v \in H^1(\Omega)$ such that
  %
  \begin{equation}
   \lim_{\ell \rightarrow \infty} \norm{P_{\infty}v - P_{\ell}v}_{H^{1}(\Omega)} = 0 \quad \text{for all } v \in H^{1}(\Omega).
   \qquad \qed
  \end{equation}
  \end{lemma}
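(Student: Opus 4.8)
The plan is to prove that $(P_\ell v)_{\ell\in\N_0}$ is a Cauchy sequence in $H^1(\Omega)$ for each fixed $v\in H^1(\Omega)$; completeness of $H^1(\Omega)$ then provides the (necessarily unique) limit $P_\infty v$, and linearity and boundedness of $v\mapsto P_\infty v$ are inherited from a uniform bound on the $P_\ell$.

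First I would record the uniform bound $\sup_{\ell\in\N_0}\norm{P_\ell}_{\mathcal{L}(H^1(\Omega))}\le C$: this follows from local Scott--Zhang stability (Lemma~\ref{lem:SZ}, together with the companion $L^2$-estimate $\norm{P_\ell w}_{L^2(T)}\lesssim\norm{w}_{H^1(\Omega_\ell[T])}$ of \cite{Scott1990,Ciarlet2013}), summed over $T\in\TT_\ell$ while using that $\kappa$-shape regularity bounds the cardinality of each patch $\Omega_\ell[T]$ independently of $\ell$. Next, I would use nestedness: since $\TT_{\ell+1}$ is obtained from $\TT_\ell$ by newest-vertex bisection (step~(v) of Algorithm~\ref{algo:adap}), the discrete spaces are nested, $\SS^q(\TT_\ell)\subseteq\SS^q(\TT_k)$ for $k\ge\ell$, and since $P_k$ reproduces $\SS^q(\TT_k)$ one obtains $P_kP_\ell=P_\ell$. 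Writing $e_\ell\coloneqq v-P_\ell v$, which satisfies $P_\ell e_\ell=0$, this gives $P_k v-P_\ell v=P_k v-P_kP_\ell v=P_k e_\ell$ for all $k\ge\ell$, and it remains to show $\sup_{k\ge\ell}\norm{P_k e_\ell}_{H^1(\Omega)}\to0$ as $\ell\to\infty$.

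The decisive step is a localization of $P_k e_\ell$ based on the compatible choice of the sets $(K_{\ell,z})$ from Lemma~\ref{lem:SZconstruct}. Expanding $P_k e_\ell$ in the nodal basis of $\SS^q(\TT_k)$, the coefficient attached to a node $z$ is $\int_{K_{k,z}}\psi_{k,z}\,e_\ell$; whenever $z\in\DD_k\cap\DD_\ell$ with $\zeta_{k,z}=\zeta_{\ell,z}$, iterating property~(vi) of Lemma~\ref{lem:SZconstruct} along the intermediate meshes forces $K_{k,z}=K_{\ell,z}$, and since this element or facet is then not refined between levels $\ell$ and $k$ one also has $\psi_{k,z}=\psi_{\ell,z}$, so the coefficient equals a coefficient of $P_\ell e_\ell=0$ and therefore vanishes. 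Hence $P_k e_\ell$ is supported in the set $R_\ell$ given by the union of the supports of the basis functions of those nodes whose patch is refined at some step $\ge\ell$. I would then show that $R_\ell$ shrinks to mesh size zero: a counting argument shows that, for any $\delta>0$, only finitely many elements of diameter $\ge\delta$ are ever refined during the run of the algorithm (each is a descendant of some $T\in\TT_0$ through a uniformly bounded number of bisections, and there are only finitely many such descendants), which together with the grading of newest-vertex-bisection meshes (neighbouring elements have comparable size) yields $\delta_\ell\to0$ with $R_\ell\subseteq\{x\in\Omega:h_\ell(x)\le\delta_\ell\}$ for all large $\ell$.

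Finally, by locality of $P_k$ and the uniform bound, $\norm{P_k e_\ell}_{H^1(\Omega)}=\norm{P_k e_\ell}_{H^1(R_\ell)}\lesssim\norm{v-P_\ell v}_{H^1(\{h_\ell\le C\delta_\ell\})}$, and the first-order approximation property of the Scott--Zhang projector gives $\norm{w-P_\ell w}_{H^1(\{h_\ell\le\delta\})}\lesssim\delta\norm{w}_{H^2(\Omega)}\to0$ along $\delta=\delta_\ell$ for $w\in H^2(\Omega)$, whence a $3\varepsilon$-argument using density of $H^2(\Omega)$ in $H^1(\Omega)$ and the uniform bound extends this to arbitrary $v\in H^1(\Omega)$; this closes the Cauchy estimate. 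The hard part is the localization step, where the fine construction of $(K_{\ell,z})$ in Lemma~\ref{lem:SZconstruct}, notably property~(vi), is indispensable: it confines the support of $P_k e_\ell$ to the region disturbed after level $\ell$, which is already uniformly fine. Everything else --- local Scott--Zhang stability and approximation plus a density argument --- is routine; alternatively, one may simply invoke \cite{Feischl2014}, where the case $q=1$ is treated and the proof carries over verbatim to fixed $q\in\N$.
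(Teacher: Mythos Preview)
The paper does not give its own proof of this lemma: the statement is closed by \qed and credited to \cite{Feischl2014}, with the remark that the argument there for $q=1$ transfers verbatim to general $q\in\N$. Your sketch correctly reconstructs that argument --- uniform stability, the projector identity $P_kP_\ell=P_\ell$ from nestedness, the crucial localization via Lemma~\ref{lem:SZconstruct}\,(vi) so that $P_k(v-P_\ell v)$ lives only on the region touched by refinement after step $\ell$, the observation that this region carries uniformly small mesh size, and the closing density argument --- and you yourself note that one may simply invoke \cite{Feischl2014}; there is nothing to correct.
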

Lastly, the choices of $K_{\ell,z}$ in Lemma~\ref{lem:SZconstruct} ensure validity ofthe following statement.
\begin{lemma}[Scott--Zhang projector is compatible with boundary strip] \label{lem:SZrest}
 Let $\ell \in \N_0$ and $w \in H^1(\Omega)$. Then, $\mathrm{supp}(w) \subseteq \overline{\omega_\ell}$ yields $\mathrm{supp}(P_\ell w) \subseteq \overline{\omega_\ell}$.
\end{lemma}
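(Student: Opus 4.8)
The plan is to expand $P_\ell w$ in the nodal basis via~\eqref{eq:SZdef}, so that $\mathrm{supp}(P_\ell w)$ is contained in the union of the supports $\mathrm{supp}(\zeta_{\ell,z})$ over those nodes $z \in \DD_\ell$ whose Scott--Zhang coefficient $c_z \coloneqq \int_{K_{\ell,z}} \psi_{\ell,z}\, w \,\d x$ is nonzero. It then suffices to show that every such $z$ satisfies $\mathrm{supp}(\zeta_{\ell,z}) \subseteq \overline{\omega_\ell}$. Since $\Gamma = \partial\Omega$ and $\omega_\ell$ is open, $\overline{\Omega}$ is the disjoint union of $\omega_\ell$, $\Gamma$, and $\Omega \setminus \omega_\ell$, and I would split $\DD_\ell$ accordingly, treating the three node groups separately. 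Throughout I use the elementary identity $\mathrm{supp}(\zeta_{\ell,z}) = \bigcup \set{T \in \TT_\ell \given z \in T}$ and the following two facts for conforming meshes: $T \in \TT_\ell^\omega$ implies $\mathrm{interior}(T) \subseteq \omega_\ell$, whereas $T \notin \TT_\ell^\omega$ implies $\mathrm{interior}(T) \cap \overline{\omega_\ell} = \emptyset$ (the latter since $\mathrm{interior}(T) \cap T' = \emptyset$ for $T' \neq T$ and $\bigcup \TT_\ell^\omega = \overline{\omega_\ell}$).

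For $z \in \DD_\ell(\omega_\ell) \cup \DD_\ell(\Gamma)$ I claim $\mathrm{supp}(\zeta_{\ell,z}) \subseteq \overline{\omega_\ell}$, so that such nodes may contribute without harm. If $z \in \Gamma$, every $T \ni z$ meets $\Gamma$ (at $z$), hence $T \in \TT_\ell[\Gamma] \subseteq \TT_\ell^{[k]}[\Gamma] = \TT_\ell^\omega$ as $k \geq 1$. If $z \in \omega_\ell$, every $T \ni z$ has interior points arbitrarily close to $z$, which (as $z$ is an interior point of $\bigcup \TT_\ell^\omega$) is incompatible with $\mathrm{interior}(T) \cap \overline{\omega_\ell} = \emptyset$, so again $T \in \TT_\ell^\omega$. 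In either case $\mathrm{supp}(\zeta_{\ell,z}) \subseteq \bigcup \TT_\ell^\omega = \overline{\omega_\ell}$.

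For $z \in \DD_\ell(\Omega \setminus \omega_\ell)$ I claim $c_z = 0$, so that such nodes never contribute. Here $z \notin \Gamma$, and Lemma~\ref{lem:SZconstruct}(v) gives $K_{\ell,z} \subseteq \Omega \setminus \omega_\ell$, so $K_{\ell,z} \cap \omega_\ell = \emptyset$ and $K_{\ell,z}$ is either a volume simplex or an interior facet. The hypothesis $\mathrm{supp}(w) \subseteq \overline{\omega_\ell}$ means $w = 0$ a.e.\ on $\Omega \setminus \overline{\omega_\ell}$, hence $w$ vanishes a.e.\ on every element $T \in \TT_\ell \setminus \TT_\ell^\omega$; since $w \in H^1(\Omega)$ has matching traces across interior facets, the trace of $w$ then vanishes on every interior facet having such a $T$ as a neighbor. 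Consequently: if $K_{\ell,z}$ is an element $T$, then $T \cap \omega_\ell = \emptyset$ forces $T \notin \TT_\ell^\omega$, so $w = 0$ a.e.\ on $K_{\ell,z}$ and $c_z = 0$; if $K_{\ell,z}$ is an interior facet $F$ with neighboring elements $T_1, T_2$, then at least one $T_i \notin \TT_\ell^\omega$ --- otherwise $\mathrm{interior}(T_1 \cup T_2)$ would be an open subset of $\bigcup\TT_\ell^\omega$, hence of $\omega_\ell$, while it contains the relative interior of $F$, contradicting $F \cap \omega_\ell = \emptyset$ --- so the trace of $w$ on $F$ vanishes and again $c_z = 0$.

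Combining the two claims, only nodes from $\DD_\ell(\omega_\ell) \cup \DD_\ell(\Gamma)$ contribute to $P_\ell w$ and their basis functions are supported in $\overline{\omega_\ell}$, which proves $\mathrm{supp}(P_\ell w) \subseteq \overline{\omega_\ell}$. I expect the only genuinely delicate point to be the facet case of the second claim: the interface nodes $z \in \Gamma_\ell^\complement \subseteq \overline{\omega_\ell}$ belong to $\DD_\ell(\Omega \setminus \omega_\ell)$ even though their basis functions $\zeta_{\ell,z}$ do protrude out of $\overline{\omega_\ell}$, so it is precisely property~(v) of Lemma~\ref{lem:SZconstruct} (the dual functional at such a node samples only the exterior side of the strip) together with the trace continuity of $H^1(\Omega)$-functions across $\Gamma_\ell^\complement$ that makes their coefficients vanish. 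The rest is routine bookkeeping with conforming meshes.
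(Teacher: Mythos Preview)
Your proof is correct and follows the same overall scheme as the paper's (expand $P_\ell w$ in the nodal basis, control either the coefficient or the basis-function support at each node). The one genuine difference is the treatment of the boundary nodes $z \in \DD_\ell(\Gamma)$. The paper lumps these with $\DD_\ell(\Omega\setminus\omega_\ell)$ and asserts $w|_{K_{\ell,z}} = 0$ via property~(iii) of Lemma~\ref{lem:SZconstruct}; but (iii) only gives $K_{\ell,z}\subseteq\Gamma\subseteq\overline{\omega_\ell}$, so the hypothesis $\mathrm{supp}(w)\subseteq\overline{\omega_\ell}$ alone does \emph{not} force $w|_{K_{\ell,z}}=0$. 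Your route---showing instead that $\mathrm{supp}(\zeta_{\ell,z})\subseteq\overline{\omega_\ell}$ for $z\in\Gamma$ because every element containing $z$ already lies in $\TT_\ell[\Gamma]\subseteq\TT_\ell^\omega$---is the right fix for the lemma \emph{as stated}. (In the paper's application the input $w$ additionally satisfies $w|_\Gamma=0$, which is why the paper's shortcut causes no downstream issues and in fact yields the stronger conclusion $P_\ell w\in\SS^q_{\partial\omega_\ell}(\TT_\ell^\omega)$.) Your careful facet argument for $z\in\DD_\ell(\Omega\setminus\omega_\ell)$, using trace continuity across $\Gamma_\ell^\complement$, also makes explicit a step the paper leaves implicit.
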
 
\begin{proof}
Given $w \in H^1(\Omega)$ with $\mathrm{supp}(w) \subseteq \overline{\omega_\ell}$ and $z \in \DD_\ell \setminus \omega_\ell$, there holds $w|_{K_{\ell,z}} = 0$ due to the properties \rm{(i)}, \rm{(iii)}, and \rm{(v)} of Lemma~\ref{lem:SZconstruct}. Hence,
\begin{equation*}
 P_\ell w = \sum_{z \in \DD_\ell(\omega_\ell)} \Bigl( \int_{K_{\ell,z}} \psi_{\ell,z} w \, \d{x} \Bigr) \zeta_{\ell,z},
\end{equation*}
which yields $\mathrm{supp}(P_\ell w) \subseteq \displaystyle{\bigcup_{z \in \DD_\ell(\omega_\ell)} \mathrm{supp}(\zeta_{\ell,z})}$. Since $(\zeta_{\ell,z})_{z \in \DD_\ell \cap \omega_\ell}$ is the nodal basis of the space $\SS^q_{\partial \omega_\ell}(\TT_\ell^\omega)$, we obtain $\mathrm{supp}(\zeta_{\ell,z}) \subseteq \overline{\omega_\ell}$ for all $z \in \DD_\ell(\omega_\ell)$. This implies $\mathrm{supp}(P_\ell w) \subseteq \overline{\omega_\ell}$ and concludes the proof.
\end{proof}

\subsection{Proof of Theorem~\ref{theorem:mainresult}} \label{subsec:proofmainresult}
As in other works~\cite{Morin2008, Siebert2011, Gantner2021} on plain convergence of adaptive finite element methods, one crucial ingredient of our proof is \textsl{a~priori} convergence in Hilbert spaces that already goes back to the seminal work~\cite{bv1984} and results from the validity of the C\'ea lemma in the context of the Lax--Milgram lemma; see also~\cite{Morin2008} for the generalization to uniformly $\inf$-$\sup$-stable problems.

\begin{lemma}[{\textsl{A~priori} convergence in Hilbert spaces~\cite{bv1984}}]\label{lemma:apriori}
Let $H$ be a Hilbert space and $X_\ell \subseteq H$ a sequence of closed subspaces with $X_\ell \subseteq X_{\ell+1}$ for all $\ell \in \N_0$. Let $a(\cdot,\cdot)$ be an elliptic and continuous bilinear form and $F \in H'$. Then, for all $\ell \in \N_0$, the Lax--Milgram yields the existence and uniqueness of $x_\ell \in X_\ell$ with 
\begin{align}
 a(x_\ell, y_\ell) = F(y_\ell)
 \quad \text{for all } y_\ell \in X_\ell.
\end{align}
Moreover, there exists a unique limit $x_\infty \in X_\infty \coloneqq \overline{\bigcup_{\ell \in \N_0} X_\ell} \subseteq H$ such that
\begin{align}
 \lim_{\ell \to \infty} \norm{x_\infty - x_\ell}_{H} = 0.
 \qquad \qed
\end{align}
\end{lemma}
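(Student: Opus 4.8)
The plan is to reduce the entire statement to the Lax--Milgram lemma combined with a C\'ea-type best-approximation estimate. First I would observe that, since each $X_\ell \subseteq H$ is closed, it is itself a Hilbert space; applying the Lax--Milgram lemma to the elliptic and continuous bilinear form $a(\cdot,\cdot)$ and to the restriction $F|_{X_\ell} \in X_\ell'$ immediately yields the unique $x_\ell \in X_\ell$ with $a(x_\ell,y_\ell) = F(y_\ell)$ for all $y_\ell \in X_\ell$, which is the first assertion. The same argument applied to the closed subspace $X_\infty = \overline{\bigcup_{\ell\in\N_0} X_\ell}$ produces a unique $x_\infty \in X_\infty$ with $a(x_\infty, y) = F(y)$ for all $y \in X_\infty$. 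This $x_\infty$ is the candidate limit, and its uniqueness as a limit will follow from the uniqueness part of Lax--Milgram on $X_\infty$.

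Next I would establish the C\'ea-type bound. Since $X_\ell \subseteq X_\infty$, subtracting the variational problems for $x_\infty$ and $x_\ell$ yields the \emph{Galerkin orthogonality} $a(x_\infty - x_\ell, y_\ell) = 0$ for all $y_\ell \in X_\ell$. Writing $c > 0$ for the ellipticity constant and $C > 0$ for the continuity constant of $a$, for any $y_\ell \in X_\ell$ we estimate
\begin{align*}
 c\,\norm{x_\infty - x_\ell}_H^2
 \le a(x_\infty - x_\ell, x_\infty - x_\ell)
 = a(x_\infty - x_\ell, x_\infty - y_\ell)
 \le C\,\norm{x_\infty - x_\ell}_H\,\norm{x_\infty - y_\ell}_H,
\end{align*}
where the middle equality uses $y_\ell - x_\ell \in X_\ell$ together with Galerkin orthogonality. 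Dividing and taking the infimum over $y_\ell \in X_\ell$ gives $\norm{x_\infty - x_\ell}_H \le (C/c)\,\dist(x_\infty, X_\ell)$.

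It then remains to show $\dist(x_\infty, X_\ell) \to 0$ as $\ell \to \infty$. Given $\varepsilon > 0$, the definition of $X_\infty$ as the closure of $\bigcup_\ell X_\ell$ provides some $y \in \bigcup_\ell X_\ell$ with $\norm{x_\infty - y}_H < \varepsilon$, say $y \in X_{\ell_0}$. By the nestedness $X_{\ell_0} \subseteq X_\ell$ for all $\ell \ge \ell_0$, we obtain $\dist(x_\infty, X_\ell) \le \norm{x_\infty - y}_H < \varepsilon$ for all $\ell \ge \ell_0$. Combined with the C\'ea bound this shows $\norm{x_\infty - x_\ell}_H \to 0$, and since $x_\infty$ is the unique Lax--Milgram solution on $X_\infty$, it is the unique possible limit; this finishes the proof.

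I do not expect a serious obstacle here. The one point that needs a little care is that the nestedness $X_\ell \subseteq X_{\ell+1}$ is exactly what makes the best-approximation errors $\dist(x_\infty, X_\ell)$ a null sequence — a single approximant eventually lies in every $X_\ell$ — and that the closedness of $X_\infty$ is what allows Lax--Milgram to produce the limiting solution in the first place; without these two structural assumptions the argument would break down.
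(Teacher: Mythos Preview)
Your argument is correct and matches the paper's treatment: the paper does not spell out a proof but simply cites \cite{bv1984} and remarks that the result ``results from the validity of the C\'ea lemma in the context of the Lax--Milgram lemma'', which is precisely the route you take. There is nothing to add.
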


This allows to prove \textsl{a~priori} convergence of the discretized boundary residuals $g_{\ell} = J_{\ell}(g - V\phi_{\ell}^\exact)$ from~\eqref{eq:defwh}.

\begin{corollary}[{\textsl{A~priori} convergence of discretized boundary residuals $\boldsymbol{g_\ell}$}] \label{cor:gellconv}
Recall $g_{\ell}$ from~\eqref{eq:defwh}. 
Then, there exists a limit $g_{\infty} \in H^{1/2}(\Gamma)$ such that 
\begin{equation}
 \lim_{\ell \rightarrow \infty} \norm{g_{\infty} - g_{\ell}}_{H^{1/2}(\Gamma)} = 0,
\end{equation}
where we note that $g_\ell$ is defined on $\partial\omega_\ell \supseteq \Gamma$.
\end{corollary}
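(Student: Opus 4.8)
The plan is to combine \textsl{a~priori} convergence of the Galerkin BEM densities $\phi_\ell^\exact$ with \textsl{a~priori} convergence of the Scott--Zhang projectors, using that $J_\ell$ is obtained as the trace of $P_\ell$. First, observe that $\phi^\exact \in L^2(\Gamma)$ by the additional regularity $g \in H^1(\Gamma)$, and the Galerkin spaces $\PP^p(\TT_\ell^\Gamma)$ are nested (newest-vertex bisection only refines), so Lemma~\ref{lemma:apriori} applied to the elliptic, continuous bilinear form $\dual{V\cdot}{\cdot}_\Gamma$ on $H^{-1/2}(\Gamma)$ with $X_\ell = \PP^p(\TT_\ell^\Gamma)$ yields a limit $\phi_\infty^\exact \in H^{-1/2}(\Gamma)$ with $\norm{\phi_\infty^\exact - \phi_\ell^\exact}_{H^{-1/2}(\Gamma)} \to 0$. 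By the mapping property $V \colon H^{-1/2}(\Gamma) \to H^{1/2}(\Gamma)$, the residuals $r_\ell \coloneqq g - V\phi_\ell^\exact$ converge in $H^{1/2}(\Gamma)$ to $r_\infty \coloneqq g - V\phi_\infty^\exact$.

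Next I would handle the projector. Pick any $H^1(\Omega)$-extension; more precisely, let $\widetilde r_\infty \in H^1(\Omega)$ be a lifting of $r_\infty \in H^{1/2}(\Gamma)$ with bounded trace (note $r_\infty \in H^{1/2}(\Gamma)$, not necessarily $H^1(\Gamma)$, so the natural space is $H^1(\Omega)$). By Lemma~\ref{lem:SZconv}, $P_\ell \widetilde r_\infty \to P_\infty \widetilde r_\infty$ in $H^1(\Omega)$, hence by continuity of the trace operator $J_\ell(r_\infty) = (P_\ell \widetilde r_\infty)|_\Gamma \to (P_\infty \widetilde r_\infty)|_\Gamma =: g_\infty$ in $H^{1/2}(\Gamma)$. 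It remains to pass from $J_\ell r_\infty$ to $g_\ell = J_\ell r_\ell$: write
\begin{equation*}
 \norm{g_\ell - g_\infty}_{H^{1/2}(\Gamma)}
 \le \norm{J_\ell(r_\ell - r_\infty)}_{H^{1/2}(\Gamma)}
 + \norm{J_\ell r_\infty - g_\infty}_{H^{1/2}(\Gamma)}.
\end{equation*}
The second term vanishes as $\ell \to \infty$ by the previous step. For the first term, uniform $H^{1/2}(\Gamma)$-stability of $J_\ell$ (which follows from Lemma~\ref{lem:SZ}, as $\kappa$-shape regularity is uniform, via interpolation between the $L^2$-stability~\eqref{eq:SZtraceL2} and the $H^1$-stability~\eqref{eq:SZtracestab} with $s=t=1$) bounds it by $\const{C}{stab}\norm{r_\ell - r_\infty}_{H^{1/2}(\Gamma)} \to 0$. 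Combining, $g_\ell \to g_\infty$ in $H^{1/2}(\Gamma)$, and uniqueness of the limit is clear. Finally, one records the minor point that $g_\ell$ lives on $\partial\omega_\ell$, but since it is defined via its trace data on $\Gamma$ (extended by $0$ on $\Gamma_\ell^\complement$), the convergence statement is understood on $\Gamma$, which is what the auxiliary problem~\eqref{eq:defwh} needs.

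The main obstacle is the \emph{moving domain}: the operator $J_\ell$ itself does not act between fixed spaces in an entirely $\ell$-independent way, and one must be careful that the \textsl{a~priori} limit $g_\infty = (P_\infty \widetilde r_\infty)|_\Gamma$ is genuinely independent of the chosen $H^1(\Omega)$-lifting $\widetilde r_\infty$ of $r_\infty$ --- this follows because any two liftings differ by an element of $H^1_{\partial\Omega}(\Omega)$, whose Scott--Zhang images $P_\ell$ have vanishing trace (the relevant nodes $z \in \DD_\ell(\Gamma)$ have $K_{\ell,z} \subseteq \Gamma$ by Lemma~\ref{lem:SZconstruct}\rm{(iii)}), so their trace limit is $0$. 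A secondary point is ensuring the $H^{1/2}(\Gamma)$-stability of $J_\ell$ uniformly in $\ell$; this is standard from Lemma~\ref{lem:SZ} but should be cited explicitly since the estimator~\eqref{eq:rel} already relies on it.
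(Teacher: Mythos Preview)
Your proposal is correct and follows essentially the same route as the paper: both apply Lemma~\ref{lemma:apriori} to obtain $\phi_\ell^\exact \to \phi_\infty^\exact$ in $H^{-1/2}(\Gamma)$, then split $g_\ell - g_\infty$ into a term controlled by $H^{1/2}(\Gamma)$-stability of $J_\ell$ together with continuity of $V$, and a term handled via Lemma~\ref{lem:SZconv} and the trace identity $(P_\ell v)|_\Gamma = J_\ell(v|_\Gamma)$. The only cosmetic difference is that the paper fixes the concrete lifting $\widetilde r_\infty = u^\exact - u_\infty^\exact = \widetilde V(\phi^\exact - \phi_\infty^\exact)$, whereas you take an arbitrary $H^1(\Omega)$-extension and add the (correct) independence argument; your extra remarks on the moving domain are not needed here since the statement concerns only $\Gamma$.
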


\begin{proof}
According to Lemma~\ref{lemma:apriori} applied to $H = H^{-1/2}(\Gamma)$ and $X_\ell = \PP^p(\TT_\ell^\Gamma)$, there exists $\phi_\infty^\star \in H^{-1/2}(\Gamma)$ such that 
\begin{align}\label{eq:apriori:phi}
\norm{\phi_\infty^\exact - \phi_\ell^\exact}_{H^{-1/2}(\Gamma)} \xrightarrow{\ell \to \infty} 0.
\end{align}
With Lemma~\ref{lem:SZconv}, we define $g_\infty \coloneqq (P_\infty(u^\exact - u_\infty^\exact))|_\Gamma$, where $u_\infty^\exact \coloneqq \widetilde V\phi_\infty^\exact$. With $H^{1/2}(\Gamma)$-stability of $J_\ell$ from~\cite{Feischl2014, Aurada2015}, continuity of $V$, continuity of the trace operator, and~\eqref{eq:apriori:phi}, we obtain
\begin{align*}
 \norm{g_{\infty} - g_{\ell}}_{H^{1/2}(\Gamma)} 
 &\leq 
 \norm{g_{\infty} - J_{\ell}(g - V\phi_\infty^\exact)}_{H^{1/2}(\Gamma)} 
 + \norm{J_{\ell}(g - V\phi_\infty^\exact) - g_{\ell}}_{H^{1/2}(\Gamma)} 
 \\&
 = 
 \norm{((P_{\infty} - P_{\ell})(u^\exact - u_{\infty}^\exact))|_\Gamma}_{H^{1/2}(\Gamma)} 
 + \norm{J_{\ell}V(\phi_\infty^\exact - \phi_{\ell}^\exact)}_{H^{1/2}(\Gamma)} 
 \\&
 \lesssim  
 \norm{(P_{\infty} - P_{\ell})(u^\exact - u_{\infty}^\exact)}_{H^{1}(\Omega)} 
 + \norm{\phi_\infty^\exact - \phi_{\ell}^\exact}_{H^{-1/2}(\Gamma)} 
 \xrightarrow{\ell \to \infty} 0.
\end{align*}
This concludes the proof.
\end{proof}

The proof of Theorem~\ref{theorem:mainresult} requires the following theorem, which is proved in Section~\ref{section:extension} below but stated here to ease reading of the core arguments.

\begin{theorem}[\textsl{A~priori} convergent lifting of discretized boundary residual] \label{thm:extconv}
  Under the assumptions of Section~\ref{section:mainresult}, there exists a sequence $(\widehat{g}_\ell)_{\ell \in \N_0}$ in $H^1(\Omega)$ with $\widehat{g}_{\ell} \in \SS^q(\TT_\ell^\omega)$ and $\widehat{g}_{\ell}|_{\Gamma} = g_\ell$ and $\widehat{g}_{\ell|_{\Gamma_\ell^\complement}} = 0$ for all $\ell \in \N_0$ as well as a limit function $\widehat{g}_{\infty} \in H^1(\Omega)$ such that 
  \begin{equation} \label{eq:extconv}
   \lim_{\ell \rightarrow \infty} \norm{\nabla(\widehat{g}_{\infty} - \widehat{g}_{\ell})}_{L^2(\Omega)} = 0.
  \end{equation}
  Moreover, there exists a constant $\const{\widetilde{C}}{eff} > 0$ depending only on the dimension $d$ and $\kappa$-shape regularity of $\TT_\ell$ such that 
  \begin{equation} \label{eq:exteff}
   \norm{\nabla \widehat{g}_\ell}_{L^2(\Omega)} \leq \const{\widetilde{C}}{eff} \, \norm{\phi^\exact - \phi_\ell^\exact}_{H^{-1/2}(\Omega)} \quad \text{for all } \ell \in \N_0.
  \end{equation}
\end{theorem}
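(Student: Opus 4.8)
The plan is to build $\widehat{g}_\ell$ as a Scott--Zhang quasi-interpolant of a suitably cut-off potential error and then to extract both assertions of the theorem from this explicit formula. Write $z_\ell^\exact \coloneqq u^\exact - u_\ell^\exact = \widetilde{V}(\phi^\exact - \phi_\ell^\exact)$, so that $z_\ell^\exact$ is harmonic in $\Omega$, $z_\ell^\exact|_\Gamma = g - V\phi_\ell^\exact$, and $\norm{\nabla z_\ell^\exact}_{L^2(\Omega)} \le \norm{z_\ell^\exact}_{H^1(\Omega)} \lesssim \norm{\phi^\exact - \phi_\ell^\exact}_{H^{-1/2}(\Gamma)}$ by the mapping property of $\widetilde{V}$. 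Let $\beta_\ell \in \SS^1(\TT_\ell)$ be the piecewise affine cut-off whose nodal values are $1$ on $\overline{\Omega_\ell^{[k-1]}[\Gamma]}$ (for $k = 1$: on $\Gamma$) and $0$ elsewhere; one checks $0 \le \beta_\ell \le 1$, $\beta_\ell|_\Gamma \equiv 1$, $\beta_\ell \equiv 0$ on $\Omega \setminus \omega_\ell$, and $\mathrm{supp}\,\beta_\ell \subseteq \overline{\omega_\ell}$. Set $\widehat{g}_\ell \coloneqq P_\ell(\beta_\ell z_\ell^\exact)$. Since $\mathrm{supp}(\beta_\ell z_\ell^\exact) \subseteq \overline{\omega_\ell}$, Lemma~\ref{lem:SZrest} yields $\widehat{g}_\ell \in \SS^q(\TT_\ell^\omega)$; from $(P_\ell v)|_\Gamma = J_\ell(v|_\Gamma)$ and $\beta_\ell|_\Gamma = 1$ we get $\widehat{g}_\ell|_\Gamma = J_\ell(g - V\phi_\ell^\exact) = g_\ell$; and for any Lagrange node $z$ on $\Gamma_\ell^\complement$ we have $K_{\ell,z} \subseteq \Omega \setminus \omega_\ell$ by Lemma~\ref{lem:SZconstruct}(v), where $\beta_\ell z_\ell^\exact$ vanishes, so all nodal values of $\widehat{g}_\ell$ on $\Gamma_\ell^\complement$ are zero, i.e.\ $\widehat{g}_\ell|_{\Gamma_\ell^\complement} = 0$. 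Hence $\widehat{g}_\ell$ extends by zero to $H^1(\Omega)$ and has all structural properties claimed.

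For efficiency~\eqref{eq:exteff} I would use three ingredients. First, $H^1$-seminorm stability of $P_\ell$ (Lemma~\ref{lem:SZ}(a) with $s = t = 1$, summed over elements using bounded patch overlap) gives $\norm{\nabla \widehat{g}_\ell}_{L^2(\Omega)} \lesssim \norm{\nabla(\beta_\ell z_\ell^\exact)}_{L^2(\omega_\ell)} \le \norm{\nabla z_\ell^\exact}_{L^2(\Omega)} + \norm{z_\ell^\exact \nabla\beta_\ell}_{L^2(\omega_\ell)}$. Second, a scaled-trace (discrete Hardy) estimate on the strip: every element $T$ on which $\nabla\beta_\ell \neq 0$ is separated from $\Gamma$ by at most $k$ layers of comparable size, hence $\dist(T,\Gamma) \lesssim h_T$, and chaining the elementwise trace inequality $\norm{v}_{L^2(T)}^2 \lesssim h_T \norm{v}_{L^2(\partial T \cap \Gamma)}^2 + h_T^2 \norm{\nabla v}_{L^2(T)}^2$ through these layers yields $\norm{z_\ell^\exact \nabla\beta_\ell}_{L^2(\omega_\ell)}^2 \lesssim \norm{h_\ell^{-1/2} z_\ell^\exact|_\Gamma}_{L^2(\Gamma)}^2 + \norm{\nabla z_\ell^\exact}_{L^2(\omega_\ell)}^2$. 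Third, and crucially, Galerkin orthogonality: because $\PP^0(\TT_\ell^\Gamma) \subseteq \PP^p(\TT_\ell^\Gamma)$, \eqref{eq:weakdisc} forces $Q_\ell^\Gamma(g - V\phi_\ell^\exact) = 0$, so by \eqref{eq:avgprop} with $s = 1/2$, $\norm{h_\ell^{-1/2}(g - V\phi_\ell^\exact)}_{L^2(\Gamma)}^2 = \sum_{F} h_F^{-1} \norm{(1 - Q_\ell^\Gamma)(g - V\phi_\ell^\exact)}_{L^2(F)}^2 \lesssim \sum_F \abs{g - V\phi_\ell^\exact}_{H^{1/2}(F)}^2 \le \abs{g - V\phi_\ell^\exact}_{H^{1/2}(\Gamma)}^2 \lesssim \norm{\phi^\exact - \phi_\ell^\exact}_{H^{-1/2}(\Gamma)}^2$, where the last two steps use $g - V\phi_\ell^\exact = V(\phi^\exact - \phi_\ell^\exact)$ and boundedness of $V$. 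Combining the three ingredients and the bound on $\norm{\nabla z_\ell^\exact}_{L^2(\Omega)}$ gives \eqref{eq:exteff} with a constant depending only on $d$, $\kappa$-shape regularity, and the fixed operator norms of $V$ and $\widetilde{V}$.

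For the \textsl{a~priori} convergence~\eqref{eq:extconv} I would first record that newest-vertex bisection and the patch construction give nested strip closures $\overline{\omega_{\ell+1}} \subseteq \overline{\omega_\ell}$, so $\omega_\infty \coloneqq \mathrm{interior}\big(\bigcap_\ell \overline{\omega_\ell}\big)$ and its cut-off $\beta_\infty$ are well-defined, and that $z_\ell^\exact \to z_\infty^\exact \coloneqq u^\exact - u_\infty^\exact$ in $H^1(\Omega)$ by \eqref{eq:apriori:phi} (with $u_\infty^\exact \coloneqq \widetilde{V}\phi_\infty^\exact$) and continuity of $\widetilde{V}$. Put $\widehat{g}_\infty \coloneqq P_\infty(\beta_\infty z_\infty^\exact)$ and split $\widehat{g}_\ell - \widehat{g}_\infty = (P_\ell - P_\infty)(\beta_\infty z_\infty^\exact) + P_\ell\big(\beta_\ell(z_\ell^\exact - z_\infty^\exact)\big) + P_\ell\big((\beta_\ell - \beta_\infty) z_\infty^\exact\big)$. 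The first term vanishes by Lemma~\ref{lem:SZconv}. The second is estimated exactly as in the efficiency step, now applied to $(z_\ell^\exact - z_\infty^\exact)|_\Gamma = V(\phi_\infty^\exact - \phi_\ell^\exact)$: this boundary function again has vanishing $Q_\ell^\Gamma$-average on every $F \in \TT_\ell^\Gamma$, since both $g - V\phi_\ell^\exact$ and $g - V\phi_\infty^\exact$ are $L^2(\Gamma)$-orthogonal to $\PP^0(\TT_\ell^\Gamma)$ — the latter by passing \eqref{eq:weakdisc} to the limit along the nested meshes — so $\norm{\nabla P_\ell(\beta_\ell(z_\ell^\exact - z_\infty^\exact))}_{L^2(\Omega)} \lesssim \norm{\nabla(z_\ell^\exact - z_\infty^\exact)}_{L^2(\Omega)} + \norm{\phi_\infty^\exact - \phi_\ell^\exact}_{H^{-1/2}(\Gamma)} \to 0$.

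The remaining term $P_\ell\big((\beta_\ell - \beta_\infty) z_\infty^\exact\big)$ is, I expect, the main obstacle: the cut-offs $\beta_\ell$ are not uniformly $W^{1,\infty}$-bounded and do not converge in $H^1(\Omega)$, so no product rule is available. I would instead split $\Omega$ into the region where the strip eventually freezes — there $\beta_\ell = \beta_\infty$ for $\ell$ large, hence no contribution — and the region refined infinitely often, where $h_\ell \to 0$ pointwise a.e.\ and the support of $\beta_\ell - \beta_\infty$ has measure tending to $0$. On the latter, $\norm{(\beta_\ell - \beta_\infty)\nabla z_\infty^\exact}_{L^2(\Omega)} \to 0$ by absolute continuity of $\int \abs{\nabla z_\infty^\exact}^2$, and $\norm{z_\infty^\exact \nabla(\beta_\ell - \beta_\infty)}_{L^2(\Omega)}^2 \lesssim \sum_F \abs{g - V\phi_\infty^\exact}_{H^{1/2}(F)}^2 + \norm{\nabla z_\infty^\exact}_{L^2(\mathrm{supp}(\beta_\ell - \beta_\infty))}^2 \to 0$ via the same scaled-trace argument together with the zero-average property of $g - V\phi_\infty^\exact$ on all meshes and dominated convergence (the piecewise $H^{1/2}$-content of a fixed function over facets with $h_F \to 0$ tends to $0$). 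Making this geometric dichotomy — and the precise behaviour of $\Gamma_\ell^\complement$, $\omega_\infty$, and the sets $(K_{\ell,z})$ from Lemma~\ref{lem:SZconstruct} — rigorous is the technical heart of the argument and the reason the proof is deferred to Section~\ref{section:extension}.
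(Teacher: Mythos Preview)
Your construction $\widehat{g}_\ell = P_\ell(\beta_\ell z_\ell^\exact)$ is genuinely different from the paper's. The paper never touches the volume potential error $z_\ell^\exact$; instead it takes $\widehat{g}_\ell$ to be the \emph{natural nodal lifting}~\eqref{eq:natext} of $g_\ell = J_\ell(g - V\phi_\ell^\exact)$, which is supported on a single layer of elements touching $\Gamma$. Efficiency~\eqref{eq:exteff} then follows from the elementwise lifting estimate (Lemma~\ref{lem:discnorm}) combined with inverse inequalities and the same zero-average trick $Q_\ell^\Gamma(g - V\phi_\ell^\exact) = 0$ that you also use (Proposition~\ref{prop:extbound} with $r=2$). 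Your efficiency argument via $H^1$-stability of $P_\ell$, the product rule, and a trace-chaining (discrete Hardy) estimate is correct and arrives at the same bound by a different but equally standard route.

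The substantive divergence is in the convergence proof, and here your sketch has a real gap that the paper's machinery is designed to avoid. You need a limit cut-off $\beta_\infty$ with $\beta_\infty z_\infty^\exact \in H^1(\Omega)$ and, crucially, $\beta_\ell = \beta_\infty$ on ``frozen'' elements. Neither is automatic: $\omega_\infty$ need not be Lipschitz, and the nodal value $\beta_\ell(z)$ at a frozen node $z$ depends on whether $z \in \overline{\Omega_\ell^{[k-1]}[\Gamma]}$, which is a \emph{global} mesh property and can change even if the patch of $z$ is frozen. Your dominated-convergence claim that $\sum_{F:\,h_F\to 0} |g - V\phi_\infty^\exact|_{H^{1/2}(F)}^2 \to 0$ is correct, but it does not by itself control $\|z_\infty^\exact \nabla(\beta_\ell - \beta_\infty)\|_{L^2}$ unless you can localize $\mathrm{supp}\,\nabla(\beta_\ell - \beta_\infty)$ to chains rooted only at those shrinking facets --- and that localization is precisely what the undefined $\beta_\infty$ is supposed to provide.

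The paper sidesteps all of this. Because the natural lifting has one-layer support, the authors can (i) prove an $L^r$-bound for some $r > 2$ (Proposition~\ref{prop:extbound}), and then (ii) use H\"older's inequality to turn ``shrinking measure of $\omega_\ell \setminus \omega_\infty$ and of $\bigcup(\TT_\infty \setminus \TT_\ell^0)$'' directly into ``shrinking $L^2$-norm'', after a density approximation $\phi^\exact - \phi_\infty^\exact \approx \varphi_\eps \in L^2(\Gamma)$ that upgrades the available regularity from $H^{1/2}$ to $H^{s_r}$ with $s_r > 1/2$. The limit $\widehat{g}_\infty$ is built piecewise on $\TT_\infty$ (Step~2 of the paper's proof), not as $P_\infty$ of anything. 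This $L^r$-trick is the technical idea you are missing; without an analogue of it, your third term is not under control.
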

Theorem~\ref{thm:extconv} is a crucial ingredient in the proofs of the following results stating efficiency of the functional error estimates and \textsl{a~priori} convergence of the auxiliary solutions.

\begin{proposition}[Efficiency of functional error estimator]\label{prop:efficiency}
Under the assumptions of Section~\ref{section:mainresult}, there exists a constant $\const{C}{eff} > 0$ depending only on $d$ and $\kappa$-shape regularity of $\TT_\ell$ such that
\begin{equation} \label{eq:efficiency}
 \eta_\ell = \norm{\nabla w_\ell^\exact}_{L^2(\Omega)} \leq \const{C}{eff} \, \norm{\phi^\exact - \phi_\ell^\exact}_{H^{-1/2}(\Gamma)}.
\end{equation}
\end{proposition}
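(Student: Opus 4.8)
The plan is to exploit that $w_\ell^\exact$ is, by construction, the Dirichlet-energy minimizer over the affine subspace of $\SS^q(\TT_\ell^\omega)$ consisting of functions matching the prescribed data on $\partial\omega_\ell = \Gamma \cup \Gamma_\ell^\complement$, and to compare it with the explicit competitor $\widehat{g}_\ell$ furnished by Theorem~\ref{thm:extconv}, whose gradient norm is already controlled by $\norm{\phi^\exact - \phi_\ell^\exact}_{H^{-1/2}(\Gamma)}$. First I would recall that, extended by zero according to the homogeneous condition on $\Gamma_\ell^\complement$, the solution $w_\ell^\exact$ of~\eqref{eq:defwh} lies in $H^1(\Omega)$ with $\norm{\nabla w_\ell^\exact}_{L^2(\Omega)} = \norm{\nabla w_\ell^\exact}_{L^2(\omega_\ell)}$.

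Next, I would take $\widehat{g}_\ell \in \SS^q(\TT_\ell^\omega)$ from Theorem~\ref{thm:extconv}. Since $\widehat{g}_\ell|_\Gamma = g_\ell = w_\ell^\exact|_\Gamma$ and $\widehat{g}_\ell|_{\Gamma_\ell^\complement} = 0 = w_\ell^\exact|_{\Gamma_\ell^\complement}$, the difference $w_\ell^\exact - \widehat{g}_\ell$ belongs to $\SS^q_{\partial\omega_\ell}(\TT_\ell^\omega)$ and is thus an admissible test function in the first line of~\eqref{eq:defwh}; inserting it gives $\dual{\nabla w_\ell^\exact}{\nabla(w_\ell^\exact - \widehat{g}_\ell)}_{L^2(\omega_\ell)} = 0$, whence the Cauchy--Schwarz inequality yields the quasi-minimality $\norm{\nabla w_\ell^\exact}_{L^2(\omega_\ell)} \leq \norm{\nabla \widehat{g}_\ell}_{L^2(\omega_\ell)} \leq \norm{\nabla \widehat{g}_\ell}_{L^2(\Omega)}$. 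Finally, I would combine this with the efficiency estimate~\eqref{eq:exteff} of Theorem~\ref{thm:extconv} to conclude $\eta_\ell = \norm{\nabla w_\ell^\exact}_{L^2(\Omega)} \leq \const{\widetilde{C}}{eff}\,\norm{\phi^\exact - \phi_\ell^\exact}_{H^{-1/2}(\Gamma)}$, that is,~\eqref{eq:efficiency} with $\const{C}{eff} \coloneqq \const{\widetilde{C}}{eff}$.

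Thus, once the preparatory machinery is in place, this proposition is essentially a one-line consequence of Theorem~\ref{thm:extconv} together with the Galerkin orthogonality (quasi-minimality) of the discrete harmonic extension $w_\ell^\exact$. The main obstacle is therefore entirely deferred to Theorem~\ref{thm:extconv}: constructing a $\TT_\ell^\omega$-piecewise polynomial lifting $\widehat{g}_\ell$ of $g_\ell = J_\ell(g - V\phi_\ell^\exact)$ that vanishes on $\Gamma_\ell^\complement$, whose gradient norm is bounded by $\norm{\phi^\exact - \phi_\ell^\exact}_{H^{-1/2}(\Gamma)}$ — which forces one to rewrite $g - V\phi_\ell^\exact = V(\phi^\exact - \phi_\ell^\exact)$ on $\Gamma$, exploit the mapping properties of $V$ and of the trace/Scott--Zhang operators, and localize over the thin strip $\omega_\ell$ via the construction of Lemma~\ref{lem:SZconstruct} — and which is moreover \textsl{a~priori} convergent. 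Within the proof of Proposition~\ref{prop:efficiency} itself, the only point requiring any care is verifying that $w_\ell^\exact - \widehat{g}_\ell$ is a legitimate test function, which is immediate from the matching boundary data on both $\Gamma$ and $\Gamma_\ell^\complement$.
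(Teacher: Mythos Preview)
Your proof is correct and follows essentially the same route as the paper: both invoke the lifting $\widehat{g}_\ell$ from Theorem~\ref{thm:extconv} and exploit the variational characterization of $w_\ell^\exact$ in $\SS^q(\TT_\ell^\omega)$. The only difference is cosmetic: the paper decomposes $w_\ell^\exact = w_{\ell,0}^\exact + \widehat{g}_\ell$, bounds $\norm{\nabla w_{\ell,0}^\exact}_{L^2(\Omega)} \le \norm{\nabla \widehat{g}_\ell}_{L^2(\Omega)}$, and then applies the triangle inequality to arrive at $\const{C}{eff} = 2\const{\widetilde{C}}{eff}$, whereas you test~\eqref{eq:defwh} directly with $w_\ell^\exact - \widehat{g}_\ell$ and use Cauchy--Schwarz to obtain the sharper bound $\norm{\nabla w_\ell^\exact}_{L^2(\omega_\ell)} \le \norm{\nabla \widehat{g}_\ell}_{L^2(\omega_\ell)}$, i.e., $\const{C}{eff} = \const{\widetilde{C}}{eff}$.
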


\begin{proof}
 Let $(\widehat{g}_\ell)_{\ell \in \N_0}$ be the sequence from Theorem~\ref{thm:extconv}. Since $\widehat{g}_\ell \in \SS^q(\TT_\ell^\omega)$, $\widehat{g}_{\ell}|_{\Gamma} = g_\ell$, and $\widehat{g}_{\ell}|_{\Gamma_\ell^\complement} = 0$, we have that $w_\ell^\exact = w_{\ell,0}^\exact + \widehat{g}_{\ell}$, where $w_{\ell,0}^\exact \in \SS^q_{\partial \omega_\ell}(\TT_\ell^\omega)$ is the unique solution to 
 \begin{equation} \label{eq:dirichletzeroprob}
  \dual{\nabla w_{\ell,0}^\exact}{\nabla v_\ell}_{L^2(\omega_\ell)} 
  = 
  -\dual{\nabla \widehat{g}_{\ell}}{\nabla v_\ell}_{L^2(\omega_\ell)} \quad 
  \text{for all } v_\ell \in \SS^q_{\partial \omega_\ell}(\TT_\ell^\omega).
 \end{equation}
 Choosing $v_\ell = w_{\ell,0}^\exact$ and extending $w_{\ell,0}^\exact$ and $\widehat{g}_\ell$ by zero to the whole domain $\Omega$, we obtain
 \begin{equation*}
  \norm{\nabla w_{\ell,0}^\exact}_{L^2(\Omega)}^2
  =
  \norm{\nabla w_{\ell,0}^\exact}_{L^2(\omega_\ell)}^2 
  = 
  -\dual{\nabla \widehat{g}_{\ell}}{\nabla w_{\ell,0}^\exact}_{L^2(\omega_\ell)} 
  \leq 
  \norm{\nabla \widehat{g}_{\ell}}_{L^2(\Omega)} \norm{\nabla w_{\ell,0}^\exact}_{L^2(\Omega)}.
 \end{equation*}
 Due to Theorem~\ref{thm:extconv}, this implies
 \begin{equation*}
  \begin{split}
   \norm{\nabla w_\ell^\exact}_{L^2(\Omega)} 
   \leq 
   \norm{\nabla w_{\ell,0}^\exact}_{L^2(\Omega)} + \norm{\nabla\widehat{g}_{\ell}}_{L^2(\Omega)} 
   \leq 
   2 \, \norm{\nabla\widehat{g}_{\ell}}_{L^2(\Omega)} 
   \eqreff{eq:exteff}{\leq} 
   2\const{\widetilde{C}}{eff} \, \norm{\phi^\exact - \phi_\ell^\exact}_{H^{-1/2}(\Gamma)}.
  \end{split}
 \end{equation*}
 This concludes \eqref{eq:efficiency} with $\const{C}{eff} = 2 \const{\widetilde{C}}{eff}$.
\end{proof}

\begin{proposition}[\textsl{A~priori} convergence of functional upper bound]\label{prop:auxconv}
Under the assumptions of Section~\ref{section:mainresult}, there exists $w_\infty^\exact \in H^1(\Omega)$ such that
\begin{equation} \label{eq:auxconv}
 \lim_{\ell \to \infty} \norm{\nabla(w_\infty^\exact - w_\ell^\exact)}_{L^2(\Omega)} 
 = 
 0.
\end{equation}
\end{proposition}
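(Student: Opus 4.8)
The plan is to build on the splitting $w_\ell^\exact = w_{\ell,0}^\exact + \widehat{g}_\ell$ already used in the proof of Proposition~\ref{prop:efficiency}, where $\widehat{g}_\ell \in \SS^q(\TT_\ell^\omega)$ is the \textsl{a~priori} convergent lifting from Theorem~\ref{thm:extconv} (so $\nabla\widehat{g}_\ell \to \nabla\widehat{g}_\infty$ in $L^2(\Omega)$) and $w_{\ell,0}^\exact \in \SS^q_{\partial\omega_\ell}(\TT_\ell^\omega)$, extended by zero to $\Omega$, solves~\eqref{eq:dirichletzeroprob}. Since the lifting term already converges in the $H^1$-seminorm, it suffices to prove that $\nabla w_{\ell,0}^\exact$ converges in $L^2(\Omega)$; the limit is then $w_\infty^\exact \coloneqq \chi_\infty + \widehat{g}_\infty$, where $\chi_\infty \coloneqq \lim_\ell w_{\ell,0}^\exact$. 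I would first record the uniform energy bound obtained by testing~\eqref{eq:dirichletzeroprob} with $v_\ell = w_{\ell,0}^\exact$, namely $\norm{\nabla w_{\ell,0}^\exact}_{L^2(\Omega)} \le \norm{\nabla\widehat{g}_\ell}_{L^2(\Omega)} \lesssim 1$ by Theorem~\ref{thm:extconv}. Since $w_{\ell,0}^\exact$ vanishes on all of $\partial\omega_\ell \supseteq \Gamma = \partial\Omega$, its zero extension lies in $H^1_0(\Omega)$, so the Friedrichs inequality on the \emph{fixed} domain $\Omega$ yields a uniform $H^1(\Omega)$-bound. Hence, along a subsequence, $w_{\ell,0}^\exact \rightharpoonup \chi$ weakly in $H^1_0(\Omega)$.

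The core step is the identification of the weak limit $\chi$ by passing to the limit in~\eqref{eq:dirichletzeroprob}. Here the moving strip $\omega_\ell$ enters: the constraint spaces $\SS^q_{\partial\omega_\ell}(\TT_\ell^\omega)$ are \emph{not} nested, so the abstract \textsl{a~priori} convergence of Lemma~\ref{lemma:apriori} cannot be invoked directly. To circumvent this, I would work with test functions supported in the eventual strips: if $v \in C_c^\infty(\Omega)$ with $\operatorname{supp}(v)$ contained in the interior of $\bigcap_{\ell} \omega_\ell$ (so that $\operatorname{supp}(v)\subseteq\omega_\ell$ for all $\ell$), then, by the strip-compatible construction of the Scott--Zhang projector, Lemma~\ref{lem:SZconstruct}\,(v) and Lemma~\ref{lem:SZrest} show $\operatorname{supp}(P_\ell v)\subseteq\overline{\omega_\ell}$ and $P_\ell v|_{\partial\omega_\ell}=0$, i.e., $P_\ell v \in \SS^q_{\partial\omega_\ell}(\TT_\ell^\omega)$, while $P_\ell v \to v$ in $H^1(\Omega)$. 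Inserting $v_\ell = P_\ell v$ into the zero-extended version of~\eqref{eq:dirichletzeroprob} and using weak--strong convergence on the left and strong convergence of $\nabla\widehat{g}_\ell$ on the right, we obtain the limiting identity $\dual{\nabla\chi}{\nabla v}_{L^2(\Omega)} = -\dual{\nabla\widehat{g}_\infty}{\nabla v}_{L^2(\Omega)}$ for all such $v$. Together with the fact that $\chi$, being a weak $H^1$-limit of functions supported in $\overline{\omega_m}$ for all $m$, is itself supported in $\bigcap_m\overline{\omega_m}$, this characterizes $\chi = \chi_\infty$ as the unique solution of a well-posed (elliptic) variational problem on a fixed closed subspace of $H^1_0(\Omega)$; in particular $\chi$ does not depend on the subsequence. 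I would also check that the carried-along boundary datum $\widehat{g}_\infty|_\Gamma = g_\infty$ from Corollary~\ref{cor:gellconv} is the expected one, so that $w_\infty^\exact$ is the natural limit of the auxiliary problems~\eqref{eq:defwh}.

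Finally, I would upgrade weak to strong convergence by an energy argument. By the Galerkin orthogonality in~\eqref{eq:dirichletzeroprob}, $\norm{\nabla w_{\ell,0}^\exact}_{L^2(\Omega)}^2 = -\dual{\nabla\widehat{g}_\ell}{\nabla w_{\ell,0}^\exact}_{L^2(\Omega)}$, and passing to the limit (strong convergence of $\nabla\widehat{g}_\ell$, weak convergence of $\nabla w_{\ell,0}^\exact$) gives $\norm{\nabla w_{\ell,0}^\exact}_{L^2(\Omega)}^2 \to -\dual{\nabla\widehat{g}_\infty}{\nabla\chi_\infty}_{L^2(\Omega)}$, which equals $\norm{\nabla\chi_\infty}_{L^2(\Omega)}^2$ by the limiting identity tested with $v=\chi_\infty$ (justified by a density argument, since $\chi_\infty$ lies in the closure of the admissible test functions). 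Weak convergence together with convergence of the norms yields $\nabla w_{\ell,0}^\exact \to \nabla\chi_\infty$ strongly in $L^2(\Omega)$, and since the limit is independent of the subsequence, the full sequence converges. Adding back $\nabla\widehat{g}_\ell\to\nabla\widehat{g}_\infty$ gives~\eqref{eq:auxconv}.

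The main obstacle is the limit-identification step: because the strip $\omega_\ell$ changes in every step, the discrete spaces are not nested, and one must instead control the geometry of the strips in the limit. Concretely, one needs that compactly supported test functions can be quasi-interpolated into $\SS^q_{\partial\omega_\ell}(\TT_\ell^\omega)$ for large $\ell$ (which is exactly what the strip-compatible Scott--Zhang construction of Lemma~\ref{lem:SZconstruct}--\ref{lem:SZrest} provides) and that the limiting variational problem remains well-posed on the possibly degenerate limit strip $\bigcap_\ell\omega_\ell$ --- in particular that the (capacity-zero) boundary layers discarded in the limit do not affect the $H^1_0$-problem. Verifying these geometric facts about $\omega_\ell$, and ensuring that the limiting space and the limit boundary datum $g_\infty$ match, is where the bulk of the technical work lies.
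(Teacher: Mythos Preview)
Your overall architecture---split $w_\ell^\exact=w_{\ell,0}^\exact+\widehat g_\ell$, extract a weakly convergent subsequence of $(w_{\ell,0}^\exact)$ in $H^1_\Gamma(\Omega)$, identify the weak limit via a limiting variational identity, and upgrade to strong convergence by convergence of norms---is exactly the paper's strategy. The gap is in the limit-identification step.

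You test with $v\in C_c^\infty(\Omega)$ supported in the interior of $\bigcap_\ell\omega_\ell$ and assert that $P_\ell v\to v$ in $H^1(\Omega)$. But Lemma~\ref{lem:SZconv} only gives $P_\ell v\to P_\infty v$, and $P_\infty v=v$ would require $v$ to lie in the $H^1$-closure of the discrete spaces. This fails in general: by Step~1 of the proof of Theorem~\ref{thm:extconv}, every point of $\omega_\infty=\bigcap_\ell\omega_\ell$ lies in some element of $\TT_\infty$, i.e., the mesh is \emph{eventually frozen} on $\omega_\infty$. Hence smooth $v$ supported there are not approximated by $P_\ell v$ unless they happen to be piecewise polynomial on the frozen mesh. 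Consequently your limiting identity $\dual{\nabla\chi}{\nabla v}_{L^2(\Omega)}=-\dual{\nabla\widehat g_\infty}{\nabla v}_{L^2(\Omega)}$ is not obtained for the test class you propose, and the ``density argument'' you invoke at the end (testing with $v=\chi_\infty$) cannot be justified: $\chi_\infty$ need not lie in the $H^1$-closure of $C_c^\infty(\mathrm{int}(\omega_\infty))$. In short, the limiting problem is \emph{not} a continuous PDE on the limit strip but an inherently discrete problem on the frozen mesh, and smooth test functions are the wrong tool.

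The paper circumvents this by defining the limit space abstractly as
\[
\XX_\infty\coloneqq\bigcap_{\ell\ge0}\overline{\XX_\ell},\qquad
\XX_\ell\coloneqq\bigcup_{m\ge\ell}\bigl\{v_m\in\SS^q_\Gamma(\TT_m):\operatorname{supp}(v_m)\subseteq\overline{\omega_\ell}\bigr\},
\]
which is a closed (hence weakly closed) subspace of $H^1_\Gamma(\Omega)$ automatically containing the weak limit by Mazur's lemma. The limiting variational problem~\eqref{eq:infprob} is then posed on $\XX_\infty$ itself. To pass to the limit in the discrete equations one must show, for arbitrary $v_\infty\in\XX_\infty$, that the Ritz projection $R_jv_\infty$ onto $\SS^q_{\partial\omega_j}(\TT_j^\omega)$ converges to $v_\infty$; this is done via the chain
\[
\norm{\nabla(v_\infty-R_jv_\infty)}_{L^2(\omega_j)}
\le \norm{\nabla(v_\infty-P_mv_\infty)}_{L^2(\Omega)}
+\norm{\nabla(P_m-P_j)v_\infty}_{L^2(\Omega)},
\]
using Lemma~\ref{lem:SZrest} to guarantee $P_mv_\infty\in\SS^q_{\partial\omega_j}(\TT_j^\omega)$ (since $\operatorname{supp}(v_\infty)\subseteq\overline{\omega_m}$), Lemma~\ref{lem:SZconv} for the Cauchy property of $(P_mv_\infty)_m$, and the quasi-best-approximation property of $P_m$ together with the very definition $v_\infty\in\overline{\XX_m}$ to control the first term. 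This replaces your unjustified $P_\ell v\to v$ by an argument that stays entirely within the discrete hierarchy.
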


\begin{proof}

For $\ell \in \N_0$, let $\widehat{g}_\ell$ be the extension of $g_\ell$ from Theorem~\ref{thm:extconv}. Then, it follows that $w_\ell^\exact = w_{\ell,0}^\exact + \widehat{g}_\ell$, where $w_{\ell,0}^\exact \in \SS^q_{\partial \omega_\ell}(\TT_\ell^\omega)$ solves \eqref{eq:dirichletzeroprob} and satisfies $\norm{\nabla w_{\ell,0}^\exact}_{L^2(\Omega)} \leq \norm{\nabla \widehat{g}_{\ell}}_{L^2(\Omega)}$.

\textbf{Step~1 (Problem formulation on limit space)}. For $\ell \in \N_0$, we define the space
\begin{equation*}
 \XX_\ell 
 \coloneqq 
 \bigcup_{m \geq \ell} \set{v_m \in \SS^q_{\Gamma}}(\TT_m) \given \mathrm{supp}(v_m) \subseteq \overline{\omega_\ell} \subseteq H^1_{\partial \omega_\ell}(\omega_\ell) \subseteq H_\Gamma^1(\Omega).
\end{equation*}
Since $\XX_\ell$ is a union of nested subspaces, $\XX_\ell\subseteq H_0^1(\Omega)$ is again a subspace. We note that $\XX_\ell \supseteq \XX_{\ell + 1}$ since $\omega_\ell \supseteq \omega_{\ell + 1}$. Hence, we may define the limit space
\begin{equation*}
 \XX_\infty 
 \coloneqq 
 \bigcap_{\ell \in \N_0} \overline{\XX_\ell}
 \subseteq
 H^1_{\partial \omega_j}(\omega_j)
 \subseteq
 H_{\Gamma}^1(\Omega)
 \quad \text{for all } j \in \N_0,
\end{equation*}
where the closures are taken in $H^1_{\partial \omega_\ell}(\omega_\ell)$. As an intersection of closed subspaces, $\XX_\infty \subseteq H_{\Gamma}^1(\Omega)$ is a closed subspace itself. Hence, there exists a unique $w_{\infty,0}^\exact \in \XX_\infty$ such that
\begin{equation} \label{eq:infprob}
 \dual{\nabla w_{\infty,0}^\exact}{\nabla v_\infty}_{L^2(\Omega)} 
 = 
 -\dual{\nabla \widehat{g}_\infty}{\nabla v_\infty}_{L^2(\Omega)} 
 \quad \text{for all } v_\infty \in \XX_\infty.
\end{equation}

\textbf{Step~2 (Weak convergence of certain subsequences)}.
Since $(\widehat{g}_{\ell})_{\ell \in \N_0}$ is a convergent sequence in $H^1(\Omega)$, there is a constant $C > 0$ such that $\norm{\nabla w_{\ell,0}^\exact}_{L^2(\Omega)} \leq \norm{\nabla \widehat{g}_{\ell}}_{L^2(\Omega)} \leq C$ for all $\ell \in \N_0$. Let $(w_{\ell_k,0}^\exact)_{k \in N_0}$ be an arbitrary subsequence of $(w_{\ell,0}^\exact)_{\ell \in \N_0}$. Since $(w_{\ell_k,0}^\exact)_{k \in \N_0}$ is also bounded in $H_{\Gamma}^1(\Omega)$, the Banach--Alaoglu theorem yields a further subsequence $(w_{\ell_{k_j},0}^\exact)_{j \in \N_0}$ which converges weakly to some $w_{\infty,0} \in H_{\Gamma}^1(\Omega)$. According to Mazur's theorem, the closed subspace $\XX_\infty$ is also weakly closed and hence contains the limit $w_{\infty,0} \in \XX_\infty$.

\textbf{Step~3 (Proof of \boldsymbol{$w_{\infty,0} = w_{\infty,0}^\exact$})}. We want to show that $w_{\infty,0} = w_{\infty,0}^\exact$. To ease notation, we abbreviate $w_{j,0}^\exact \coloneqq w_{\ell_{k_j},0}^\exact$ for all $j \in \N_0$. Let $\eps > 0$ and $v_\infty \in \XX_\infty$ be arbitrary but fixed. We denote by $R_j \colon H^1_{\partial \omega_j}(\omega_j) \to \SS^q_{\partial \omega_j}(\TT_j^\omega)$ the Ritz projector defined via
\begin{equation} \label{eq:orthproj}
 \dual{\nabla (R_j v)}{\nabla v_j}_{L^2(\omega_j)} 
 = 
 \dual{\nabla v}{\nabla v_j}_{L^2(\omega_j)} 
 \quad \text{for all } v \in H^1_{\partial \omega_j}(\omega_j) \text{ and all } v_j \in \SS^q_{\partial \omega_j}(\TT_j^\omega).
\end{equation}
Being the orthogonal projection with respect to the $H_{\partial \omega_j}^1(\omega_j)$-inner product, $R_j$ satisfies
\begin{equation} \label{eq:bestapprox}
 \norm{\nabla(v - R_j v)}_{L^2(\omega_j)} 
 = 
 \min_{v_j \in \SS^q_{\partial \omega_j}(\TT_j^\omega)} \norm{\nabla(v - v_j)}_{L^2(\omega_j)} 
 \quad \text{for all } v \in H^1_{\partial \omega_j}(\omega_j).
\end{equation}
For every $j \in \N_0$, we obtain
\begin{equation} \label{eq:probrep}
 \begin{split}
  &\abs{\dual{\nabla(w_{\infty,0} + \widehat{g}_\infty)}{\nabla v_\infty}_{L^2(\omega_\infty)}} \\
  &\qquad \leq 
  \abs{\dual{\nabla(w_{j,0}^\exact + \widehat{g}_j)}{\nabla (R_j v_\infty)}_{L^2(\omega_j)}} 
  + \abs{\dual{\nabla(w_{\infty,0} - w_{j,0}^\exact)}{\nabla v_\infty}_{L^2(\omega_j)}} \\
  &\qquad + \abs{\dual{\nabla \widehat{g}_\infty - \nabla \widehat{g}_j}{\nabla v_\infty}_{L^2(\omega_j)}} 
  + \abs{\dual{\nabla w_{j,0}^\exact}{\nabla (v_\infty - R_j v_\infty)}_{L^2(\omega_j)}} \\
  &\qquad + \abs{\dual{\nabla\widehat{g}_j}{\nabla (v_\infty - R_j v_\infty)}_{L^2(\omega_j)}}.
 \end{split}
\end{equation}
The first summand on the right-hand side of~\eqref{eq:probrep} equals zero by~\eqref{eq:dirichletzeroprob}. Since $\widehat{g}_j \to \widehat{g}_\infty$ strongly in $H^1(\Omega)$ and $w_{j,0}^\exact \rightharpoonup w_{\infty,0}$ weakly in $H^1_{\Gamma}(\Omega)$, there exists $j_0 \in N_0$ such that 
\begin{equation} \label{eq:strongweakesti}
    \abs{\dual{\nabla(w_{\infty,0} - w_{j,0}^\exact)}{\nabla v_\infty}_{L^2(\Omega)}} 
    + 
    \abs{\dual{\nabla \widehat{g}_\infty - \nabla \widehat{g}_j}{\nabla v_\infty}_{L^2(\Omega)}} 
    \leq
    \eps
    \quad \text{for all } j \geq j_0.
\end{equation}
The orthogonality \eqref{eq:orthproj} of the Ritz projector and the fact that $w_{j,0}^\exact \in \SS_{\partial \omega_j}^q(\TT_j^\omega)$ imply $\dual{\nabla w_{j,0}^\exact}{\nabla(v_\infty - R_j v_\infty)}_{L^2(\omega_j)} = 0$. It remains to estimate the last summand of $\eqref{eq:probrep}$. Let $m \geq j \geq j_0$ and consider the Scott--Zhang projector $P_j \colon H^1_{\Gamma}(\Omega) \to \SS_{\Gamma}^q(\TT_{j})$. Due to Lemma~\ref{lem:SZrest} and since $\mathrm{supp}(v_\infty) \subseteq \overline{\omega_j}$, there holds $\mathrm{supp}(P_jv_\infty) \subseteq \overline{\omega_j}$ and hence $P_jv_\infty \in \SS_{\partial \omega_j}^q(\TT_j^\omega)$. We thus obtain
\begin{equation} \label{eq:RitzSZ}
 \begin{split}
  \norm{\nabla(v_\infty - R_j v_\infty)}_{L^2(\omega_j)} 
  &= 
  \min_{v_j \in \SS_{\partial \omega_j}^q(\TT_j^\omega)} \norm{\nabla(v_\infty - v_j)}_{L^2(\omega_j)} \\
  &\leq 
  \norm{\nabla(v_\infty - P_m v_\infty)}_{L^2(\omega_j)} \\
  &\qquad+ \min_{v_j \in \SS_{\partial \omega_j}^q(\TT_j^\omega)} \norm{\nabla(P_m v_\infty - v_j)}_{L^2(\omega_j)} \\
  &\leq
  \norm{\nabla(v_\infty - P_m v_\infty)}_{L^2(\Omega)} + \norm{\nabla(P_m - P_j)v_\infty}_{L^2(\Omega)}.
 \end{split}
\end{equation}
We first consider the second term of the right-hand side of~\eqref{eq:RitzSZ}. By Lemma~\ref{lem:SZconv}, the sequence $(P_m v_\infty)_{m \in \N_0}$ is a Cauchy-sequence. This yields the existence of $j_1 \in \N_0$ such that 
\begin{equation*}
 \norm{\nabla(P_m - P_j)v_\infty}_{L^2(\Omega)} 
\leq
 \eps
 \quad \text{for all } m \geq j \geq j_1.
\end{equation*}
For the first summand on the right-hand side of~\eqref{eq:RitzSZ}, we again refer to Lemma~\ref{lem:SZrest} and note that because of $\mathrm{supp}(v_\infty) \subseteq \overline{\omega_m}$ for all $m \in \N_0$, it follows that 
\begin{equation*}
  \mathrm{supp}(P_mv_\infty) \subseteq \overline{\omega_m} \subset \overline{\omega_j}.
\end{equation*}
Hence, Lemma~\ref{lem:SZ} yields
\begin{equation*}
 \norm{\nabla(v_\infty - P_m v_\infty)}_{L^2(\Omega)} 
 =
 \norm{\nabla(v_\infty - P_m v_\infty)}_{L^2(\omega_j)}
 \leq 
 \const{C}{stab}\min_{\substack{v_m^j \in \SS_0^q(\TT_m) \\ \mathrm{supp}(v_m^j) \subseteq \omega_j}} \norm{\nabla(v_\infty - v_m^j)}_{L^2(\omega_j)}.
\end{equation*}
For given $j \geq j_1$, since $v_\infty \in \XX_\infty \subset \overline{\XX_m}$, there exists $m_j \geq m$ and $v_{m_j}^j \in \SS_{\Gamma}^q(\TT_{m_j})$ with $\mathrm{supp}(v_{m_j}^j) \subseteq \overline{\omega_j}$ such that
\begin{equation*}
 \norm{\nabla(v_\infty - v_{m_j}^j)}_{L^2(\omega_j)} 
 =
 \norm{\nabla(v_\infty - v_{m_j}^j)}_{L^2(\Omega)}
 \leq 
 \eps.
\end{equation*}
This yields
\begin{equation*}
 \norm{\nabla(v_\infty - R_jv_\infty)}_{L^2(\omega_j)} \leq (1 + \const{C}{stab})\eps \quad \text{for all } j \geq j_1.
\end{equation*}
In conclusion, we have that 
\begin{equation*}
 \abs{\dual{\nabla \widehat{g}_{j}}{\nabla(v_\infty - R_mv_\infty)}_{L^2(\omega_{j})}} 
 \leq 
 (1 + \const{C}{stab})\eps \sup_{\ell \in \N_0} \norm{\nabla \widehat{g}_{\ell}}_{L^2(\Omega)} 
 \quad \text{for all } j \geq j_1.
\end{equation*}
Together with~\eqref{eq:probrep}--\eqref{eq:strongweakesti} and the fact that $\norm{\nabla \widehat{g}_{\ell}}_{L^2(\Omega)} \lesssim 1$, this yields
\begin{equation*}
 \abs{\dual{\nabla(w_{\infty,0}^\exact + \widehat{g}_\infty)}{\nabla v_\infty}_{L^2(\omega_\infty)}} 
 \lesssim 
 \eps.
\end{equation*} 
Since $\eps > 0$ and $v_\infty \in \XX_\infty$ were arbitrary, we conclude that the right-hand side of~\eqref{eq:probrep} vanishes and hence
\begin{equation*}
 \dual{\nabla w_{\infty,0}}{\nabla v_\infty}_{L^2(\Omega)} 
 = 
 -\dual{\nabla \widehat{g}_{\infty}}{\nabla v_\infty}_{L^2(\Omega)}
 \quad \text{for all } v_\infty \in \XX_\infty.
\end{equation*}
Since~\eqref{eq:infprob} has a unique solution, we conclude $w_{\infty,0} = w_{\infty,0}^\exact$.

\textbf{Step~4 (Weak convergence of full sequence)}. Step~2 and~3 prove that any subsequence $(w_{\ell_k,0}^\exact)_{k \in \N_0}$ of $(w_{\ell,0}^{\exact})_{\ell \in \N_0}$ admits a weakly convergent subsequence $(w_{\ell_{k_j},0}^\exact)_{j \in \N_0}$ which converges to the unique limit $w_{\infty,0}^\exact$. By elementary calculus, it follows that indeed the full sequence $(w_{\ell,0}^\exact)_{\ell \in \N_0}$ satisfies weak convergence $w_{\ell,0}^\exact \rightharpoonup w_{\infty,0}^\exact$ in $H^1_{\Gamma}(\Omega)$.

\textbf{Step~5 (Strong convergence of full sequence)}. Since $\nabla \widehat{g}_{\ell} \to \nabla \widehat{g}_{\infty}$ in $L^2(\Omega)$ and $\nabla w_{\ell,0}^\exact \rightharpoonup \nabla w_{\infty,0}^\exact$ in $L^2(\Omega)$, it follows that
\begin{equation*}
 \begin{split}
 \norm{\nabla w_{\ell,0}^\exact}_{L^2(\Omega)}^2 
 &\eqreff{eq:dirichletzeroprob}{=} 
 -\dual{\nabla \widehat{g}_{\ell}}{\nabla w_{\ell,0}^\exact}_{L^2(\omega_\ell)}
 \xrightarrow{\ell \to \infty} 
 -\dual{\nabla \widehat{g}_\infty}{\nabla w_{\infty,0}^\exact}_{L^2(\Omega)} 
 \eqreff{eq:infprob}{=} 
 \norm{\nabla w_{\infty,0}^\exact}_{L^2(\Omega)}^2.
\end{split}
\end{equation*}
Recall that $\norm{\nabla(\cdot)}_{L^2(\Omega)}$ is an equivalent norm on $H_{\Gamma}^1(\Omega)$. Together with weak convergence $w_{\ell,0}^\exact \rightharpoonup w_{\infty,0}^\exact$ in $H_{\Gamma}^1(\Omega)$, convergence of the norm suffices for strong convergence
\begin{equation*}
 \lim_{\ell \to \infty} \norm{\nabla (w_{\infty,0}^\exact - w_{\ell,0}^\exact)}_{L^2(\Omega)} = 0.
\end{equation*}
In particular, Theorem~\ref{thm:extconv} yields strong convergence
\begin{equation*}
 \nabla w_{\ell}^\exact 
 = \nabla w_{\ell,0}^\exact + \nabla \widehat{g}_{\ell} 
 \xrightarrow{\ell \to \infty} 
 \nabla w_{\infty,0}^\exact + \nabla \widehat{g}_{\infty} 
 \eqqcolon \nabla w_{\infty}^\exact
 \quad \text{in } L^2(\Omega).
\end{equation*}
This concludes the proof.
\end{proof}

Recall the notation and assumptions from Section~\ref{section:mainresult}.
As in~\cite{Morin2008, Siebert2011, Gantner2021}, we define the set $\TT_\infty$ of all elements that remain unrefined after finitely many steps of Algorithm~\ref{algo:adap}, i.e.,
\begin{equation} \label{eq:Tinfty}
 \TT_{\infty} \coloneqq \bigcup_{\ell' \in \N_0} \bigcap_{\ell \geq \ell'} \TT_{\ell}.
\end{equation}
%
With Corollary~\ref{cor:gellconv} and Proposition~\ref{prop:auxconv}, we can adapt arguments of~\cite{Gantner2021} to derive the first convergence statement of Theorem~\ref{theorem:mainresult}, i.e., $\osc_\ell \to 0$ in~\eqref{eq:theorem:mainresult}.

\begin{proposition} \label{thm:eta2conv}
For any $\ell' \in \N_0$, Algorithm~\ref{algo:adap} guarantees
\begin{align} \label{eq:oscconv}
 \lim_{\ell' \leq \ell \rightarrow \infty} \eta_{\ell}(\TT_{\ell'} \cap \TT_\infty) = 0
 \quad \text{as well as} \quad
 \lim_{\ell \rightarrow \infty} \osc_{\ell} = 0.
\end{align}
\end{proposition}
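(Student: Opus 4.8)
The plan is to adapt the template for plain convergence of adaptive algorithms driven by the general marking criterion~\eqref{eq:marking} from~\cite{Morin2008,Siebert2011,Gantner2021}, where the roles usually played by \textsl{a~priori} convergence of the discrete PDE solution are now played by Proposition~\ref{prop:auxconv} (for the auxiliary solutions $w_\ell^\exact$) and Corollary~\ref{cor:gellconv} (for the discretized residuals $g_\ell$), and where the $\ell$-dependence of the strip $\omega_\ell$ must be handled throughout. Abbreviate the total indicator $\rho_\ell(T) \coloneqq \eta_\ell(T)^2 + \osc_\ell(T)^2$ for $T \in \TT_\ell$.

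\textbf{Step~1: marked elements shrink.} Since every $T \in \MM_\ell$ is refined in Algorithm~\ref{algo:adap}(v) and newest-vertex bisection never undoes a refinement, each simplex lies in at most one set $\MM_\ell$. As the set of $\kappa$-shape regular newest-vertex bisection descendants of the finitely many elements of $\TT_0$ with diameter bounded below by a fixed $\varepsilon>0$ is finite, this forces $\max_{T \in \MM_\ell} h_T \to 0$, hence $\max_{T \in \MM_\ell} |T| \to 0$ as $\ell \to \infty$.

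\textbf{Step~2: $\max_{T \in \MM_\ell}\rho_\ell(T) \to 0$.} For the estimator part, Proposition~\ref{prop:auxconv} gives $\|\nabla(w_\ell^\exact - w_\infty^\exact)\|_{L^2(\Omega)} \to 0$, while uniform absolute continuity of the Lebesgue integral (cf.~\eqref{eq:abscont}) yields $\sup_{T \in \MM_\ell}\|\nabla w_\infty^\exact\|_{L^2(T)} \to 0$ since $\sup_{T \in \MM_\ell}|T| \to 0$; together $\max_{T \in \MM_\ell}\eta_\ell(T) \to 0$. For the oscillation part, I would combine the local $H^1$-stability and approximation properties of the Scott--Zhang projector (Lemma~\ref{lem:SZ}), local inverse estimates for $V$ and for $\PP^p(\TT_\ell^\Gamma)$, $g_\ell \to g_\infty$ in $H^{1/2}(\Gamma)$ (Corollary~\ref{cor:gellconv}), $\phi_\ell^\exact \to \phi_\infty^\exact$ in $H^{-1/2}(\Gamma)$, and~\eqref{eq:abscont} applied on the shrinking patches $\Gamma_\ell[\partial T \cap \Gamma]$, so as to bound $\osc_\ell(T)$ for $T \in \MM_\ell$ by quantities tending to zero. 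This is the step I expect to be the main obstacle: the data $g - V\phi_\ell^\exact$ is only \emph{uniformly} bounded in $H^{1/2}(\Gamma)$, not in $H^1(\Gamma)$, so the $h_\ell^{1/2}$-weighting and the fact that $\TT_\ell^\Gamma$ resolves $V\phi_\ell^\exact$ at its own scale must be exploited to trade the missing half-derivative against a power of the mesh-size — a point that is delicate precisely because of the $\ell$-dependent strip.

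\textbf{Step~3: conclusion.} Granted Step~2, the marking criterion~\eqref{eq:marking} together with continuity of $M$ at $0 = M(0)$ gives $\max_{T \in \TT_\ell^\omega}\rho_\ell(T) \to 0$. Since $\TT_{\ell'} \cap \TT_\infty$ is a fixed, finite subset of $\TT_\ell$ for all $\ell \ge \ell'$ and none of its elements is ever marked, this immediately yields $\eta_\ell(\TT_{\ell'} \cap \TT_\infty)^2 = \sum_{T \in \TT_{\ell'} \cap \TT_\infty}\eta_\ell(T)^2 \to 0$, which is the first assertion. For the second, I would split $\Gamma = \Gamma_\infty \cup (\Gamma \setminus \Gamma_\infty)$ into the permanently fixed part $\Gamma_\infty$ and the refined part. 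On $\Gamma_\infty$ the boundary mesh and hence $J_\ell$ freeze (Lemma~\ref{lem:SZconstruct}(vi)) and $g - V\phi_\ell^\exact$ converges in $H^1_{\mathrm{loc}}$ (smoothing of $V$ away from the shrinking elements plus strong convergence of $\phi_\ell^\exact$ on the frozen part), so the per-element contributions $\osc_\ell(T)$ converge to their limit values, which vanish by $\max_{T\in\TT_\ell^\omega}\rho_\ell(T)\to 0$; on $\Gamma \setminus \Gamma_\infty$ the sum is controlled by the inverse-estimate argument of Step~2 together with $h_\ell \to 0$ pointwise and dominated convergence. This gives $\osc_\ell \to 0$; then~\eqref{eq:rel} forces $g - V\phi_\infty^\exact = g_\infty$, and combining this with the limiting Galerkin orthogonality $\dual{V(\phi^\exact - \phi_\infty^\exact)}{\psi_\infty}_\Gamma = 0$ for all $\psi_\infty$ in the limiting trial space and with the density of that space wherever the mesh is refined yields $\phi_\infty^\exact = \phi^\exact$ — which also closes the loop, since then $\eta_\ell \le \const{C}{eff}\|\phi^\exact - \phi_\ell^\exact\|_{H^{-1/2}(\Gamma)} \to 0$ by Proposition~\ref{prop:efficiency}.
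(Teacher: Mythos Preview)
Your proof has a genuine gap in Step~2 for the oscillation part, which you yourself flag as the main obstacle. The direct route---bounding $\osc_\ell(T)$ on marked elements by a limit contribution plus a perturbation that vanishes by absolute continuity---does not go through because $V$ is nonlocal and $V\phi_\ell^\exact$ is not uniformly bounded in $H^1(\Gamma)$ (only in $H^{1/2}(\Gamma)$). Locally, $h_T^{1/2}\|\nabla_\Gamma V\phi_\ell^\exact\|_{L^2(\partial T\cap\Gamma)}$ need not be small as $h_T\to 0$, and there is no local version of the inverse estimate $\|h_\ell^{1/2}\nabla_\Gamma V\psi_\ell\|_{L^2(\Gamma)}\lesssim\|\psi_\ell\|_{H^{-1/2}(\Gamma)}$ that would help: the right-hand side is inevitably global. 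As a consequence, Step~3 also breaks down: even granted $\max_{T\in\TT_\ell^\omega}\rho_\ell(T)\to 0$, this does not imply $\osc_\ell\to 0$ since the number of elements grows, and your heuristic splitting into $\Gamma_\infty$ and $\Gamma\setminus\Gamma_\infty$ cannot be made rigorous for the same nonlocality reason (changes of $\phi_\ell^\exact$ on the refined part influence $V\phi_\ell^\exact$ on the frozen part, and the frozen part may itself be infinite). Your final paragraph also overreaches: $\phi_\infty^\exact=\phi^\exact$ is not part of this proposition but is proved separately afterwards.

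The paper avoids proving $\max_{T\in\MM_\ell}\osc_\ell(T)\to 0$ directly. Instead, it establishes two structural properties of $\osc_\ell$: \emph{stability} $\osc_k(\TT_k\cap\TT_\ell)\le\osc_\ell(\TT_k\cap\TT_\ell)+a_{\ell,k}$ and \emph{reduction} $\osc_k(\TT_k\setminus\TT_\ell)\le 2^{-1/(2d)}\,\osc_\ell(\TT_\ell\setminus\TT_k)+a_{\ell,k}$, where the perturbation $a_{\ell,k}\to 0$ is controlled by the \emph{global} inverse estimates~\eqref{eq:invest:1/2}--\eqref{eq:invest:V} together with $\phi_\ell^\exact\to\phi_\infty^\exact$ in $H^{-1/2}(\Gamma)$ and \textsl{a~priori} convergence of the Scott--Zhang projectors. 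Reduction (which exploits the explicit $h_\ell^{1/2}$-weight and bisection halving) first yields $\osc_{\ell_k}(\MM_{\ell_k})\to 0$ only along a \emph{subsequence} via \cite[Theorem~3.1, Steps~1--2]{Gantner2021}; this is then combined with your argument for $\max_{T\in\MM_\ell}\eta_\ell(T)\to 0$ (which is correct and matches the paper) and the marking criterion to obtain $\osc_{\ell_k}(\TT_{\ell'}\cap\TT_\infty)\to 0$ along the subsequence; finally, stability and reduction are invoked once more (via \cite[Theorem~3.1, Steps~4--5]{Gantner2021}) to upgrade to the full sequence $\osc_\ell\to 0$. The first claim $\eta_\ell(\TT_{\ell'}\cap\TT_\infty)\to 0$ then follows from marking, since now $\max_{T\in\MM_\ell}[\eta_\ell(T)^2+\osc_\ell(T)^2]\le\max_{T\in\MM_\ell}\eta_\ell(T)^2+\osc_\ell^2\to 0$.
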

\bigskip
\begin{proof}
The proof is split into six steps.

\textbf{Step~1 (Stability of $\boldsymbol{\osc_\ell}$).}
We show that, for all $k, \ell \in \N_0$ with $k > \ell$, 
\begin{subequations}\label{eq:stability:osc}
\begin{equation}
 \osc_{k}(\TT_k \cap \TT_{\ell}) 
 \leq 
 \osc_{\ell}(\TT_k \cap \TT_{\ell}) + a_{\ell,k},
\end{equation}
where 
\begin{equation}
 a_{\ell,k} 
 \coloneqq 
 \norm{h_k^{1/2} \nabla_\Gamma \bigl( (1-J_{\ell})(g - V\phi_{\ell}^\exact) - (1-J_k)(g-V\phi_k^\exact) \bigr)}_{L^2(\Gamma)}
 \xrightarrow{k,\ell \to \infty} 0.
\end{equation}
\end{subequations}
Let $\ell,k \in \N_0$ with $k > \ell$.
The triangle inequality proves that
\begin{align*}
 \osc_{k}(\TT_k \cap \TT_{\ell}) 
 &= 
 \Bigl( \sum_{T \in \TT_k \cap \TT_{\ell}} \norm{h_k^{1/2} \nabla_\Gamma (1 - J_k)(g - V \phi_k^\exact)}_{L^2(\partial T \cap \Gamma)}^2\Bigr)^{1/2} 
 \\&
 \leq 
 \Bigl( \sum_{T \in \TT_k \cap \TT_{\ell}} \norm{h_k^{1/2} \nabla_\Gamma (1 - J_\ell)(g - V \phi_\ell^\exact)}_{L^2(\partial T \cap \Gamma)}^2 \Bigr)^{1/2} 
 \\& \quad
 + \Bigl( \sum_{T \in \TT_k \cap \TT_{\ell}} \norm{h_k^{1/2} \nabla_\Gamma \bigl( (1 - J_\ell)(g - V \phi_\ell^\exact) - (1 - J_k)(g - V \phi_k^\exact)}_{L^2(\partial T \cap \Gamma)}^2 \Bigr)^{1/2} 
 \\&
 \leq 
 \osc_{\ell}(\TT_k \cap \TT_{\ell}) + a_{\ell,k}.
\end{align*}
We recall the inverse estimates
\begin{align}\label{eq:invest:1/2}
 \norm{h_k^{1/2} \nabla_\Gamma v_k}_{L^2(\Gamma)}
 \lesssim \norm{v_k}_{H^{1/2}(\Gamma)}
 \quad \text{for all } v_k \in \SS^q(\TT_k^\Gamma)
\end{align}
as well as 
\begin{align}\label{eq:invest:V}
 \norm{h_k^{1/2} \nabla_\Gamma V \psi_k}_{L^2(\Gamma)}
 \lesssim \norm{\psi_k}_{H^{-1/2}(\Gamma)}
 \quad \text{for all } \psi_k \in \PP^p(\TT_k^\Gamma),
\end{align}
where the hidden constants depend only on the polynomial degrees $p \in \N_0$, $q \in \N$, and $\kappa$-shape regularity of $\TT_k^\Gamma$; see~\cite[Proposition~5]{Aurada2015} and~\cite[Theorem~3.1]{Aurada2015a}, respectively.
With this and nestedness of the discrete spaces, we see that
\begin{align*}
 a_{\ell,k}
 &\le 
 \norm{h_k^{1/2} \nabla_\Gamma (J_{\ell}-J_k)g }_{L^2(\Gamma)}
 + 
 \norm{h_k^{1/2} \nabla_\Gamma ( V\phi_{\ell}^\exact - V\phi_k^\exact )}_{L^2(\Gamma)}
 \\& \quad
 + 
 \norm{h_k^{1/2} \nabla_\Gamma ( J_{\ell} V\phi_{\ell}^\exact - J_k V\phi_k^\exact )}_{L^2(\Gamma)}
 \\&
 \lesssim 
 \norm{(J_{\ell}-J_k)g }_{H^{1/2}(\Gamma)}
 + 
 \norm{\phi_{\ell}^\exact - \phi_k^\exact}_{H^{-1/2}(\Gamma)}
 + 
 \norm{J_{\ell} V\phi_{\ell}^\exact - J_k V\phi_k^\exact}_{H^{1/2}(\Gamma)}.
\end{align*}
Lemma~\ref{lem:SZconv} and continuity of the trace operator yield
\begin{equation*}
 \norm{(J_\ell-J_{k})g}_{H^{1/2}(\Gamma)} 
 \lesssim 
 \norm{(P_\ell - P_{k})u^\exact}_{H^1(\Omega)} 
 \xrightarrow{k, \ell \to \infty} 0.
\end{equation*}
Moreover, the $H^{1/2}(\Gamma)$-stability of $J_{\ell}$, Lemma~\ref{lem:SZconv} for $u_\infty = \widetilde V\phi_\infty^\exact$, and \textsl{a~priori} convergence~\eqref{eq:apriori:phi} together with continuity of $V$ yield
\begin{align*}
 &\norm{J_{\ell} V\phi_{\ell}^\exact - J_{\infty} V\phi_\infty^\exact}_{H^{1/2}(\Gamma)} 
 \leq 
 \norm{J_{\ell} V\phi_{\ell}^\exact - J_{\ell} V\phi_\infty^\exact}_{H^{1/2}(\Gamma)} 
 + 
 \norm{J_{\ell} V\phi_\infty^\exact - J_{\infty} V\phi_\infty^\exact}_{H^{1/2}(\Gamma)} 
 \\& \qquad
 \lesssim 
 \norm{V\phi_{\ell}^\exact - V\phi_\infty^\exact}_{H^{1/2}(\Gamma)}  
 + \norm{P_{\ell} u_{\infty} - P_{\infty} u_{\infty}}_{H^{1}(\Omega)} 
  \xrightarrow{\ell \to \infty} 0.
\end{align*}
Altogether, we thus conclude that $a_{\ell,k} \to 0$ as $k,\ell \to \infty$.

\textbf{Step~2 (Reduction of $\boldsymbol{\osc_\ell}$).}
We show that, for all $k, \ell \in \N_0$ with $k > \ell$,
\begin{align}\label{eq:reduction:osc}
 \osc_{k}(\TT_k \setminus  \TT_{\ell})
 \le 
 \const{\rho}{red} \, \osc_{\ell}(\TT_{\ell} \setminus \TT_k) + a_{\ell,k}
 \quad \text{with} \quad
 \const{\rho}{red} \coloneqq 2^{-1/(2d)}.
\end{align}
To this end, we note that $\bigcup (\TT_k \setminus \TT_{\ell}) = \bigcup (\TT_{\ell} \setminus \TT_k)$ and that for $T \in \TT_k \setminus \TT_{\ell}$ and $T' \in \TT_{\ell} \setminus \TT_k$ with $T \subset T'$, there holds $h_T \leq 2^{-1/d} h_{T'}$ due to bisection and $h_T = |T|^{1/d}$.
Arguing as for stability~\eqref{eq:stability:osc}, we obtain
\begin{align*}
 \osc_{k}(\TT_k \setminus  \TT_{\ell}) 
 &= 
 \Bigl(\sum_{T \in \TT_{k} \setminus \TT_{\ell}} h_T \norm{\nabla_\Gamma (1 - J_k)(g - V \phi_k^\exact)}_{L^2(\partial T \cap \Gamma)}^2\Bigr)^{1/2} 
 \\&
 = 
 \Bigl(\sum_{T' \in \TT_{\ell} \setminus \TT_k} \sum_{\substack{T \in \TT_k \setminus \TT_{\ell} \\ T \subset T'}} h_T \norm{\nabla_\Gamma (1 - J_k)(g - V \phi_k^\exact)}_{L^2(\partial T \cap \Gamma)}^2\Bigr)^{1/2}
 \\&
 \leq 
 \Bigl(\sum_{T' \in \TT_{\ell} \setminus \TT_k} \sum_{\substack{T \in \TT_k \setminus \TT_{\ell} \\ T \subset T'}} h_T \norm{\nabla_\Gamma (1 - J_{\ell})(g - V \phi_{\ell}^\exact)}_{L^2(\partial T \cap \Gamma)}^2 
 \Bigr)^{1/2} + a_{\ell,k}
 \\&
 \leq 
 2^{-1/(2d)} \Bigl(\sum_{T' \in \TT_{\ell} \setminus \TT_k} h_{T'} \sum_{\substack{T \in \TT_k \setminus \TT_{\ell} \\ T \subset T'}}  \norm{\nabla_\Gamma (1 - J_{\ell})(g - V \phi_{\ell}^\exact)}_{L^2(\partial T \cap \Gamma)}^2 \Bigr)^{1/2} + a_{\ell,k} 
 \\&
 = 2^{-1/(2d)} \, \osc_{\ell}(\TT_{\ell} \setminus \TT_k) + a_{\ell,k}.
\end{align*}

\textbf{Step~3 (Convergence $\boldsymbol{\osc_{\ell_k}(\MM_{\ell_k}) \to 0}$ along a subsequence).}
With reduction~\eqref{eq:reduction:osc}, the arguments from~\cite[Proof of Theorem~3.1, step 1--2]{Gantner2021} prove that there exists a subsequence $(\osc_{\ell_k})_{k \in \N_0}$ such that
\begin{equation} \label{eq:eta2fineconv}
 0 
 \le 
 \osc_{\ell_k}(\MM_{\ell_k}) 
 \le 
 \osc_{\ell_k}(\TT_{\ell_k} \backslash \TT_{\ell_{k+1}}) 
 \xrightarrow{k \to \infty} 0.
\end{equation}

\textbf{Step~4 (Convergence $\boldsymbol{\max_{T \in \MM_{\ell}} \eta_{\ell}(T) \to 0}$ along the full sequence).}
For $\ell \in \N_0$, let $T_{\ell} \in \MM_{\ell}$ such that $\eta_{\ell}(T_{\ell}) = \max_{T \in \MM_{\ell}} \eta_{\ell}(T)$.
Since $\MM_{\ell} \subseteq \TT_{\ell} \setminus \TT_{\infty}$, \cite[Corollary~4.1]{Morin2008} proves
\begin{equation*}
 |T_{\ell}| 
 = 
 \norm{h_{\ell}^d}_{L^{\infty}(T_{\ell})} 
 \leq 
 \norm{h_{\ell}}_{L^{\infty}(\bigcup (\TT_{\ell} \setminus \TT_{\infty}))}^d \stackrel{\ell \rightarrow \infty}{\longrightarrow} 0.
\end{equation*}
By Proposition~\ref{prop:auxconv}, there exists $w_\infty^\exact \in H^1(\Omega)$ such that $\norm{w_\infty^\exact - w_\ell^\exact}_{H^1(\Omega)} \xrightarrow{\ell \to \infty} 0$. Hence, absolute continuity of the $H^1$-seminorm with respect to the measure $|\cdot|$ concludes
\begin{equation} \label{eq:eta1fineconv}
 \max_{T \in \MM_{\ell}}  \eta_{\ell}(T)^2 
 =
 \eta_{\ell}(T_{\ell})^2 
 = 
 \norm{\nabla w_\ell^\exact}_{L^2(T_{\ell})}^2 
 \lesssim 
 \norm{\nabla w_\infty^\exact}_{L^2(T_\ell)}^2 + \norm{\nabla(w_\infty^\exact - w_\ell^\exact)}_{L^2(\Omega)}^2
 \xrightarrow{\ell \to \infty} 0.
\end{equation}

\textbf{Step~5 (Convergence $\boldsymbol{\osc_\ell \to 0}$ along the full sequence).}
From Step~3--4, we obtain 
\begin{equation*}
 \max_{T \in \MM_{\ell_k}} \big[ \eta_{\ell_k}(T)^2 + \osc_{\ell_k}(T)^2 \big]
 \le \max_{T \in \MM_{\ell_k}} \eta_{\ell_k}(T_{\ell_k})^2 + \osc_{\ell_k}(\MM_{\ell_k})^2
 \xrightarrow{k \to \infty} 0.
\end{equation*}
For all $\ell' \leq \ell_k$ and every $T \in \TT_{\ell'} \cap \TT_{\infty} \subseteq \TT_{\ell_k} \setminus \MM_{\ell_k}$, the marking strategy~\eqref{eq:marking} yields that
\begin{equation} \label{eq:2mark}
 \begin{split}
 \osc_{\ell_k}(T) ^2
 &\leq 
 \max_{T \in \TT_{\ell_k} \setminus \MM_{\ell_k}} \big[ \eta_{\ell_k}(T)^2 + \osc_{\ell_k}(T)^2 \big] 
 \\
 &\eqreff{eq:marking}\leq 
 M\Bigl(\max_{T \in \MM_{\ell_k}} \big[ \eta_{\ell_k}(T)^2 + \osc_{\ell_k}(T)^2 \big] \Bigr) 
 \stackrel{k \rightarrow \infty}{\longrightarrow} 0.
 \end{split}
\end{equation}
Since $\TT_{\ell'} \cap \TT_\infty$ is finite, this proves
\begin{equation} \label{eq:eta2nfineconv}
 \lim_{k \rightarrow \infty} \osc_{\ell_k}(\TT_{\ell'} \cap \TT_{\infty}) = 0 
 \qquad \text{for all } \ell' \in \N_0,
\end{equation}	
which verifies the claim of~\cite[Proof of Theorem~3.1, step 3]{Gantner2021} in the present setting.
With stability~\eqref{eq:stability:osc} and reduction~\eqref{eq:reduction:osc}, the arguments from~\cite[Proof of Theorem~3.1, step 4--5]{Gantner2021} conclude that indeed
\begin{equation} \label{eq:eta2conv}
 \lim_{\ell \rightarrow \infty} \osc_{\ell} = 0.
\end{equation}

\textbf{Step~6 (Convergence $\boldsymbol{\eta_\ell(\TT_{\ell'} \cap \TT_\infty) \to 0}$ along the full sequence).}
Let $\ell, \ell' \in \N_0$ with $\ell' \le \ell$.
Since marked elements are refined, it holds that $\TT_{\ell'} \cap \TT_{\infty} \subseteq \TT_{\ell} \setminus \MM_{\ell}$.
For all $T \in \TT_{\ell'} \cap \TT_{\infty}$, the convergences~\eqref{eq:eta1fineconv} and~\eqref{eq:eta2conv} prove
\begin{align*}
 0 \le \max_{T \in \MM_{\ell}} \big[ \eta_{\ell}(T)^2 + \osc_{\ell}(T)^2 \big]
 \le \max_{T \in \MM_{\ell}} \eta_{\ell}(T)^2 + \osc_{\ell}^2
 \xrightarrow{\ell \to \infty} 0.
\end{align*}
Thus, the marking strategy~\eqref{eq:marking} yields
\begin{equation*}
 \eta_{\ell}(T)^2 
 \leq 
 \max_{T \in \TT_{\ell} \setminus \MM_{\ell}} \big[ \eta_{\ell}(T)^2 + \osc_{\ell}(T)^2 \big]
 \eqreff{eq:marking}\leq
  M\Big(\max_{T \in \MM_{\ell}} \big[ \eta_{\ell}(T)^2 + \osc_{\ell}(T)^2 \big] \Big)
  \stackrel{\ell \rightarrow \infty}{\longrightarrow} 0.
\end{equation*}
Since $\TT_{\ell'} \cap \TT_\infty$ is finite, this gives
\begin{equation*}
 \lim_{\ell' \leq \ell \rightarrow \infty} \eta_{\ell}(\TT_{\ell'} \cap \TT_\infty) = 0
\end{equation*}
and concludes the proof.
\end{proof}

\begin{lemma}[Strong reliability] \label{lem:strongrel}
For all $\psi \in L^2(\Gamma)$ and $\ell \in \N_0$, there holds
\begin{equation}\label{eq:lem:strongrel}
 |\dual{V\phi^\exact - V\phi_{\ell}^\exact}{\psi}_{H^{1/2}(\Gamma) \times H^{-1/2}(\Gamma)}| 
 \le 
 \const{C}{rel}
 \Big( \osc_\ell \, \norm{\psi}_{L^2(\Gamma)} +
 \sum_{T \in \TT_{\ell}} \eta_{\ell}(T) \norm{\psi}_{L^2(\partial T \cap \Gamma)} \Big).
\end{equation}
The constant $\const{C}{rel} > 0$ depends only on $d$, $\Gamma$, and $\kappa$-shape regularity of $\TT_\ell$.
\end{lemma}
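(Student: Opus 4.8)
The plan is to express the boundary residual $V\phi^\exact - V\phi_\ell^\exact = g - V\phi_\ell^\exact$ (using $V\phi^\exact = g$ on $\Gamma$) through the trace of the auxiliary solution $w_\ell^\exact$ and the Scott--Zhang remainder, and then to estimate the two pieces separately. Since $w_\ell^\exact|_\Gamma = g_\ell = J_\ell(g - V\phi_\ell^\exact)$ by~\eqref{eq:defwh}, we have on $\Gamma$ the splitting $g - V\phi_\ell^\exact = (1-J_\ell)(g - V\phi_\ell^\exact) + w_\ell^\exact|_\Gamma$. Pairing with $\psi \in L^2(\Gamma)$ (so that the duality pairing reduces to the $L^2(\Gamma)$-inner product) and using the triangle inequality, it remains to bound $|\dual{(1-J_\ell)(g - V\phi_\ell^\exact)}{\psi}_\Gamma|$ and $|\dual{w_\ell^\exact|_\Gamma}{\psi}_\Gamma|$.

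For the first term, I would combine the Cauchy--Schwarz inequality in $L^2(\Gamma)$, the trivial bound $\|\cdot\|_{L^2(\Gamma)} \le \|\cdot\|_{H^{1/2}(\Gamma)}$, and the Poincar\'e-type estimate~\eqref{eq:rel} to obtain $|\dual{(1-J_\ell)(g - V\phi_\ell^\exact)}{\psi}_\Gamma| \le \|(1-J_\ell)(g - V\phi_\ell^\exact)\|_{L^2(\Gamma)}\,\|\psi\|_{L^2(\Gamma)} \le \const{C}{osc}\,\osc_\ell\,\|\psi\|_{L^2(\Gamma)}$, which is already the first contribution in~\eqref{eq:lem:strongrel}.

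The heart of the matter is the trace term. I would split it over the boundary facets, $|\dual{w_\ell^\exact|_\Gamma}{\psi}_\Gamma| \le \sum_{F \in \TT_\ell^\Gamma}\|w_\ell^\exact\|_{L^2(F)}\|\psi\|_{L^2(F)}$, and establish for each facet $F = \partial T_F \cap \Gamma$ the local estimate $\|w_\ell^\exact\|_{L^2(F)} \lesssim h_F^{1/2}\,\|\nabla w_\ell^\exact\|_{L^2(\omega_F)}$, where $\omega_F \subseteq \omega_\ell$ is a ``column'' built from a uniformly bounded number of elements that joins $F$ to the complementary boundary $\Gamma_\ell^\complement$ and has diameter $\simeq h_F$. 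The estimate follows from the scaled trace inequality on $T_F$ combined with a Friedrichs inequality on $\omega_F$ that uses $w_\ell^\exact|_{\Gamma_\ell^\complement} = 0$, where the Friedrichs constant is $\simeq h_F$ since the strip depth $k$ is fixed and $\kappa$-shape regularity keeps the mesh sizes in $\omega_F$ comparable to $h_F$. Because $h_F \le \diam(\Omega) \lesssim 1$, this gives $\|w_\ell^\exact\|_{L^2(F)} \lesssim \|\nabla w_\ell^\exact\|_{L^2(\omega_F)} \le \sum_{T' \subseteq \omega_F} \eta_\ell(T')$; summing over $F$ and interchanging the order of summation — each element lies in only boundedly many columns — recasts the facet sum into $\sum_{T \in \TT_\ell}\eta_\ell(T)\,\|\psi\|_{L^2(\partial T \cap \Gamma)}$ up to a $\kappa$-dependent factor, and combining with the oscillation term yields~\eqref{eq:lem:strongrel}. (It is instructive to first split $\psi = (\psi - Q_\ell^\Gamma\psi) + Q_\ell^\Gamma\psi$: the mean-free part pairs with $w_\ell^\exact|_\Gamma - Q_\ell^\Gamma(w_\ell^\exact|_\Gamma)$ and is controlled \emph{directly} by $\eta_\ell(T_F)\,\|\psi\|_{L^2(F)}$ via the projection estimate~\eqref{eq:avgprop} with $s=1$ and an inverse trace estimate on $T_F$; only the piecewise-constant part $Q_\ell^\Gamma\psi$, i.e.\ the mean of $w_\ell^\exact$ on each facet, genuinely requires passing through the strip to $\Gamma_\ell^\complement$.)

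I expect the main obstacle to be exactly this last localization: one must construct the subdomains $\omega_F$ so that they actually reach $\Gamma_\ell^\complement$ — which forces one to unravel the patch definition $\omega_\ell = \Omega_\ell^{[k]}[\Gamma]$ — with Friedrichs constants bounded uniformly in $\ell$, and then organize the finite-overlap bookkeeping so that the resulting bound carries the elementwise form of the claim. The decisive structural fact, and the reason the constants do not degenerate even though $|\omega_\ell| \to 0$, is that locally above each boundary facet the adaptively refined strip has width proportional to the local mesh size $h_F$ (hence $\lesssim 1$), so that all local trace and Poincar\'e/Friedrichs constants stay uniformly bounded. A cheaper, non-localized bound $|\dual{w_\ell^\exact|_\Gamma}{\psi}_\Gamma| \lesssim \eta_\ell\,\|\psi\|_{L^2(\Gamma)}$ follows immediately from Cauchy--Schwarz over the facets together with the finite overlap of the $\omega_F$; obtaining the sharper, elementwise right-hand side is the actual content of the lemma.
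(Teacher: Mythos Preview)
Your column argument does not deliver the bound as stated. After interchanging sums you obtain
\[
\sum_{T'\in\TT_\ell^\omega}\eta_\ell(T')\sum_{F:\,T'\subseteq\omega_F}\norm{\psi}_{L^2(F)},
\]
and for any interior strip element $T'$ (one with no facet on $\Gamma$) the inner sum is nonzero while $\norm{\psi}_{L^2(\partial T'\cap\Gamma)}=0$; the claimed recasting into $\sum_{T\in\TT_\ell}\eta_\ell(T)\,\norm{\psi}_{L^2(\partial T\cap\Gamma)}$ therefore fails. What you actually get is a structurally different estimate in which every element of the strip contributes, not only the boundary-touching ones.

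The missing idea is that Galerkin orthogonality should be invoked \emph{before} the $J_\ell/(1-J_\ell)$ splitting, not after. Since $Q_\ell^\Gamma\psi\in\PP^0(\TT_\ell^\Gamma)\subseteq\PP^p(\TT_\ell^\Gamma)$, equation~\eqref{eq:weakdisc} gives $\dual{g-V\phi_\ell^\exact}{Q_\ell^\Gamma\psi}_\Gamma=0$, whence
\[
\dual{g-V\phi_\ell^\exact}{\psi}_\Gamma
=\dual{(1-Q_\ell^\Gamma)(g-V\phi_\ell^\exact)}{\psi}_\Gamma.
\]
Now split $(1-Q_\ell^\Gamma)(g-V\phi_\ell^\exact)$ into its $J_\ell$ and $(1-J_\ell)$ parts. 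The $(1-J_\ell)$ part is handled as you describe. For the $J_\ell$ part one has $J_\ell(g-V\phi_\ell^\exact)=w_\ell^\exact|_\Gamma$, and on each facet $F=\partial T_F\cap\Gamma$ the best-approximation property of $Q_\ell^\Gamma$, the scaled trace inequality, and a Poincar\'e estimate on the \emph{single} element $T_F$ give
\[
\norm{(1-Q_\ell^\Gamma)w_\ell^\exact}_{L^2(F)}
\le\norm{(1-Q_\ell^\Omega)w_\ell^\exact}_{L^2(F)}
\lesssim\norm{\nabla w_\ell^\exact}_{L^2(T_F)}=\eta_\ell(T_F),
\]
which is exactly the localized right-hand side of~\eqref{eq:lem:strongrel}. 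No column to $\Gamma_\ell^\complement$ is needed; the zero boundary condition there plays no role in this lemma. Your parenthetical comes close---you correctly observe that the mean-free part is controlled on $T_F$ alone---but you apply the $Q_\ell^\Gamma$-split to $\dual{w_\ell^\exact|_\Gamma}{\psi}_\Gamma$ rather than to $\dual{g-V\phi_\ell^\exact}{\psi}_\Gamma$, and thereby miss that Galerkin orthogonality annihilates the piecewise-constant part of $\psi$ outright.
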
 

\begin{proof}
The proof is split into three steps.

\textbf{Step~1:} 
Let $Q_\ell^\Gamma \colon L^2(\Gamma) \to \PP^0(\TT_\ell^\Gamma)$ and $Q_\ell^\Omega \colon L^2(\Omega) \to \PP^0(\TT_\ell)$ be the integral mean operators, i.e., the $L^2$-orthogonal projections onto the piecewise constants. For $F \in \TT_\ell^\Gamma$ and $T_F \in \TT_\ell$ with $F \subset \partial T$, the best approximation property of $Q_\ell^\Gamma$ with respect to $\norm{\cdot}_{L^2(F)}$ yields
\begin{align*}
 \norm{(1-Q_\ell^\Gamma) J_{\ell}(g - V\phi_{\ell}^\exact)}_{L^2(F)}
 = \norm{(1-Q_\ell^\Gamma) w_\ell^\exact}_{L^2(F)}
 \le \norm{(1-Q_\ell^\Omega) w_\ell^\exact}_{L^2(F)}.
\end{align*}
Moreover, the trace inequality and a Poincar\'e inequality prove
\begin{align*}
 \begin{split}
 \norm{(1-Q_\ell^\Omega) w_\ell^\exact}_{L^2(F)} 
 &\lesssim 
 h_{T_F}^{-1} \norm{(1-Q_\ell^\Omega) w_\ell^\exact}_{L^2(T_F)} + \norm{\nabla (1-Q_\ell^\Omega) w_\ell^\exact}_{L^2(T_F)}
 \\&
 \lesssim 
 \norm{\nabla w_\ell^\exact}_{L^2(T_F)}
 =
 \eta_\ell(T_F),
 \end{split}
\end{align*}%
where the hidden constants depend only on $d$ and $\kappa$-shape regularity of $\TT_\ell$.

\textbf{Step~2:} With $Q_\ell^\Gamma$ from Step~1 and the local $L^2$-approximation property of the projector $J_{\ell}$, it holds that
\begin{align*}
 \begin{split}
 \norm{(1-Q_\ell^\Gamma)(1-J_{\ell})(g - V\phi_{\ell}^\exact)}_{L^2(\Gamma)}
 &\le 
 \norm{(1-J_{\ell})(g - V\phi_{\ell}^\exact)}_{L^2(\Gamma)}
 \\&
 \lesssim 
 \norm{h_{\ell} \nabla_\Gamma (1-J_{\ell})(g-V\phi_{\ell}^\exact)}_{L^2(\Gamma)}
 \lesssim \osc_{\ell};
 \end{split}
\end{align*}
see, e.g.,~\cite[Lemma~3]{Aurada2015}. The hidden constant depends only on $p$, $\Gamma$, and $\kappa$-shape regularity of $\TT_\ell^\Gamma$.

\textbf{Step~3:}
Let $\psi \in L^2(\Gamma)$. Since the dual pairing in $H^{1/2}(\Gamma) \times H^{-1/2}(\Gamma)$ extends the $L^2(\Gamma)$-inner product and $\eta_{\ell}(T) = 0$ for all $T \in \TT_{\ell} \setminus \TT_{\ell}^{\omega}$, the Galerkin orthogonality and the symmetry of the $L^2$-orthogonal projection prove
\begin{align*}
 &\dual{V\phi^\exact - V\phi_{\ell}^\exact}{\psi}_{H^{1/2}(\Gamma) \times H^{-1/2}(\Gamma)} 
 = \dual{V\phi^\exact - V\phi_{\ell}^\exact}{(1-Q_\ell^\Gamma)\psi}_{H^{1/2}(\Gamma) \times H^{-1/2}(\Gamma)}
 \\& \quad 
 = \dual{(1-Q_\ell^\Gamma)(g - V\phi_{\ell}^\exact)}{\psi}_{H^{1/2}(\Gamma) \times H^{-1/2}(\Gamma)}
 \\& \quad 
 \leq 
 \sum_{F \in \TT_\ell^\Gamma} \big( \norm{(1-Q_\ell^\Gamma)J_{\ell}(g - V\phi_{\ell}^\exact)}_{L^2(F)} + \norm{(1-Q_\ell^\Gamma)(1-J_{\ell})(g - V\phi_{\ell}^\exact)}_{L^2(F)} \big) \norm{\psi}_{L^2(F)} 
 \\& \quad  
 \le \Big( \sum_{F \in \TT_\ell^\Gamma} \norm{(1-Q_\ell^\Gamma)w_\ell^\exact}_{L^2(F)} \norm{\psi}_{L^2(F)} \Big)
 + \norm{(1-Q_\ell^\Gamma)(1-J_{\ell})(g - V\phi_{\ell}^\exact)}_{L^2(\Gamma)}  \norm{\psi}_{L^2(\Gamma)} 
 \\& \quad  
 \lesssim \Big( \sum_{F \in \TT_\ell^\Gamma} \eta_{\ell}(T_F) \norm{\psi}_{L^2(F)} \Big) + \osc_\ell \, \norm{\psi}_{L^2(\Gamma)}
 \le 
 \Big( \sum_{T \in \TT_\ell} \eta_{\ell}(T) \norm{\psi}_{L^2(T \cap \Gamma)} \Big) + \osc_\ell \, \norm{\psi}_{L^2(\Gamma)}.
\end{align*}
This concludes the proof.
\end{proof}

By means of the foregoing lemma, we are able to adapt some arguments of~\cite{Siebert2011} to the present setting in order to prove $\norm{\phi^\exact - \phi_\ell^\exact}_{H^{-1/2}(\Gamma)} \to 0$.

\begin{proposition}\label{prop:convergence:norm}
Algorithm~\ref{algo:adap} guarantees that
\begin{equation}\label{eq:prop:convergence:norm}
 \lim_{\ell \rightarrow \infty} \norm{\phi^\exact - \phi_{\ell}^\exact}_{H^{-1/2}(\Gamma)} = 0.
\end{equation}
\end{proposition}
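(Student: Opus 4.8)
The plan is to combine the \emph{a~priori} convergence of the Galerkin densities with the strong reliability estimate of Lemma~\ref{lem:strongrel} and the convergences of Proposition~\ref{thm:eta2conv}, in the spirit of the plain-convergence arguments of~\cite{Morin2008,Siebert2011,Gantner2021}. First I would note that, by Lemma~\ref{lemma:apriori} applied to $H=H^{-1/2}(\Gamma)$ and $X_\ell=\PP^p(\TT_\ell^\Gamma)$, there is a limit $\phi_\infty^\exact\in H^{-1/2}(\Gamma)$ with $\phi_\ell^\exact\to\phi_\infty^\exact$ in $H^{-1/2}(\Gamma)$ (this is~\eqref{eq:apriori:phi}). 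Since $V\colon H^{-1/2}(\Gamma)\to H^{1/2}(\Gamma)$ is an isomorphism and $L^2(\Gamma)$ is dense in $H^{-1/2}(\Gamma)$ while $V\phi^\exact-V\phi_\infty^\exact\in H^{1/2}(\Gamma)$, the claim~\eqref{eq:prop:convergence:norm} is equivalent to $\dual{V\phi^\exact-V\phi_\infty^\exact}{\psi}_\Gamma=0$ for every $\psi\in L^2(\Gamma)$. Moreover, by~\eqref{eq:apriori:phi} and continuity of $V$, $\dual{V\phi^\exact-V\phi_\infty^\exact}{\psi}_\Gamma=\lim_{\ell\to\infty}\dual{V\phi^\exact-V\phi_\ell^\exact}{\psi}_\Gamma$. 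Hence the whole task reduces to showing $\dual{V\phi^\exact-V\phi_\ell^\exact}{\psi}_\Gamma\to0$ for fixed $\psi\in L^2(\Gamma)$, and the Theorem then follows because efficiency (Proposition~\ref{prop:efficiency}) gives $\eta_\ell\le\const{C}{eff}\norm{\phi^\exact-\phi_\ell^\exact}_{H^{-1/2}(\Gamma)}\to0$.

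To bound $\dual{V\phi^\exact-V\phi_\ell^\exact}{\psi}_\Gamma$ I would revisit the proof of Lemma~\ref{lem:strongrel}, keeping track of the local element sizes that the trace inequality and the Poincar\'e inequality in its Step~1 produce, to obtain the slightly sharper estimate
\begin{equation*}
 |\dual{V\phi^\exact-V\phi_\ell^\exact}{\psi}_\Gamma|
 \lesssim
 \osc_\ell\,\norm{\psi}_{L^2(\Gamma)}
 + \sum_{T\in\TT_\ell} h_T^{1/2}\,\eta_\ell(T)\,\norm{\psi}_{L^2(\partial T\cap\Gamma)}.
\end{equation*}
The first summand tends to $0$ by Proposition~\ref{thm:eta2conv}. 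For the second, fix $\ell'\in\N_0$; for $\ell\ge\ell'$, each $T\in\TT_\ell$ has a unique $\TT_{\ell'}$-ancestor $T'$, and I would split $\TT_\ell$ accordingly: (a) $T'\in\TT_\infty$, forcing $T=T'\in\TT_{\ell'}\cap\TT_\infty$ (``old-stable''); (b) $T'\in\TT_{\ell'}\setminus\TT_\infty$, so $T\subseteq\mathcal U_{\ell'}\coloneqq\bigcup(\TT_{\ell'}\setminus\TT_\infty)$, split once more into (b1) $T\notin\TT_\infty$ and (b2) $T\in\TT_\infty$. Throughout I would use Cauchy--Schwarz over the facets, $\sum_{T\in\TT_\ell}\eta_\ell(T)^2=\eta_\ell^2\lesssim1$ (bounded, by Proposition~\ref{prop:auxconv}), and $h_T\le\max_{T_0\in\TT_0}h_{T_0}$.

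The three pieces are dispatched as follows. Piece (a): since $\TT_{\ell'}\cap\TT_\infty$ is finite and $\eta_\ell(\TT_{\ell'}\cap\TT_\infty)\to0$ as $\ell\to\infty$ by Proposition~\ref{thm:eta2conv}, this contribution is $\lesssim\eta_\ell(\TT_{\ell'}\cap\TT_\infty)\norm{\psi}_{L^2(\Gamma)}\to0$. Piece (b1): here $h_T^{1/2}\le\norm{h_\ell}_{L^\infty(\bigcup(\TT_\ell\setminus\TT_\infty))}^{1/2}\to0$ by~\cite[Corollary~4.1]{Morin2008}, so the contribution is $\lesssim\norm{h_\ell}_{L^\infty(\bigcup(\TT_\ell\setminus\TT_\infty))}^{1/2}\,\eta_\ell\,\norm{\psi}_{L^2(\Gamma)}\to0$. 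Piece (b2): here $h_T$ is only bounded, but the facets $\partial T\cap\Gamma$ lie in $S_{\ell'}\coloneqq\mathcal U_{\ell'}\cap\bigl(\bigcup\TT_\infty\bigr)\cap\Gamma$, so the contribution is $\lesssim\eta_\ell\,\norm{\psi}_{L^2(S_{\ell'})}\lesssim\norm{\psi}_{L^2(S_{\ell'})}$; since $(\mathcal U_{\ell'})_{\ell'}$ is decreasing, $S_{\ell'}$ decreases, and any point in the relative interior of a boundary facet of a permanently unrefined element eventually leaves every $\overline{\mathcal U_{\ell'}}$, so $\bigcap_{\ell'}S_{\ell'}$ is contained in the (countable, $(d-2)$-dimensional) union of boundaries of the final boundary facets and thus has vanishing $(d-1)$-measure; dominated convergence gives $\norm{\psi}_{L^2(S_{\ell'})}\to0$ as $\ell'\to\infty$. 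Letting first $\ell\to\infty$ and then $\ell'\to\infty$ yields $\limsup_{\ell\to\infty}|\dual{V\phi^\exact-V\phi_\ell^\exact}{\psi}_\Gamma|=0$, which completes the proof as described in the first paragraph.

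The hard part is piece (b2): elements that are created deep inside a region which, at an early level $\ell'$, is destined to be refined, yet which themselves are never refined again. For such elements neither $h_T$ nor the indicator $\eta_\ell(T)=\norm{\nabla w_\ell^\exact}_{L^2(T)}$ can be shown to be small, so they cannot be absorbed by the estimator convergences alone and must instead be controlled \emph{geometrically}, by proving that the part of $\Gamma$ they occupy shrinks to a null set. This is precisely what forces the Morin--Siebert--Veeser-type bookkeeping around $\TT_\infty$, together with the $h$-weighted refinement of Lemma~\ref{lem:strongrel}: a crude application of Lemma~\ref{lem:strongrel} as literally stated would leave behind the quantity $\norm{\nabla w_\infty^\exact}_{L^2(\mathcal R)}$, where $\mathcal R=\bigcap_{\ell'}\overline{\mathcal U_{\ell'}}$ is the infinitely refined region, and there is no independent reason for $\nabla w_\infty^\exact$ to vanish on $\mathcal R$ until the present argument has been carried out.
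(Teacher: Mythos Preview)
Your proof is correct, and the overall strategy---reduce to showing $\dual{V\phi^\exact-V\phi_\ell^\exact}{\psi}_\Gamma\to0$ for a dense class of test functions, then split $\TT_\ell$ according to $\TT_\infty$---matches the paper's. Two differences are worth pointing out.

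First, you test with $\psi\in L^2(\Gamma)$ and extract an $h_T^{1/2}$ factor by sharpening the trace inequality inside Step~1 of Lemma~\ref{lem:strongrel}. The paper instead tests with $\psi\in H^1(\Gamma)$ (equally dense in $H^{-1/2}(\Gamma)$), applies Lemma~\ref{lem:strongrel} as written to $\psi-Q_\ell^\Gamma\psi$ via Galerkin orthogonality, and then uses the Poincar\'e estimate $\norm{\psi-Q_\ell^\Gamma\psi}_{L^2(F)}\lesssim h_F\norm{\nabla_\Gamma\psi}_{L^2(F)}$ to gain a full $h_T$. Both routes work; the paper's avoids re-opening the lemma.

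Second---and this is where your argument takes a genuine detour---your treatment of piece~(b2) is valid but unnecessary, and the closing paragraph contains a misconception. You assert that for $T$ in case~(b2) ``neither $h_T$ nor the indicator $\eta_\ell(T)$ can be shown to be small''. In fact $h_T$ \emph{is} small in the relevant sense: since the $\TT_{\ell'}$-ancestor $T'$ of $T$ lies in $\TT_{\ell'}\setminus\TT_\infty$, one has
\[
 h_T \le h_{T'} \le \norm{h_{\ell'}}_{L^\infty(\bigcup(\TT_{\ell'}\setminus\TT_\infty))},
\]
and the right-hand side tends to $0$ as $\ell'\to\infty$ by~\cite[Corollary~4.1]{Morin2008}. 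This single observation handles (b1) and (b2) simultaneously and is exactly how the paper dispatches the entire piece~(b) in its Step~3, without any further subdivision. Your measure-theoretic argument about $S_{\ell'}$ is a correct alternative, but the ``hard part'' you identify is not actually hard.
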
 

\begin{proof}
The proof is split into four steps.

\textbf{Step~1.} 
Since $\phi_{\ell}^\exact$ converges strongly in $H^{-1/2}(\Gamma)$ to $\phi_\infty^\exact$, it remains to show that $\phi_\infty^\exact = \phi^\exact$. To this end, it suffices to show weak convergence $\phi_\ell^\exact \rightharpoonup \phi^\exact$ in $H^{-1/2}(\Gamma)$. Since $V$ is an isomorphism and $H^1(\Gamma)$ is densely contained in $H^{-1/2}(\Gamma)$, this is equivalent to 
\begin{equation*}
 \dual{V\phi^\exact - V\phi_{\ell}^\exact}{\psi}_{H^{1/2}(\Gamma) \times H^{-1/2}(\Gamma)} \xrightarrow{\ell \to \infty} 0
 \qquad \text{for all } \psi \in H^1(\Gamma).
\end{equation*}
Let $\psi \in H^1(\Gamma)$.
Let $Q_{\ell}^\Gamma \colon L^2(\Gamma) \rightarrow \PP^0(\TT_\ell^\Gamma)$ be the integral mean operator.
Then, Galerkin orthogonality, Lemma~\ref{lem:strongrel}, and a Poincar\'e-type inequality prove that
\begin{align}\label{eq:step0}
 \begin{split}
 &|\dual{V\phi^\exact - V\phi_{\ell}^\exact}{\psi}_{H^{1/2}(\Gamma) \times H^{-1/2}(\Gamma)}| 
 = |\dual{V\phi^\exact - V\phi_{\ell}^\exact}{\psi-Q_{\ell}^\Gamma\psi}_{H^{1/2}(\Gamma) \times H^{-1/2}(\Gamma)}| 
 \\& \quad
 \eqreff{eq:lem:strongrel}\lesssim 
 \osc_\ell \, \norm{\psi-Q_{\ell}^\Gamma\psi}_{L^2(\Gamma)} 
 + 
 \sum_{T \in \TT_\ell} \eta_{\ell}(T) \, \norm{\psi-Q_{\ell}^\Gamma\psi}_{L^2(\partial T \cap \Gamma)} 
 \\& \quad
 \lesssim
 \osc_\ell \, \norm{\psi}_{L^2(\Gamma)} 
 +
 \sum_{T \in \TT_\ell} h_T \, \eta_{\ell}(T) \, \norm{\nabla_\Gamma \psi}_{L^2(\partial T \cap \Gamma)}.
 \end{split}
\end{align}
Recall that $\osc_\ell \to 0$ as $\ell \to \infty$ according to Proposition~\ref{thm:eta2conv}. It therefore remains to prove that also the sum converges to zero as $\ell \to \infty$. To this end, let $\ell' \in \N_0$ and $\ell' \le \ell$. We split the sum into the two sums 
\begin{align}\label{eq:step1}
\begin{split}
 \sum_{T \in \TT_\ell} h_T \, \eta_{\ell}(T) \, \norm{\nabla_\Gamma \psi}_{L^2(\partial T \cap \Gamma)}
 &= \sum_{T \in \TT_{\ell'} \cap \TT_\infty} h_T \, \eta_{\ell}(T) \, \norm{\nabla_\Gamma \psi}_{L^2(\partial T \cap \Gamma)}
 \\& \quad
 + \sum_{T \in \TT_\ell \backslash (\TT_{\ell'} \cap \TT_\infty)} h_T \, \eta_{\ell}(T) \, \norm{\nabla_\Gamma \psi}_{L^2(\partial T \cap \Gamma)}
\end{split}
\end{align}
and we consider each of these sums individually.

\textbf{Step~2.}
We consider the first sum on the right-hand side of~\eqref{eq:step1}.
Together with Proposition~\ref{thm:eta2conv}, the Cauchy--Schwarz inequality proves that
\begin{align}\label{eq:step2}
 \nonumber
 \sum_{T \in \TT_{\ell'} \cap \TT_\infty} 
 h_T \, \eta_{\ell}(T) \, \norm{\nabla_\Gamma \psi}_{L^2(\partial T \cap \Gamma)}
 &\le 
 \norm{h_\ell}_{L^\infty(\Omega)} \, \norm{\nabla_\Gamma \psi}_{L^2(\Gamma)} \, \eta_\ell(\TT_{\ell'} \cap \TT_\infty)
 \\&
 \lesssim 
 \eta_\ell(\TT_{\ell'} \cap \TT_\infty)
 \xrightarrow{\ell \to \infty} 0.
\end{align} 

\textbf{Step~3.}
We consider the second sum on the right-hand side of~\eqref{eq:step1}.
To show convergence, we first prove that $\bigcup[\TT_\ell \backslash (\TT_{\ell'} \cap \TT_\infty)] \subseteq \bigcup(\TT_{\ell'} \backslash \TT_\infty)$. To this end, let $T \in \TT_\ell \backslash (\TT_{\ell'} \cap \TT_\infty)$ and recall that $\ell \ge \ell'$. If $T \in \TT_{\ell'}$, then $T \in \TT_{\ell'} \backslash \TT_\infty$. If $T \not\in \TT_{\ell'}$, then there exists $T' \in \TT_{\ell'}$ with $T \subsetneqq T'$. This implies that $T' \not\in \TT_\infty$ and hence $T \subsetneqq T' \in \TT_{\ell'} \backslash \TT_\infty$. Overall, we confirm that 
\begin{align*}
 \bigcup(\TT_\ell \backslash (\TT_{\ell'} \cap \TT_\infty)) 
 \subseteq \bigcup(\TT_{\ell'} \backslash \TT_\infty)
\end{align*}
Second, we note that $h_\ell \le h_{\ell'}$ pointwise almost everywhere. 
By efficiency~\eqref{eq:efficiency} from Proposition~\ref{prop:efficiency} and the Céa lemma~\eqref{eq:cea}, it holds that 
\begin{equation*}
 \eta_\ell 
 = 
 \norm{\nabla w_\ell^\exact}_{L^2(\Omega)} 
 \eqreff{eq:efficiency}{\lesssim} 
 \norm{\phi^\exact - \phi_\ell^\exact}_{H^{-1/2}(\Gamma)} 
 \eqreff{eq:cea}{\lesssim} 
 \norm{\phi^\exact}_{H^{-1/2}(\Gamma)}.
\end{equation*} 

Hence, the convergence result from~\cite[Corollary~4.1]{Morin2008} leads to
\begin{align}\label{eq:step3}
 \nonumber
 &\sum_{T \in \TT_\ell \backslash (\TT_{\ell'} \cap \TT_\infty)} 
 h_T \, \eta_{\ell}(T) \, \norm{\nabla_\Gamma \psi}_{L^2(\partial T \cap \Gamma)}
 \le
 \norm{h_\ell}_{L^\infty(\bigcup[\TT_\ell \backslash (\TT_{\ell'} \cap \TT_\infty)])} \, \norm{\nabla_\Gamma \psi}_{L^2(\Gamma)} \, 
 \eta_\ell
 \\& \qquad 
 \lesssim
 \norm{h_\ell}_{L^\infty(\bigcup[\TT_\ell \backslash (\TT_{\ell'} \cap \TT_\infty)])}
 \le 
 \norm{h_{\ell'}}_{L^\infty(\bigcup(\TT_{\ell'} \backslash \TT_\infty))}
 \xrightarrow{\ell' \to \infty} 0.
\end{align}

\textbf{Step~4.} 
To formally conclude the proof, let $\eps > 0$. Recall from~\eqref{eq:step0}--\eqref{eq:step3} that
\begin{align*}
 &|\dual{V\phi^\exact - V\phi_{\ell}^\exact}{\psi}_{H^{1/2}(\Gamma) \times H^{-1/2}(\Gamma)}|
 \\& \quad
 \le 
 C \, \Big( 
 \norm{\psi}_{L^2(\Gamma)} \, \osc_\ell
 + \norm{\nabla_\Gamma \psi}_{L^2(\Gamma)} \, \eta_\ell(\TT_{\ell'} \cap \TT_\infty)
 + \norm{\nabla_\Gamma \psi}_{L^2(\Gamma)} \, \norm{h_{\ell'}}_{L^\infty(\bigcup(\TT_{\ell'} \backslash \TT_\infty))} \Big).
\end{align*}
We then choose $\ell' \in \N_0$ such that $C \, \norm{\nabla_\Gamma \psi}_{L^2(\Gamma)} \, \norm{h_{\ell'}}_{L^\infty(\bigcup(\TT_{\ell'} \backslash \TT_\infty))} \le \eps$. Finally, we may choose $\ell_0 \ge \ell'$ such that
$C \, \big[ 
 \norm{\psi}_{L^2(\Gamma)} \, \osc_\ell
 + \norm{\nabla_\Gamma \psi}_{L^2(\Gamma)} \, \eta_\ell(\TT_{\ell'} \cap \TT_\infty) \big]
 \le \eps$  for all $\ell \ge \ell_0$. Overall, we reveal that 
\begin{align*}
 |\dual{V\phi^\exact - V\phi_{\ell}^\exact}{\psi}_{H^{1/2}(\Gamma) \times H^{-1/2}(\Gamma)}| \le 2 \, \eps
 \quad \text{for all } \ell \ge \ell_0
\end{align*} 
 and hence conclude the proof.
\end{proof}

Finally, we are in the position to conclude also the proof of Theorem~\ref{theorem:mainresult}.

\begin{proof}[{\bfseries Proof of Theorem~\ref{theorem:mainresult}}]
The lower estimate in~\eqref{eq:theorem:mainresult} is the efficiency~\eqref{eq:efficiency} from Proposition~\ref{prop:efficiency}. The upper estimate in~\eqref{eq:theorem:mainresult} follows from the continuity of the single-layer potential $\widetilde{V} \colon H^{-1/2}(\Gamma) \to H^{1/2}(\Gamma)$ with $u^\exact - u^\exact_\ell = \widetilde{V}(\phi^\exact - \phi_\ell^\exact)$. According to 
\begin{equation*}
 (u^\exact - u^\exact_\ell)|_{\Gamma} 
 = 
 \bigl(\widetilde{V}(\phi^\exact - \phi^\exact_\ell)\bigr)|_{\Gamma} 
 = 
 V(\phi^\exact - \phi^\exact_\ell)
 \eqreff{eq:bie}{=}
 g - V\phi_\ell^\exact
\end{equation*}
and hence
\begin{equation*}
 \int_\Gamma (u^\exact - u_\ell^\exact) \d x
 \eqreff{eq:bie}{=}
 \dual{g - V\phi^\exact_\ell}{1}_\Gamma
 \eqreff{eq:weakdisc}{=}
 0,
\end{equation*}
a Poincaré-type inequality proves
\begin{equation*}
 \norm{u^\exact - u^\exact_\ell}_{H^1(\Omega)}
 \simeq 
 \norm{\nabla(u^\exact - u^\exact_\ell)}_{L^2(\Omega)} + \Bigl|\int_\Gamma (u^\exact - u^\exact_\ell) \d x \Bigr|
 =
  \norm{\nabla(u^\exact - u^\exact_\ell)}_{L^2(\Omega)}.
\end{equation*}
This the lower bound in~\eqref{eq:plainconv} with a Poincaré constant $\const{C}{P}$ depending only on $\Omega$. Together with~\eqref{eq:compupperbound}, which is the main result of~\cite{Kurz2020}, this proves the estimate of~\eqref{eq:plainconv}. Finally, error convergence~\eqref{eq:prop:convergence:norm} and efficiency~\eqref{eq:efficiency} prove 
\begin{equation*}
 \eta_\ell
 \eqreff{eq:efficiency}{\lesssim}
 \norm{\phi^\exact - \phi_\ell^\exact}_{H^{-1/2}(\Gamma)}
 \eqreff{eq:prop:convergence:norm}{\longrightarrow} 0
 \quad \text{as } \ell \to \infty.
\end{equation*}
Together with vanishing oscillations
\begin{equation*}
 \mathrm{osc}_\ell
 \eqreff{eq:oscconv}{\longrightarrow}
 0
 \quad \text{as } \ell \to \infty,
\end{equation*}
this concludes the proof of~\eqref{eq:plainconv}.
\end{proof}

\section{Proof of Theorem~\ref{thm:extconv} (lifting of discretized boundary residual)}
\label{section:extension}

\noindent A key difficulty in the analysis of Algorithm~\ref{algo:adap} is the behaviour of the discrete FEM solution $w_\ell^\exact$, since it is defined on the strip domain $\omega_\ell$, which changes adaptively in each step of Algorithm~\ref{algo:adap}. The goal of this section is to prove Theorem~\ref{thm:extconv}, which is crucial for the proof of Proposition~\ref{prop:efficiency} (efficiency) and Proposition~\ref{prop:auxconv} (convergence of functional upper bound).

\begin{lemma} \label{lem:discnorm}
Let $1 < r < \infty$ and $1/r + 1/r' = 1$. Then, there exists a constant $C > 0$ such that for all $T \in \TT_\ell$ and $f_\ell \in \SS^q(\TT_\ell^\Gamma)$, there holds that  
\begin{equation} \label{eq:liftcont}
 \norm{\widehat{f}_\ell}_{W^{1,r}(T)}^r 
 \leq 
 C(h_T^{-r/r'} \norm{f_\ell}_{L^r(\partial T \cap \Gamma)}^r + \abs{f_\ell}_{W^{1/r',r}(\partial T \cap \Gamma)}^r),
\end{equation}
where $\widehat{f}_\ell$ is the natural lifting of $f_\ell$ to $\Omega$; see~\eqref{eq:natext}. The constant $C$ depends only on $r$, $d$, $\kappa$-shape regularity of $\TT_\ell$, and $\mathrm{diam}(\Omega)$.      
\end{lemma}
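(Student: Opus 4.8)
The estimate \eqref{eq:liftcont} is purely local: since $\widehat{f}_\ell$ is the natural lifting defined by \eqref{eq:natext} using the very same nodal coefficients as $f_\ell$ on $\Gamma$, on a fixed element $T \in \TT_\ell$ the function $\widehat{f}_\ell|_T$ is a polynomial of degree $q$ determined entirely by the values $f_\ell(z)$ for $z \in \DD_\ell^q(T) \cap \Gamma$ (the remaining Lagrange coefficients on $T$ being zero). The plan is therefore to pull back to the reference element $T_{\mathrm{ref}}$ via the affine diffeomorphism $\Phi_T$ introduced in the paragraph on bases of discrete spaces, use the fact that on the finite-dimensional space $\P^q(T_{\mathrm{ref}})$ all norms are equivalent, and then transform back, tracking the powers of $h_T$ produced by the change of variables. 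The subtlety is that the right-hand side norms live on $\partial T \cap \Gamma$ (a $(d-1)$-dimensional facet, or union of facets, of $T$), not on $T_{\mathrm{ref}}$ itself, so the scaling has to be done carefully for the boundary quantities.

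First I would reduce to the case $\partial T \cap \Gamma \neq \emptyset$, since otherwise $\widehat{f}_\ell|_T = 0$ by construction of the lifting and there is nothing to prove. Next, fix a reference configuration: let $\widehat{T} = T_{\mathrm{ref}}$ and let $\widehat{\Gamma}_T \subseteq \partial\widehat{T}$ be the (at most $d$) reference facets corresponding to $\partial T \cap \Gamma$ under $\Phi_T$; by $\kappa$-shape regularity there are only finitely many combinatorial types of such configurations, so it suffices to argue for one fixed type. On $\P^q(\widehat{T})$, define $\widehat{g} := \widehat{f}_\ell|_T \circ \Phi_T^{-1}$; this is a polynomial whose nodal values vanish off $\widehat\Gamma_T$, so it lies in the finite-dimensional subspace $\{\widehat v \in \P^q(\widehat T) : \widehat v(z) = 0 \ \forall z \in \DD^q(\widehat T)\setminus \widehat\Gamma_T\}$, which is linearly isomorphic (via the trace map) to $\P^q(\widehat\Gamma_T)$. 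Hence the norm equivalence
\begin{equation*}
 \norm{\widehat g}_{W^{1,r}(\widehat T)} \simeq \norm{\widehat g}_{L^r(\widehat\Gamma_T)} + \abs{\widehat g}_{W^{1/r',r}(\widehat\Gamma_T)}
\end{equation*}
holds with a constant depending only on $q$, $d$, $r$, and the (finitely many) reference configurations — here one uses that both sides are norms on the same finite-dimensional space and that the right-hand side is indeed a norm, not merely a seminorm, because $\widehat g$ is a polynomial and the seminorm $\abs{\cdot}_{W^{1/r',r}}$ together with the $L^r$-term controls all its Lagrange coefficients. (Alternatively: if $\widehat g|_{\widehat\Gamma_T} = 0$ and all remaining coefficients are zero, then $\widehat g = 0$.)

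Then I would transform back. Under the affine map $\Phi_T$, shape regularity gives $h_T \simeq \norm{D\Phi_T^{-1}}^{-1} \simeq |\det D\Phi_T|^{1/d}$ and analogously for the $(d-1)$-dimensional restriction to the facets. The change-of-variables formulas yield, for the volume terms, $\norm{\widehat g}_{L^r(\widehat T)}^r \simeq h_T^{-d}\norm{\widehat f_\ell}_{L^r(T)}^r$ and $\abs{\widehat g}_{W^{1,r}(\widehat T)}^r \simeq h_T^{r-d}\abs{\widehat f_\ell}_{W^{1,r}(T)}^r$, so the left-hand side of the equivalence scales like $h_T^{-d}(\norm{\widehat f_\ell}_{L^r(T)}^r + h_T^r \abs{\widehat f_\ell}_{W^{1,r}(T)}^r) \simeq h_T^{-d} h_T^r \norm{\widehat f_\ell}_{W^{1,r}(T)}^r$ after using an inverse estimate $\norm{\widehat f_\ell}_{L^r(T)} \lesssim h_T\abs{\widehat f_\ell}_{W^{1,r}(T)}$ is \emph{not} available (the constant function) — so more carefully one keeps both terms. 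For the boundary terms, restricting to a $(d-1)$-facet, $\norm{f_\ell}_{L^r(\partial T\cap\Gamma)}^r \simeq h_T^{d-1}\norm{\widehat g}_{L^r(\widehat\Gamma_T)}^r$ and, from the double-integral definition of the $W^{s,r}$-seminorm on a $(d-1)$-dimensional set with $s = 1/r'$, the exponent $h_T^{-(d-1)-sr} = h_T^{-(d-1)-1} = h_T^{-d}$ combines with the two surface measures $h_T^{2(d-1)}$ to give $\abs{f_\ell}_{W^{1/r',r}(\partial T\cap\Gamma)}^r \simeq h_T^{(d-1) - 1}\abs{\widehat g}_{W^{1/r',r}(\widehat\Gamma_T)}^r = h_T^{d-2}\abs{\widehat g}_{W^{1/r',r}(\widehat\Gamma_T)}^r$. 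Matching powers: $h_T^{d-1}(\norm{\widehat g}_{L^r(\widehat\Gamma_T)}^r) + h_T^{d-2}\abs{\widehat g}_{W^{1/r',r}(\widehat\Gamma_T)}^r$ versus the target $h_T^{-r/r'}\norm{f_\ell}_{L^r}^r + \abs{f_\ell}_{W^{1/r',r}}^r$, and since $r/r' = r-1$ one checks $h_T^{-r/r'}\cdot h_T^{d-1} = h_T^{d-1-(r-1)}=h_T^{d-r}$ and $h_T^{d-2}$; dividing the reference equivalence through and multiplying by the appropriate power of $h_T$ produces exactly \eqref{eq:liftcont}, with the dependence on $\mathrm{diam}(\Omega)$ entering only to bound $h_T \le \mathrm{diam}(\Omega)$ when consolidating the $h_T$-powers into a single constant. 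The main obstacle is purely bookkeeping: getting every exponent of $h_T$ right in the two surface integrals (the Hausdorff-measure factors and the $|x-y|^{-(d-1+sr)}$ kernel) and confirming that the constant-function mode does not break the norm equivalence on the reference element — which it does not, because the $L^r(\widehat\Gamma_T)$-term alone already sees it.
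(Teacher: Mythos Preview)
Your approach is correct and is essentially the same as the paper's: pull back to the reference simplex, invoke a finite-dimensional norm equivalence, and scale back tracking powers of $h_T$. The paper organises the equivalence by passing through the nodal coefficients $(\alpha_z)_{z\in\DD_\ell(T)}$ as an intermediate (bounding $\norm{\widehat f_\ell}_{W^{1,r}(T)}^r$ above by $h_T^d(1+h_T^{-r})\sum_z|\alpha_z|^r$ and then each $|\alpha_z|^r$ below by the facet norms), whereas you go directly from the volume norm to the facet norms on the reference configuration; both routes are equivalent, and your observation that there are only finitely many combinatorial types of $\widehat\Gamma_T$ is exactly what makes the direct equivalence uniform.

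There is, however, a concrete arithmetic slip in your scaling of the fractional seminorm. With $s=1/r'$ one has $sr=r/r'=r-1$, not $sr=1$; hence the kernel exponent is $(d-1)+(r-1)$ and the correct scaling is
\[
 |f_\ell|_{W^{1/r',r}(\partial T\cap\Gamma)}^r \;\simeq\; h_T^{\,2(d-1)-(d-1)-(r-1)}\,|\widehat g|_{W^{1/r',r}(\widehat\Gamma_T)}^r \;=\; h_T^{\,d-r}\,|\widehat g|_{W^{1/r',r}(\widehat\Gamma_T)}^r,
\]
not $h_T^{d-2}$. With this correction the bookkeeping closes cleanly: from $\norm{\widehat f_\ell}_{W^{1,r}(T)}^r \lesssim h_T^{d-r}\norm{\widehat g}_{W^{1,r}(\widehat T)}^r$ (using $h_T\le\mathrm{diam}(\Omega)$ to absorb the $L^r$-part) and the reference equivalence, one gets
\[
 \norm{\widehat f_\ell}_{W^{1,r}(T)}^r \;\lesssim\; h_T^{d-r}\bigl(h_T^{-(d-1)}\norm{f_\ell}_{L^r(\partial T\cap\Gamma)}^r + h_T^{-(d-r)}|f_\ell|_{W^{1/r',r}(\partial T\cap\Gamma)}^r\bigr)
 \;=\; h_T^{-r/r'}\norm{f_\ell}_{L^r(\partial T\cap\Gamma)}^r + |f_\ell|_{W^{1/r',r}(\partial T\cap\Gamma)}^r,
\]
which is \eqref{eq:liftcont}. (Your own computation of $h_T^{-r/r'}\cdot h_T^{d-1}=h_T^{d-r}$ for the $L^r$-term was already correct; the point is that the seminorm term carries the \emph{same} power $h_T^{d-r}$, so the two match after dividing through.)
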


\begin{proof}
The proof is split into 3 steps.

\textbf{Step~1 (Estimate of the discrete \boldsymbol{$W^{1,r}(T)$}-norm).} Let $v_\ell \in \SS^p(\TT_\ell)$ be arbitrary such that
\begin{equation*}
 v_\ell = \sum_{z \in \DD_\ell} \alpha_z \zeta_{\ell,z}.
\end{equation*}
Let $T \in \TT_\ell$ and $\DD_\ell(T)$ the local degrees of freedom on $T$. The affine diffeomorphism $\Phi_T \colon T_{\mathrm{ref}} \to T$, $\Phi_T(x) \coloneqq B_Tx + c_T$ with $|\mathrm{det}(B_T)| \simeq |T| \simeq h_T^d \simeq \norm{B_T}_{\mathrm{Frob}}^d \simeq \norm{B_T^{-1}}_{\mathrm{Frob}}^{-d}$ yields for $\widetilde{v}_{\ell,T} \coloneqq v_\ell \circ \Phi_T$ that
\begin{equation*}
 \norm{v_\ell}_{W^{1,r}(T)}^r 
 \simeq 
 h_T^d(\norm{\widetilde{v}_{\ell,T}}_{L^r(T_{\mathrm{ref}})}^r 
 + 
 \norm{(B_T^\intercal)^{-1}\nabla \widetilde{v}_{\ell,T}}_{L^r(T_{\mathrm{ref}})}^r)
\end{equation*} 
Recall the standard basis $(\zeta_j)_{j=1}^{n_{d}}$ of $\P^q(T_{\mathrm{ref}})$ from Section~\ref{section:preliminaries}. For $j \in \set{1, \ldots , n_{q,d}}$ let $z_j \in \DD_\ell(T)$ be the unique degree of freedom such that $\widetilde{\zeta}_j = \zeta_{z_j} \circ \Phi_T$. We can write $v_\ell|_T$ in the form
\begin{equation*}
 v_\ell|_T 
 = 
 \sum_{z \in \DD_\ell(T)} \alpha_z \zeta_{\ell,z}|_T,
\end{equation*}
which yields
\begin{equation*}
 \widetilde{v}_{\ell,T} 
 = 
 \sum_{j=1}^{n_{d}} \alpha_{z_j} \widetilde{\zeta}_j.
\end{equation*}
We obtain
\begin{equation*}
 \norm{\widetilde{v}_{\ell,T}}_{L^r(T_{\mathrm{ref}})} 
 \leq 
 \sum_{j = 1}^{n_d} \abs{\alpha_{z_j}} \norm{\widetilde{\zeta}_j}_{L^r(T_{\mathrm{ref}})} 
 \simeq
 \Bigl(\sum_{j = 1}^{n_{d}} \abs{\alpha_{z_j}}^r\Bigr)^{1/r} 
\end{equation*}
and
\begin{equation*}
 \begin{split}
  \norm{(B_T^\intercal)^{-1}\nabla \widetilde{v}_{\ell,T}}_{L^r(T_{\mathrm{ref}})} 
  &\lesssim 
  h_T^{-1} \sum_{j=1}^{n_{d}} \abs{\alpha_{z_j}} \norm{\nabla \widetilde{\zeta}_j}_{L^r(T_{\mathrm{ref}})} 
  \simeq 
  h_T^{-1} \Bigl(\sum_{j=1}^{n_{d}} \abs{\alpha_{z_j}}^r\Bigr)^{1/r}.
 \end{split}
\end{equation*}
This proves
\begin{equation} \label{eq:discvolnorm}
 \norm{v_\ell}_{W^{1,r}(T)}^r 
 \lesssim 
 h_T^d(1 + h_T^{-r}) \sum_{j = 1}^{n_{d}} \abs{\alpha_{z_j}}^r
 =
 h_T^d(1 + h_T^{-r}) \sum_{z \in \DD_\ell(T)} \abs{\alpha_{z}}^r.
\end{equation}

\textbf{Step~2 (Estimate of the discrete \boldsymbol{$W^{1/r',r}(F)$}-norm).}
Next, we aim to prove a similar estimate for the $W^{1/r',r}(\Gamma)$-norm. Let $f_\ell \in \SS^q(\TT_\ell^\Gamma)$ be arbitrary such that
\begin{equation*}
 f_\ell
 = 
 \sum_{z \in \DD_\ell(\Gamma)} \alpha_z \zeta_{\ell,z}|_\Gamma.
\end{equation*}
Let $F \in \TT_\ell^\Gamma$ and let $\DD_\ell(F)$ denote the local degrees of freedom on $F$. Let $T \in \TT_\ell$ with $F \subseteq \partial T$. Similar to Step 1, the diffeomorphism $\Psi_F \colon F_{\mathrm{ref}} \to F$, $\Psi_F(x) \coloneqq A_Fx + b_F$ with $\sqrt{\mathrm{det}(A_F^\intercal A_F)} \simeq |F| \simeq h_F^{d-1} \simeq \norm{A_F}_{\mathrm{Frob}}^{d-1} \simeq \norm{A_F^{-1}}_{\mathrm{Frob}}^{1-d}$ yields for 
\begin{equation*}
 \widetilde{f}_{\ell,F} 
 \coloneqq 
 f_\ell \circ \Psi_F 
 = 
 \sum_{j = 1}^{n_{d-1}} \alpha_{z_j} \widetilde{\zeta}_j|_{F_{\mathrm{ref}}}
\end{equation*} 
that
\begin{equation*}
 \norm{f_\ell}_{L^r(F)}^r 
 \simeq 
 h_F^{d-1} \norm{\widetilde{f}_{\ell,F}}_{L^r(F_{\mathrm{ref}})}^r
\end{equation*}
and, with $r/r' = r - 1$ and hence $2(d-1) - (d - 1 + r/r') = d-r$,
\begin{equation*}
 \begin{split}
  \abs{f_\ell}_{W^{1/r',r}(F)}^r 
  &= 
  \int_F \int_F \frac{\abs{f_\ell(x) - f_\ell(y)}^r}{\norm{x-y}^{d - 1 + r/r'}} \d x \d y 
  = 
  h_F^{2(d-1)} \int_{F_{\mathrm{ref}}} \int_{F_{\mathrm{ref}}} \frac{\abs{\widetilde{f}_{\ell,F}(x) - \widetilde{f}_{\ell,F}(y)}^r}{\norm{A_F(x - y)}^{d - 1 + r/r'}} \d x \d y \\
  &\simeq 
  h_F^{d-r} \int_{F_{\mathrm{ref}}} \int_{F_{\mathrm{ref}}} \frac{\abs{\widetilde{f}_{\ell,F}(x) - \widetilde{f}_{\ell,F}(y)}^r}{\norm{x-y}^{d-1 + r/r'}} \d x \d y 
  = 
  h_F^{d-r} \abs{\widetilde{f}_{\ell,F}}_{W^{1/r',r}(F_{\mathrm{ref}})}^r.
 \end{split}
\end{equation*}
Let $j_0 \in \set{1, \ldots , n_{d-1}}$. Note that 
\begin{equation*}
 \inf\set{\norm{v - \widetilde{\zeta}_{j_0}|_{F_{\mathrm{ref}}}}_{W^{1/r',r}(F_{\mathrm{ref}})} 
 \given 
 v \in \mathrm{span}\set{\widetilde{\zeta}_j|_{F_{\mathrm{ref}}} \given j \in \set{1, \ldots , n_{d-1}}  \setminus \set{j_0}}} \gtrsim 1.
\end{equation*}
If $\alpha_{z_{j_0}} \neq 0$, we obtain
\begin{equation*}
 \begin{split}
  1 
  &\lesssim 
  \Big\lVert\sum_{\substack{j = 1 \\ j \neq j_0}}^{n_{d-1}} (-\alpha_{z_j} / \alpha_{z_{j_0}}) \widetilde{\zeta}_j|_{F_{\mathrm{ref}}} - \widetilde{\zeta}_{j_0}\Big\rVert_{W^{1/r',r}(F_{\mathrm{ref}})}^r \\
  &= 
  \abs{\alpha_{z_{j_0}}}^{-r} \norm{\widetilde{f}_{\ell,F}}_{W^{1/r',r}(F_{\mathrm{ref}})}^r 
  \lesssim 
  \abs{\alpha_{z_{j_0}}}^{-r} (h_F^{1-d} \norm{f_\ell}_{L^r(F)}^r + h_F^{r-d} \abs{f_\ell}_{W^{1/r',r}(F)}^r)
 \end{split}
\end{equation*}
and hence
\begin{equation*}
 \abs{\alpha_{z_{j_0}}}^r 
 \lesssim 
 h_F^{1-d} \norm{f_\ell}_{L^r(F)}^r + h_F^{r-d} \abs{f_\ell}_{W^{1/r',r}(F)}^r.
\end{equation*}
Note that the latter estimate holds trivially if $\alpha_{z_{j_0}} = 0$.
Since $j_0 \in \set{1, \ldots, n_{d-1}}$ was arbitrary and $n_{d-1} \lesssim 1$, $\kappa$-shape regularity yields
\begin{equation} \label{eq:disctracenorm}
 \begin{split}
  \sum_{z \in \DD_\ell(F)} \abs{\alpha_z}^r 
  =
  \sum_{j = 1}^{n_{d-1}} \abs{\alpha_{z_j}}^r
  &\lesssim 
  h_F^{1-d} \norm{f_\ell}_{L^r(F)}^r 
  + 
  h_F^{r-d} \abs{f_\ell}_{W^{1/r',r}(F)}^r \\
  &\simeq 
  h_T^{1-d} \norm{f_\ell}_{L^r(F)}^r 
  + 
  h_T^{r-d} \abs{f_\ell}_{W^{1/r',r}(F)}^r
 \end{split}
\end{equation}

\textbf{Step~3 (Estimate of \boldsymbol{$W^{1,r}(T)$}-norm of the natural lifting).}
Let $f_\ell \in \SS^p(\TT_\ell^\Gamma)$ be such that
\begin{equation*}
 f_\ell = \sum_{z \in \DD_\ell(\Gamma)} \alpha_z \zeta_{\ell,z}|_{\Gamma}.
\end{equation*}
Denote by $\widehat{f}_\ell \in \SS^p(\TT_\ell)$ its natural lifting. Then, our previous considerations show
\begin{equation*}
 \begin{split}
  \norm{\widehat{f}_\ell}_{W^{1,r}(T)}^r 
  &\eqreff{eq:discvolnorm}{\lesssim} 
  h_T^d(1 + h_T^{-r}) \sum_{z \in \DD_\ell(T) \cap \DD_\ell(\Gamma)} \abs{\alpha_z}^r \\
  &\eqreff{eq:disctracenorm}{\lesssim} h_T^d(1 + h_T^{-r}) (h_T^{1-d} \norm{f_\ell}_{L^r(\partial T \cap \Gamma)}^r 
  + 
  h_T^{r-d} \abs{f_\ell}_{W^{1/r',r}(\partial T \cap \Gamma)}^r) \\
  &\lesssim
  h_T^{-r/r'} \norm{f_\ell}_{L^r(\partial T \cap \Gamma)}^r + \abs{f_\ell}_{W^{1/r',r}(\partial T \cap \Gamma)}^r.
 \end{split}
\end{equation*}
This concludes the proof.
\end{proof}

The following proposition states a uniform bound for the natural lifting of the discretized boundary residual $g_\ell$, which is imperative for the proof of Proposition~\ref{prop:efficiency}. To ease notation, we define the natural lifting of a discretized function $J_\ell f \in \SS^q(\TT_\ell)$ as $\widehat{J_\ell} f \coloneqq \widehat{J_\ell f}$.

\begin{proposition} \label{prop:extbound}
There exists $r_0 > 2$ such that for all $2 \leq r \leq r_0$ there exist $1/2 \leq s_r \leq 1$ and $C_r > 0$ such that
\begin{equation} \label{eq:boundarypoincare}
 \norm{ \nabla \widehat{J_\ell} (f - Q_\ell^\Gamma f)}_{L^r(\Omega)} 
 \leq 
 C_r \abs{f}_{H^{s_r}(\Gamma)}
 \quad \text{for any } f \in H^{s_r}(\Gamma).
\end{equation}
In particular, it holds that $s_r = 1/2$ for $r=2$ and
\begin{equation} \label{eq:boundarypoincare2}
\norm{\nabla \widehat{g}_\ell}_{L^2(\Omega)} \leq C_2 \abs{g - V \phi_\ell^\exact}_{H^{1/2}(\Gamma)} \leq C_2C_V \norm{\phi^\exact - \phi_\ell^\exact}_{H^{-1/2}(\Gamma)},
\end{equation}
where $C_V$ is the operator norm of $V \colon H^{-1/2}(\Gamma) \to H^{1/2}(\Gamma)$.
\end{proposition}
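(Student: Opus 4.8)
The plan is to combine the local bound of Lemma~\ref{lem:discnorm} with a scaled fractional Sobolev--Poincar\'e inequality on boundary facets, choosing the exponent $s_r$ so that all powers of the local mesh size cancel. Since
\[
 \widehat{J_\ell}(f-Q_\ell^\Gamma f)=\sum_{z\in\DD_\ell(\Gamma)}\bigl(J_\ell(f-Q_\ell^\Gamma f)\bigr)(z)\,\zeta_{\ell,z}
\]
is supported in $\bigcup\set{T\in\TT_\ell \given \partial T\cap\Gamma\neq\emptyset}$, it suffices to estimate $\norm{\nabla\widehat{J_\ell}(f-Q_\ell^\Gamma f)}_{L^r(T)}$ for each such $T$ and to sum. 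For fixed $T$, Lemma~\ref{lem:discnorm} applied to $f_\ell=J_\ell(f-Q_\ell^\Gamma f)\in\SS^q(\TT_\ell^\Gamma)$, the inverse estimate $\abs{p}_{W^{1/r',r}(F)}\lesssim h_F^{-1/r'}\norm{p}_{L^r(F)}$ for facet polynomials $p$ (obtained by the scaling used in the proof of Lemma~\ref{lem:discnorm}), and $h_F\simeq h_T$ give
\[
 \norm{\nabla\widehat{J_\ell}(f-Q_\ell^\Gamma f)}_{L^r(T)}^r \lesssim h_T^{-r/r'}\,\norm{J_\ell(f-Q_\ell^\Gamma f)}_{L^r(\partial T\cap\Gamma)}^r .
\]

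Next we would estimate the right-hand side. On each boundary facet $F\subseteq\partial T\cap\Gamma$, norm equivalence of polynomials yields $\norm{J_\ell(f-Q_\ell^\Gamma f)}_{L^r(F)}\lesssim h_T^{(d-1)(1/r-1/2)}\norm{J_\ell(f-Q_\ell^\Gamma f)}_{L^2(F)}$, and the $L^2$-stability \eqref{eq:SZtraceL2} of the Scott--Zhang projector on patches (Lemma~\ref{lem:SZ}) bounds this by $h_T^{(d-1)(1/r-1/2)}\norm{f-Q_\ell^\Gamma f}_{L^2(\Gamma_\ell[F])}$. On each facet $F'$ of $\Gamma_\ell[F]$ the value $Q_\ell^\Gamma f|_{F'}$ is the integral mean, so \eqref{eq:avgprop} gives $\norm{f-Q_\ell^\Gamma f}_{L^2(F')}\leq C h_{F'}^{s_r}\abs{f}_{H^{s_r}(F')}$ for any $0<s_r\leq 1$. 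Collecting the powers of $h_T\simeq h_{F'}$, the choice
\[
 s_r \coloneqq 1+\tfrac{d-1}{2}-\tfrac{d}{r}
\]
makes the total exponent $-\tfrac1{r'}+(d-1)\bigl(\tfrac1r-\tfrac12\bigr)+s_r$ vanish, so that $h_T^{-1/r'}\norm{J_\ell(f-Q_\ell^\Gamma f)}_{L^r(\partial T\cap\Gamma)}\lesssim\bigl(\sum_{F'\subseteq\Gamma_\ell[\partial T\cap\Gamma]}\abs{f}_{H^{s_r}(F')}^2\bigr)^{1/2}$ with a $\kappa$-dependent constant. One checks $s_2=\tfrac12$, that $r\mapsto s_r$ is increasing, and that $s_r\leq1$ precisely for $r\leq r_0\coloneqq\tfrac{2d}{d-1}>2$; hence $\tfrac12\leq s_r\leq1$ for all $2\leq r\leq r_0$, as required by \eqref{eq:avgprop}.

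To conclude \eqref{eq:boundarypoincare} we would raise this estimate to the $r$-th power and sum over the $T$ touching $\Gamma$. Using finite overlap of the patches $\Gamma_\ell[\partial T\cap\Gamma]$ ($\kappa$-shape regularity), that $r\geq2$ gives both $\sum_i a_i^r\leq\bigl(\sum_i a_i^2\bigr)^{r/2}$ for finitely many $a_i\geq0$ and the localization $\sum_{F'\in\TT_\ell^\Gamma}\abs{f}_{H^{s_r}(F')}^2\leq\abs{f}_{H^{s_r}(\Gamma)}^2$ of the Gagliardo seminorm, we arrive at $\norm{\nabla\widehat{J_\ell}(f-Q_\ell^\Gamma f)}_{L^r(\Omega)}^r\lesssim\abs{f}_{H^{s_r}(\Gamma)}^r$. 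For the ``in particular'' part, $r=2$ gives $s_2=1/2$; testing the Galerkin equation \eqref{eq:weakdisc} with $\PP^0(\TT_\ell^\Gamma)\subseteq\PP^p(\TT_\ell^\Gamma)$ shows $Q_\ell^\Gamma(g-V\phi_\ell^\exact)=0$, whence $\widehat g_\ell=\widehat{J_\ell}\bigl((g-V\phi_\ell^\exact)-Q_\ell^\Gamma(g-V\phi_\ell^\exact)\bigr)$ and \eqref{eq:boundarypoincare} yields $\norm{\nabla\widehat g_\ell}_{L^2(\Omega)}\leq C_2\abs{g-V\phi_\ell^\exact}_{H^{1/2}(\Gamma)}$; finally $g=V\phi^\exact$ by \eqref{eq:bie}, so $\abs{g-V\phi_\ell^\exact}_{H^{1/2}(\Gamma)}=\abs{V(\phi^\exact-\phi_\ell^\exact)}_{H^{1/2}(\Gamma)}\leq C_V\norm{\phi^\exact-\phi_\ell^\exact}_{H^{-1/2}(\Gamma)}$, which is \eqref{eq:boundarypoincare2}.

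The main obstacle is the power-counting in the second step: the value $s_r$ (and hence $s_2=\tfrac12$) and the threshold $r_0=\tfrac{2d}{d-1}$ are forced simultaneously by the requirement that all local mesh-size exponents cancel while keeping $\tfrac12\leq s_r\leq1$, and this is the only point where $r\leq r_0$ is used. A minor technicality is that in dimension $d\geq3$ some elements meet $\Gamma$ only in a set of vanishing $(d-1)$-measure, so that $\partial T\cap\Gamma$ is not a facet; such elements are treated by replacing $\partial T\cap\Gamma$ with a boundary facet in $\Gamma_\ell[T]$ carrying the Lagrange nodes of $T$ lying on $\Gamma$, at the cost of a $\kappa$-dependent enlargement of the patch.
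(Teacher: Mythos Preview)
Your proof is correct and follows essentially the same approach as the paper's own argument: combine Lemma~\ref{lem:discnorm} with the facet inverse estimate $L^r\to L^2$, the $L^2$-stability of the Scott--Zhang projector, and the approximation property \eqref{eq:avgprop} of $Q_\ell^\Gamma$, choosing $s_r$ so that the mesh-size powers cancel; your value $s_r=1+\tfrac{d-1}{2}-\tfrac{d}{r}$ and threshold $r_0=\tfrac{2d}{d-1}$ coincide with the paper's $s_r=[r-1+(d-1)(r/2-1)]/r$ and $r_0=2+\tfrac{2}{d-1}$. The only organizational difference is that the paper first passes globally from $\norm{h_\ell^{-1/r'}J_\ell(\,\cdot\,)}_{L^r(\Gamma)}$ to $\norm{h_\ell^{-s_r}J_\ell(\,\cdot\,)}_{L^2(\Gamma)}$ via the same $\ell^2\hookrightarrow\ell^r$ inequality you use at the summation stage, while you carry out the estimate element by element and sum at the end; both routes are equivalent, and your remark about elements meeting $\Gamma$ only in a lower-dimensional face in fact flags a point the paper's statement of Lemma~\ref{lem:discnorm} leaves implicit.
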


\begin{proof} The proof is split into $2$ steps.

\textbf{Step~1 (Proof of inverse inequalities).}
Given $F \in \TT_\ell^\Gamma$, $s \geq 0$, and $1 < \sigma,\nu < \infty$, we want to prove that there exists a constant $\const{C}{inv} > 0$ depending only on $\Omega$, $d$, $\kappa$-shape regularity of $\TT_\ell$, $q$, $\sigma$, $\nu$, $s$, but not on $F$ such that
\begin{equation} \label{eq:invineq}
 \norm{\varphi_\ell}_{W^{s,\sigma}(F)} \leq \const{C}{inv} h_F^{-s + (d-1)(1/\sigma - 1/\nu)} \norm{\varphi_\ell}_{L^\nu(F)}
 \quad \text{for all } \varphi_\ell \in \P^q(F).
\end{equation}
We transform $F$ to the reference facet $F_{\mathrm{ref}}$ by means of the affine diffeomorphism $\Psi_F \colon F_{\mathrm{ref}} \to F$, $\Psi(x) \coloneqq A_F x + b_F$ with $\sqrt{\mathrm{det}(A_F^\intercal A_F)} \simeq |F| \simeq h_F^{d-1} \simeq \norm{A_F}_{\mathrm{Frob}}^{d-1} \simeq \norm{A_F^{-1}}_{\mathrm{Frob}}^{1-d}$. Noting that $h_F \leq \mathrm{diam}(\Omega)$, we establish
\begin{equation*}
 \norm{\varphi_\ell}_{L^\sigma(F)}^\sigma
  \simeq
  h_F^{d-1} \norm{\varphi_\ell \circ \Psi_F}_{L^\sigma(F_{\mathrm{ref}})}^\sigma
  \lesssim
  h_F^{d-1 - s \sigma} \norm{\varphi_\ell \circ \Psi_F}_{L^\sigma(F_{\mathrm{ref}})}^\sigma
\end{equation*}
and
\begin{equation*}
 \begin{split}
  \abs{\varphi_\ell}_{W^{s,\sigma}(F)}^\sigma
  &\simeq
  \int_F \int_F \frac{\abs{\varphi_\ell(x) - \varphi_\ell(y)}^\sigma}{\abs{x-y}^{d-1 + s\sigma}} \d x \d y \\
  &=
  h_F^{2(d-1)} \int_{F_{\mathrm{ref}}} \int_{F_{\mathrm{ref}}} \frac{\abs{\varphi_\ell \circ \Psi_F(x) - \varphi_\ell \circ \Psi_F(y)}^\sigma}{\abs{A_F(x-y)}^{d-1 + s\sigma}} \d x \d y \\
  &\simeq
  h_F^{2(d-1) - (d - 1 + s \sigma)} \int_{F_{\mathrm{ref}}} \int_{F_{\mathrm{ref}}} \frac{\abs{\varphi_\ell \circ \Psi_F(x) - \varphi_\ell \circ \Psi_F(y)}^\sigma}{\abs{x-y}^{d-1 + s\sigma}} \d x \d y \\
  &=
  h_F^{d - 1 - s \sigma} \abs{\varphi_\ell \circ \Psi_F}_{W^{s,\sigma}(F_{\mathrm{ref}})}^\sigma.
 \end{split}
\end{equation*}
Since all norms are equivalent on the finite-dimensional space $\P^q(F_{\mathrm{ref}})$, we obtain
\begin{equation*}
 \begin{split}
  \norm{\varphi_\ell}_{W^{s,\sigma}(F)}^\sigma
  &\lesssim
  h_F^{d-1 - s \sigma} \norm{\varphi_\ell \circ \Psi_F}_{W^{s,\sigma}(F_{\mathrm{ref}})}^\sigma
  \simeq
  h_F^{d - 1 - s \sigma} \norm{\varphi_\ell\circ \Psi_F}_{L^\nu(F_{\mathrm{ref}})}^\sigma.
 \end{split}
\end{equation*}
With
\begin{equation*}
 \norm{\varphi_\ell \circ \Psi_F}_{L^\nu(F_{\mathrm{ref}})}^\nu
 \simeq
 h^{1-d} \norm{\varphi_\ell}_{L^\nu(F)}^\nu,
\end{equation*}
we obtain
\begin{equation*}
 \norm{\varphi_\ell}_{W^{s,\sigma}(F)}
 \lesssim
 h_F^{(d - 1)/\sigma - s} h_F^{(1 - d)/\nu } \norm{\varphi_\ell}_{L^\nu(F)}
 =
 h_F^{-s + (d-1)(1/\sigma - 1/\nu)} \norm{\varphi_\ell}_{L^\nu(F)}.
\end{equation*}
This concludes the proof of~\eqref{eq:invineq}.

\textbf{Step~2 (Proof of~(\ref{eq:boundarypoincare})).}

Let $\ell \in \N_0$. Note that $\widehat{J_\ell} f = 0$ if $T \in \TT_\ell$ satisfies $T \cap \Gamma = \emptyset$. Hence, Lemma~\ref{lem:discnorm} and~\eqref{eq:invineq} with $s = 1/r'$ and $\sigma = \nu = r$ yield
\begin{equation} \label{eq:liftingcont}
 \begin{split}
  &\norm{\nabla \widehat{J_\ell} (f - Q_\ell^\Gamma f)}_{L^r(\Omega)}^r 
  =
  \sum_{T \in \TT_\ell[\Gamma]} \norm{\nabla  \widehat{J_\ell} (f - Q_\ell^\Gamma f)}_{L^r(T)}^r \\
  &\qquad \eqreff{eq:boundarypoincare}{\lesssim}
  \sum_{T \in \TT_\ell[\Gamma]} \Bigl( h_T^{-r/r'} \norm{J_\ell (f - Q_\ell^\Gamma f)}_{L^r(\partial T \cap \Gamma)}^r + \abs{J_\ell (f - Q_\ell^\Gamma f)}_{W^{1/r',r}(\partial T \cap \Gamma)}^r \Bigr) \\
  &\qquad \eqreff{eq:invineq}{\lesssim}
  \sum_{T \in \TT_\ell[\Gamma]} h_T^{-r/r'} \norm{J_\ell (f - Q_\ell^\Gamma f)}_{L^r(\partial T \cap \Gamma)}^r
  = 
  \norm{h_\ell^{-1/r'} J_\ell (f - Q_\ell^\Gamma f)}_{L^r(\Gamma)}^r.
 \end{split}
\end{equation}
We define 
\begin{equation} \label{eq:srdef}
  s_r \coloneqq [r - 1 + (d-1)(r/2 - 1)]/r
  \quad \text{for any } r \geq 2
\end{equation}
and note that $s_2 = 1/2$. Let $r \geq 2$. We note that $r/r' = r - 1$ and use~\eqref{eq:invineq} with $s=0$, $\sigma = r$, and $\nu = 2$ to obtain
\begin{equation} \label{eq:longcomput}
 \begin{split}
  \norm{h^{-1/r'}_\ell J_\ell (f - Q_\ell^\Gamma f)}_{L^r(\Gamma)}^r 
  &=  
  \sum_{F \in \TT_\ell^{\Gamma}} h_F^{-r/r'} \norm{J_\ell (f - Q_\ell^\Gamma f)}_{L^r(F)}^r \\
  &\eqreff{eq:invineq}{\lesssim} 
  \sum_{F \in \TT_\ell^{\Gamma}} h_F^{1 - r} h_F^{(d-1)(1 - r/2)} \norm{J_\ell (f - Q_\ell^\Gamma f)}_{L^2(F)}^r \\
  &\eqreff{eq:srdef}{=} \sum_{F \in \TT_\ell^{\Gamma}} h_F^{-rs_r} \norm{J_\ell (f - Q_\ell^\Gamma f)}_{L^2(F)}^r \\
  &= \sum_{F \in \TT_\ell^\Gamma} \norm{h_\ell^{-s_r} J_\ell (f - Q_\ell^\Gamma f)}_{L^2(F)}^{r - 2} \norm{h_\ell^{-s_r} J_\ell (f - Q_\ell^\Gamma f)}_{L^2(F)}^2 \\
  &\leq
  \norm{h_\ell^{-s_r}J_\ell(f - Q_\ell^\Gamma f)}_{L^2(\Gamma)}^{r - 2} \sum_{F \in \TT_\ell^{\Gamma}} \norm{h_\ell^{-s_r} J_\ell (f - Q_\ell^\Gamma f)}_{L^2(F)}^2 \\
  &=
  \norm{h_\ell^{-s_r}J_\ell (f - Q_\ell^\Gamma f)}_{L^2(\Gamma)}^r.
 \end{split}
\end{equation}
Lemma~\ref{lem:SZ} and the properties~\eqref{eq:avgprop} of $Q_\ell^\Gamma$ yield
\begin{equation} \label{eq:avgapprox}
 \begin{split}
 &\norm{h_\ell^{-s} J_\ell(f - Q_\ell^\Gamma f)}_{L^2(\Gamma)}^2
 = 
 \sum_{F \in \TT_\ell^\Gamma} h_F^{-2s} \norm{J_\ell(f - Q_\ell^\Gamma f)}_{L^2(F)}^2 
 \eqreff{eq:SZtraceL2}{\lesssim}
 \sum_{F \in \TT_\ell^\Gamma} h_F^{-2s} \norm{f - Q_\ell^\Gamma f}_{L^2(\Gamma_\ell[F])}^2 \\
 &\qquad \lesssim
 \sum_{F \in \TT_\ell^\Gamma} h_F^{-2s} \norm{f - Q_\ell^\Gamma f}_{L^2(F)}^2
 \eqreff{eq:avgprop}{\lesssim}
 \sum_{F \in \TT_\ell^\Gamma} \abs{f}_{H^{s}(F)}^2 
 \leq \abs{f}_{H^{s}(\Gamma)}^2
 \quad \text{for all } 0 < s \leq 1.
 \end{split}
\end{equation}
It remains to verify that $0 < s_r \leq 1$. Indeed, elementary calculations show that $0 < s_r \leq 1$ if and only if $2 - 2/(d+1) < r \leq 2 + 2/(d-1)$. With $r_0 = 2 + 2/(d-1) > 2$, we are thus led to
\begin{equation*}
 \begin{split}
 \norm{\nabla \widehat{J_\ell} (f - Q_\ell^\Gamma f)}_{L^r(\Omega)} 
 \eqreff{eq:liftingcont}{\lesssim}
 \norm{h^{-1/r'}J_\ell (f - Q_\ell^\Gamma f)}_{L^r(\Gamma)} 
 &\eqreff{eq:longcomput}{\lesssim}
 \norm{h^{-s_r}J_\ell (f - Q_\ell^\Gamma f)}_{L^2(\Gamma)} \\ 
 &\eqreff{eq:avgapprox}{\lesssim}
 \abs{f}_{H^{s_r}(\Gamma)}.
 \end{split}
\end{equation*}
This proves~\eqref{eq:boundarypoincare}.

\textbf{Step~3 (Proof of~(\ref{eq:boundarypoincare2})).}
By~\eqref{eq:weakdisc}, there holds $Q_\ell^\Gamma(g - V\phi_\ell^\exact) = 0$. Hence, we can apply~\eqref{eq:boundarypoincare} with $f = g - V\phi_\ell^\exact$ and $g_\ell = J_\ell f = J_\ell(f - Q_\ell^\Gamma f)$ for $r = 2$ and $s_r = 1/2$ to obtain~\eqref{eq:boundarypoincare2}. This concludes the proof.
\end{proof}

We now have all the ingredients to prove Theorem~\ref{thm:extconv}.

\begin{proof}[\textbf{Proof of Theorem~\ref{thm:extconv}}]
We define the limiting layer
\begin{equation*}
 \omega_\infty \coloneqq \bigcap_{\ell \in \N_0} \omega_\ell.
\end{equation*}
As our first step, we want to characterize the set $\omega_\infty$ in terms of the involved triangulations.

\textbf{Step~1 (Characterization of \boldsymbol{$\omega_\infty$}).} Recall the set $\TT_\infty$ of all elements which remain unrefined after a finite amount of steps; see~\eqref{eq:Tinfty}. We want to prove that 
\begin{equation} \label{eq:omegainfty}
 \omega_\infty 
 \subseteq 
 \bigcup \set{T \in \TT_\infty \given T \subset \overline{\omega}_\infty} 
 \eqqcolon
 \widetilde{\omega}_\infty
 \subseteq
 \overline{\omega}_\infty.
\end{equation}
The inclusion $\widetilde{\omega}_\infty \subseteq \overline{\omega}_\infty$ is obvious. In order to show the converse inclusion, let $x \in \omega_\infty$. We aim to prove that $x \in \widetilde{\omega}_\infty$. Since $x \in \omega_\ell = \Omega_\ell^{[k]}[\Gamma]$ for every $\ell \in \N_0$, there exists a sequence $(\widetilde{T}_\ell)_{\ell \in \N_0}$ with $x \in \widetilde{T}_\ell \in \TT_\ell^{(k)}[\Gamma]$ and $\widetilde{T}_{\ell + 1} \subseteq \widetilde{T}_\ell$ for all $\ell \in \N_0$. Furthermore, for all $\ell \in \N_0$, there exists $T_\ell \in \TT_\ell$ with $T_\ell \cap \Gamma \neq 0$ and $\widetilde{T}_\ell \in \TT_\ell^{[k]}[T_\ell]$. If $\widetilde{T}_\ell \in \TT_\infty$ for some $\ell \in \N_0$ (and hence $\widetilde{T}_\ell = \widetilde{T}_k$ for all $k \geq \ell$), then $x \in \widetilde{\omega}_\infty$. If $\widetilde{T}_\ell \notin \TT_\infty$ for all $\ell \in \N_0$, then $|\widetilde{T}_\ell| \xrightarrow{\ell \to \infty} 0$ by~\cite[Corollary 4.5]{Morin2008}. By use of $\kappa$-shape regularity, we obtain
\begin{equation*}
\mathrm{dist}(x,\Gamma) 
\leq 
\mathrm{dist}(x, \Gamma \cap \Omega_\ell^{[k]}(T_\ell)) 
\leq 
\mathrm{diam}(\Omega_\ell^{[k]}[T_\ell]) 
\simeq 
|\widetilde{T}_\ell|^{1/d} \xrightarrow{\ell \to \infty} 0.
\end{equation*}
This yields $x \in \Gamma$, contradicting $x \in \omega_\infty$ since $\omega_\infty \subseteq \Omega$ and $\Omega$ is open, i.e., $\Omega \cap \Gamma = \emptyset$. Hence,  the case that $\widetilde{T}_\ell \notin \TT_\infty$ for all $\ell \in \N_0$ cannot occur. This proves~\eqref{eq:omegainfty}.

\textbf{Step~2 (Construction of \boldsymbol{$\widehat{g}_\infty$}).}
By Lemma~\ref{lemma:apriori}, there exists $\phi_\infty^\exact \in H^{-1/2}(\Gamma)$ such that $\norm{\phi^\exact - \phi_\infty^\exact}_{H^{-1/2}(\Gamma)} \xrightarrow{\ell \to \infty} 0$. We define $g_{\infty,\ell} \coloneqq J_\ell(g - V\phi_\infty^\star)$ and denote by $\widehat{g}_{\infty,\ell}$ its discrete lifting. To construct $\widehat{g}_\infty \in H^1(\Omega)$, we first define $\widehat{g}_\infty|_{\Omega \setminus \omega_\infty} \coloneqq 0$. Second, since $\omega_\infty$ coincides with $\widetilde{\omega}_\infty$ up to a set of measure zero, it suffices to define $\widehat{g}_\infty|_T$ for all $T \in \TT_\infty$ such that $T \subseteq \overline{\omega}_\infty$. We note that
\begin{equation*}
 \TT_\infty = \displaystyle{\bigcup_{\ell \in \N_0} \TT_\ell^0},
 \quad \text{where }
 \TT_\ell^0 
 \coloneqq 
 \set{T \in \TT_\ell \given \TT_\ell[T] \subseteq \TT_\infty}
 \quad \text{for all } \ell \in \N_0;
\end{equation*}
see \cite[Section 4.1]{Morin2008}.
Let $T \in \TT_\infty$ with $T \subseteq \overline{\omega}_\infty$. Then, there is a smallest index $\ell_T \in \N_0$ such that $T \in \TT_{\ell_T}^0$. We define $\widehat{g}_\infty|_T \coloneqq\widehat{g}_{\infty,\ell_T}|_T$. Note that if $T \cap \Gamma = \emptyset$, then $\widehat{g}_\infty|_T = 0$. Furthermore, if $\ell_T \leq \ell$, then $\TT_{\ell_T}[T] = \TT_\ell[T]$. Since $J_\ell(g - V\phi_\infty^\exact)|_{\partial T \cap \Gamma}$ depends only on $(g - V\phi_\infty^\exact)|_{\Gamma_\ell[\partial T]}$ (see Lemma~\ref{lem:SZ}), this yields $\widehat{g}_{\infty,\ell}|_T = \widehat{g}_{\infty, \ell_T}|_T$. 

\textbf{Step~3 (\boldsymbol{$\widehat{g}_\infty$} is weakly differentiable)}. 
Let $\varphi \in C^{\infty}_0(\Omega)^d$. Then, integration by parts proves
\begin{equation*}
 \begin{split}
  &\dual{\widehat{g}_\infty}{\div \varphi}_{L^2(\Omega)} 
  = 
  \sum_{\substack{ T \in \TT_\infty \\ T \cap \Gamma \neq \emptyset}} \dual{\widehat{g}_\infty}{\div \varphi}_{L^2(T)} 
  = 
  \sum_{\substack{ T \in \TT_\infty \\ T \cap \Gamma \neq \emptyset}} \dual{\widehat{g}_{\infty,\ell_T}}{\div \varphi}_{L^2(T)} \\
  &\quad = 
  \sum_{\substack{T \in \TT_\infty \\ T \cap \Gamma \neq 0}} \dual{\widehat{g}_{\infty,\ell_T}}{\varphi \cdot n_T}_{L^2(\partial T \setminus \Gamma)} 
  - 
  \sum_{\substack{ T \in \TT_\infty \\ T \cap \Gamma \neq \emptyset}} \dual{\nabla \widehat{g}_{\infty,\ell_T}}{\phi}_{L^2(T)}. 
 \end{split}
\end{equation*}
It remains to show that the first sum on the right-hand side vanishes. Let $T_1,T_2 \in \TT_\infty$ with $T_1 \cap \Gamma \neq \emptyset \neq T_2 \cap \Gamma$ such that $F \coloneqq T_1 \cap T_2 \in \FF_\ell$ for some sufficiently large $\ell \in \N_0$ with $T_1,T_2 \in \TT_\ell$. Without loss of generality, it holds that $\ell_{T_1} \leq \ell_{T_2}$. Since $\TT_{\ell_{T_1}}[T_1] \subseteq \TT_\infty$ and $T_2 \in \TT_\infty$, with $T_1 \cap T_2 \in \FF_\ell$, it follows that $T_2 \in \TT_{\ell_{T_1}}[T_1] = \TT_{\ell_{T_2}}[T_1]$. Since $\widehat{g}_{\infty,\ell_{T_1}}$ depends only on the patch $\Omega_{\ell_{T_1}}[T_1]$ and $(g - V\phi_\infty^\exact)|_{\Gamma_{\ell_{T_1}}[T_1]}$, it follows that $\widehat{g}_{\infty,\ell_{T_1}}|_{T_1} =\widehat{g}_{\infty,\ell_{T_2}}|_{T_1}$. Because of $\widehat{g}_{\infty,\ell_{T_2}} \in H^1(T_1 \cup T_2)$, we obtain 
\begin{equation*}
 \dual{\widehat{g}_{\infty,\ell_{T_1}}}{\varphi \cdot n_{T_1}}_{L^2(F)} 
 = 
 \dual{\widehat{g}_{\infty,\ell_{T_2}}}{\varphi \cdot n_{T_1}}_{L^2(F)}
 = 
 -\dual{\widehat{g}_{\infty,\ell_{T_2}}}{\varphi \cdot n_{T_2}}_{L^2(F)}.
\end{equation*}
Since each interior facet is shared by precisely two elements, this guarantees that
\begin{equation*}
 \sum_{\substack{T \in \TT_\infty \\ T \cap \Gamma \neq \emptyset}} \dual{\widehat{g}_{\infty,\ell_T}}{\varphi \cdot n_T}_{L^2(\partial T \setminus \Gamma)} 
 = 
 0.
\end{equation*}
Therefore, $\widehat{g}_\infty$ is weakly differentiable with $\nabla \widehat{g}_\infty|_{\Omega \setminus \omega_\infty} = 0$ and $\nabla \widehat{g}_\infty|_T = \nabla\widehat{g}_{\infty,\ell_T}$ for all $T \in \TT_\infty$ with $T \subseteq \overline{\omega}_\infty$ for all $\ell \in N_0$.

\textbf{Step~4 (\boldsymbol{$\widehat{g}_{\infty,\ell} \to \widehat{g}_\infty$} in \boldsymbol{$H^1(\Omega)$})}.
Let $\eps > 0$. Since $L^2(\Gamma)$ is dense in $H^{-1/2}(\Gamma)$, there exists $\varphi_\eps \in L^2(\Gamma)$ with $\norm{(\phi^\exact - \phi_\infty^\exact) - \varphi_\eps}_{H^{-1/2}(\Gamma)} \leq \eps$. We start by showing the existence of $\ell_0 \in \N_0$ such that 
\begin{equation} \label{eq:intbound}
  \norm{\widehat{g}_{\infty,\ell} - \widehat{g}_\infty}_{L^2(\Omega \setminus \omega_\infty)} 
  \lesssim
  \eps
  \quad \text{for all } \ell \geq \ell_0.
\end{equation} 
We have
\begin{equation} \label{eq:intconv}
 \begin{split}
 &\norm{\nabla (\widehat{g}_{\infty,\ell} - \widehat{g}_\infty)}_{L^2(\Omega \setminus \omega_\infty)} 
 = 
 \norm{\nabla\widehat{g}_{\infty,\ell}}_{L^2(\omega_\ell \setminus \omega_\infty)} \\
 &\qquad \leq
 \norm{\nabla \widehat{J_\ell} (V\varphi_\eps - Q_\ell^\Gamma V \varphi_\eps)}_{L^2(\omega_\ell \setminus \omega_\infty)} + \norm{\nabla (\widehat{g}_{\infty,\ell} - \widehat{J_\ell} (V \varphi_\eps - Q_\ell^\Gamma \varphi_\eps))}_{L^2(\Omega)} 
 \end{split}
\end{equation}
We first consider the first summand of the right-hand side of~\eqref{eq:intconv}. Due to Proposition~\ref{prop:extbound}, there exists $r > 2$ and $1/2 < s_r \leq 1$ such that 
\begin{equation} \label{eq:phiepsesti}
 \norm{\nabla \widehat{J_\ell} (V\varphi_\eps - Q_\ell^\Gamma V \varphi_\eps)}_{L^r(\Omega)} 
 \eqreff{eq:boundarypoincare}{\lesssim} 
 \abs{V \varphi_\eps}_{H^{s_r}(\Gamma)} 
 \lesssim
 \norm{\varphi_\eps}_{H^{s_r - 1}(\Gamma)} 
 \lesssim 
 \norm{\varphi_\eps}_{L^2(\Gamma)}.
\end{equation} 
The H\"older inequality yields
\begin{equation*}
 \begin{split}
  \norm{\nabla \widehat{J_\ell} (V\varphi_\eps - Q_\ell^\Gamma V \varphi_\eps)}_{L^2(\omega_\ell \setminus \omega_\infty)} 
  &\leq 
  \abs{\omega_\ell \setminus \omega_\infty}^{1/2 - 1/r} \norm{\nabla\widehat{J_\ell} (V\varphi_\eps - Q_\ell^\Gamma V \varphi_\eps)}_{L^r(\omega_\ell \setminus \omega_\infty)} \\
  &\lesssim
  \abs{\omega_\ell \setminus \omega_\infty}^{1/2 - 1/r} \norm{\varphi_\eps}_{L^2(\Gamma)}.
 \end{split}
\end{equation*}
Since $\omega_\infty = \displaystyle{\bigcap_{\ell \in \N_0} \omega_\ell}$ and $\eps$ is fixed, there exists $\ell_0 \in \N_0$ such that $\abs{\omega_\ell \setminus \omega_\infty}^{1/2 - 1/r}\norm{\varphi_\eps}_{L^2(\Gamma)} \leq \eps$ for all $\ell \geq \ell_0$. For the second summand of~\eqref{eq:intconv}, we exploit $Q_\ell^\Gamma(V \phi^\exact - V\phi_\infty^\exact) = 0$ (see Lemma~\ref{lemma:apriori}) and apply Proposition~\ref{prop:extbound} with $r=2$ and $s_r = 1/2$ to obtain
\begin{equation} \label{eq:perturbation}
 \begin{split}
  &\norm{\nabla \widehat{g}_{\infty,\ell} - \widehat{J_\ell} (V \varphi_\eps - Q_\ell^\Gamma \varphi_\eps)}_{L^2(\Omega)}
  =
  \norm{\nabla \widehat{J_\ell}(V(\phi^\exact - \phi_\infty^\exact - \varphi_\eps) - Q_\ell^\Gamma V(\phi^\exact - \phi_\infty^\exact - \varphi_\eps))}_{L^2(\Omega)} \\
  &\qquad \eqreff{eq:boundarypoincare}{\lesssim} 
  \abs{V(\phi^\exact - \phi_\infty^\exact - \varphi_\eps)}_{H^{1/2}(\Gamma)} 
  \lesssim \norm{\phi^\exact - \phi_\infty^\exact - \varphi_\eps}_{H^{-1/2}(\Gamma)} \leq \eps.
 \end{split}
\end{equation}
Altogether, this yields~\eqref{eq:intbound}. Furthermore, we have that 
\begin{equation} \label{eq:omegainftyesti}
 \begin{split}  
  &\norm{\nabla(\widehat{g}_{\infty,\ell} - \widehat{g}_\infty)}_{L^2(\omega_\infty)}^2 
  = 
  \sum_{\substack{T \in \TT_\infty \\ T \cap \Gamma \neq \emptyset}} \norm{\nabla(\widehat{g}_{\infty,\ell} - \widehat{g}_\infty)}_{L^2(T)}^2 
  = 
  \sum_{\substack{T \in \TT_\infty \\ T \cap \Gamma \neq \emptyset \\ \ell_T > \ell}} \norm{\nabla (\widehat{g}_{\infty,\ell} - \widehat{g}_\infty)}_{L^2(T)}^2 \\
  &\qquad = 
  \sum_{\substack{T \in \TT_\infty \\ T \cap \Gamma \neq \emptyset \\ T \notin \TT_\ell^0}} \norm{\nabla (\widehat{g}_{\infty,\ell} - \widehat{g}_\infty)}_{L^2(T)}^2 
  \leq 
  \sum_{T \in \TT_\infty \setminus \TT_\ell^0} \norm{\nabla(\widehat{g}_{\infty,\ell} - \widehat{g}_\infty)}_{L^2(T)}^2 \\
  &\qquad \lesssim 
  \sum_{T \in \TT_\infty \setminus \TT_\ell^0} \norm{\nabla \widehat{g}_{\infty,\ell}}_{L^2(T)}^2 
  + 
  \sum_{T \in \TT_\infty \setminus \TT_\ell^0} \norm{\nabla \widehat{g}_\infty}_{L^2(T)}^2 \\
  &\qquad = \norm{\nabla \widehat{g}_{\infty,\ell}}_{L^2(\bigcup[\TT_\infty \setminus \TT_\ell^0])}^2 + \sum_{T \in \TT_\infty \setminus \TT_\ell^0} \norm{\nabla \widehat{g}_\infty}_{L^2(T)}^2.
 \end{split}
\end{equation}
Considering the first term right-hand side of~\eqref{eq:omegainftyesti}, we have
\begin{equation} \label{eq:omegainftyesti1}
 \begin{split}
  \norm{\nabla \widehat{g}_{\infty,\ell}}_{L^2(\bigcup[\TT_\infty \setminus \TT_\ell^0])}
  &\leq
  \norm{\nabla \widehat{J_\ell} (V\varphi_\eps - Q_\ell^\Gamma V \varphi_\eps)}_{L^2(\bigcup[\TT_\infty \setminus \TT_\ell^0])} \\
  &\qquad + \norm{\nabla (\widehat{g}_{\infty,\ell} - \widehat{J_\ell} (V \varphi_\eps - Q_\ell^\Gamma \varphi_\eps))}_{L^2(\Omega)} \\
  &\eqreff{eq:perturbation}{\lesssim}
  \norm{\nabla \widehat{J_\ell} (V\varphi_\eps - Q_\ell^\Gamma V \varphi_\eps)}_{L^2(\bigcup[\TT_\infty \setminus \TT_\ell^0])} + \eps.
 \end{split}
\end{equation}
H\"older's inequality together with~\eqref{eq:phiepsesti} yields
\begin{equation*}
 \begin{split}
  &\norm{\nabla \widehat{J_\ell} (V\varphi_\eps - Q_\ell^\Gamma V \varphi_\eps)}_{L^2(\bigcup[\TT_\infty \setminus \TT_\ell^0])} \\
  &\qquad \leq
  \big|\bigcup[\TT_\infty \setminus \TT_\ell^0]\big|^{1/2 - 1/r} \norm{\nabla \widehat{J_\ell} (V\varphi_\eps - Q_\ell^\Gamma V \varphi_\eps)}_{L^r(\bigcup[\TT_\infty \setminus \TT_\ell^0])} \\
  &\qquad \lesssim
  \big|\bigcup[\TT_\infty \setminus \TT_\ell^0]\big|^{1/2 - 1/r} \norm{\varphi_\eps}_{L^2(\Gamma)}.
 \end{split}
\end{equation*}
Since $\TT_\infty = \displaystyle{\bigcup_{\ell \in \N_0} \TT_\ell^0}$ and $\eps$ is fixed, there exists $\ell_1 \in \N_0$ such that $\Big| \displaystyle{\bigcup [\TT_\infty \setminus \TT_\ell^0]}\Big|^{1/2 - 1/r} \norm{\varphi_\eps}_{L^2(\Gamma)} \leq \eps$ for all $\ell \geq \ell_1$. Hence, we see 
\begin{equation*}
 \norm{\nabla \widehat{g}_{\infty,\ell}}_{L^2(\bigcup[\TT_\infty \setminus \TT_\ell^0])}
 \lesssim
 \eps 
 \quad \text{for } \ell \geq \max\set{\ell_0,\ell_1}.
\end{equation*}
The second term on the right-hand side of~\eqref{eq:omegainftyesti1} can be estimated analogously as in~\eqref{eq:perturbation}. It remains to estimate the second term on the right-hand side of~\eqref{eq:omegainftyesti}. Due to the definition of $\widehat{g}_\infty$, the lifting estimate~\eqref{eq:liftcont} of Lemma~\ref{lem:discnorm}, Lemma~\ref{lem:SZ}, and the properties~\eqref{eq:avgprop} of $Q_\ell^\Gamma$, we obtain
\begin{equation*}
 \begin{split}
  &\norm{\nabla \widehat{g}_{\infty}}_{L^2(T)}
  = 
  \norm{\nabla \widehat{g}_{\infty,\ell_T}}_{L^2(T)}
  \eqreff{eq:liftcont}{\lesssim} 
  h_T^{-1/2} \norm{g_{\infty,\ell_T}}_{L^2(\partial T \cap \Gamma)} + \abs{g_{\infty,\ell_T}}_{H^{1/2}(\partial T \cap \Gamma)} \\
  &\qquad \eqreff{lem:SZ}{\lesssim}
  h_T^{-1/2} \norm{g - V\phi_\infty^\exact}_{L^2(\Gamma_{\ell_T}[T])} + \abs{g - V\phi_\infty^\exact}_{H^{1/2}(\Gamma_{\ell_T}[T])}
  \eqreff{eq:avgprop}{\lesssim}
  \abs{g - V\phi_\infty^\exact}_{H^{1/2}(\Gamma_{\ell_T}[T])}.
 \end{split}
\end{equation*}
Therefore, $\kappa$-shape regularity establishes
\begin{equation*}
 \sum_{T \in \TT_\infty} \norm{\nabla \widehat{g}_\infty}_{L^2(T)}^2 
 \lesssim 
 \sum_{T \in \TT_\infty} \abs{g - V\phi_\infty^\exact}_{H^{1/2}(\Gamma_\ell[T])}^2 
 \lesssim 
 \abs{g - V\phi_\infty^\exact}_{H^{1/2}(\Gamma)}^2.
\end{equation*}
Since $\TT_\infty = \displaystyle{\bigcup_{\ell \in \N_0} \TT_\ell^0}$, we can choose $\ell_2 \in \N_0$ such that
\begin{equation*}
 \sum_{T \in \TT_\infty \setminus \TT_\ell^0} \norm{\nabla \widehat{g}_\infty}_{L^2(T)}^2
 \leq
 \eps
 \quad \text{for all } \ell \geq \ell_2.
\end{equation*}
Altogether, this proves $\norm{\nabla(\widehat{g}_{\infty,\ell} - \widehat{g}_\infty)}_{L^2(\Omega)} \lesssim \eps$ for all $\ell \geq \max\set{\ell_0,\ell_1,\ell_2}$ yielding $\widehat{g}_{\infty,\ell} \xrightarrow{\ell \to \infty} \widehat{g}_\infty$ and, in particular, $\widehat{g}_\infty \in H^1(\Omega)$.

\textbf{Step~5 (\boldsymbol{$\widehat{g}_{\ell} \to \widehat{g}_\infty$} in \boldsymbol{$H^1(\Omega)$})}. We have
\begin{equation*}
 \begin{split}
  \norm{\nabla(\widehat{g}_\ell - \widehat{g}_\infty)}_{L^2(\Omega)} 
  &\leq 
  \norm{\nabla(\widehat{g}_{\infty,\ell} - \widehat{g}_\infty)}_{L^2(\Omega)} 
  + 
  \norm{\nabla\widehat{J_\ell}V(\phi_\infty^\exact - \phi_\ell^\exact)}_{L^2(\Omega)}.
 \end{split}
\end{equation*}
The first term on the right-hand side converges to zero as $\ell \to \infty$ by Step~4. Note that $Q_\ell^\Gamma V (\phi_\infty^\exact - \phi_\ell^\exact) = 0$. For the second term, we thus argue as in Step~4, and exploit the \sl{a~priori} convergence of Lemma~\ref{lemma:apriori} to obtain that
\begin{equation*}
 \begin{split}
  &\norm{\nabla\widehat{J_\ell}V(\phi_\infty^\exact - \phi_\ell^\exact)}_{L^2(\Omega)}^2 = \sum_{\substack{T \in \TT_\ell \\ T \cap \Gamma \neq \emptyset}} \norm{\nabla\widehat{J_\ell}V(\phi_\infty^\exact - \phi_\ell^\exact)}_{L^2(T)}^2 \lesssim 
  \sum_{F \in \TT_\ell^\Gamma} \abs{ V(\phi_\infty^\exact - \phi_\ell^\exact)}_{H^{1/2}(\Gamma_\ell[F])}^2 \\
  &\qquad \lesssim \abs{V\phi_\infty^\exact - V\phi_\ell^\exact}_{H^{1/2}(\Gamma)}^2 \xrightarrow{\ell \to \infty} 0.
     \end{split}
    \end{equation*}
This concludes the proof.
\end{proof}

\section{Numerical experiments}
\label{section:numerics}

This section presents some numerical experiments in $2$D that illustrate the accuracy and performance of the proposed method. All computations are done by use of the MATLAB-toolboxes HILBERT from \cite{Aurada2014} for BEM with $p \in \set{0,1}$, \cite{Funken2011} for standard FEM for the functional upper bound~\eqref{eq:compupperbound} with $q \in \set{1,2}$, and \cite{Bahriawati2005} for mixed FEM for the functional lower bound~\eqref{eq:lower-bound} with $q = 1$. Throughout, we consider Algorithm~\ref{algo:adap} for uniform ($\theta = 1$) and adaptive ($0 < \theta < 1$) mesh refinement.  
\begin{figure}[!ht]
  \resizebox{\textwidth}{!}{
   \subfloat{
    \stackunder{\stackunder[10pt]{\includegraphics{numerics/Plots/Square/squarereferencemesh.pdf}}{{\Huge$\# \TT_\ell^\Gamma = 64$, $\#\TT_\ell^\omega = 224$}}}{{\Huge $\ell=0$}}
   }
   \subfloat{
    \stackunder{\stackunder[10pt]{\includegraphics{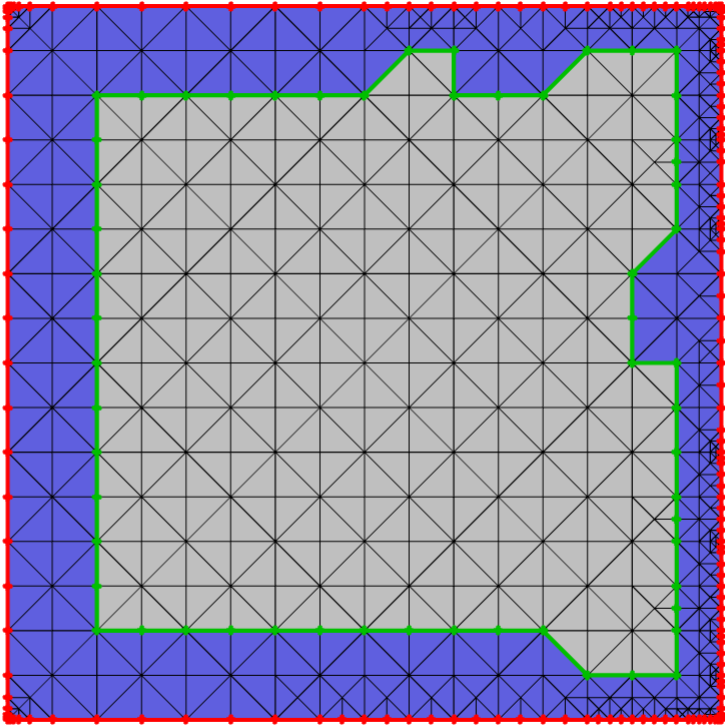}}{{\Huge $\# \TT_\ell^\Gamma = 210$, $\#\TT_\ell^\omega = 685$}}}{{\Huge $\ell=30$}}
   }
   
   \subfloat{
    \stackunder{\stackunder[10pt]{\includegraphics{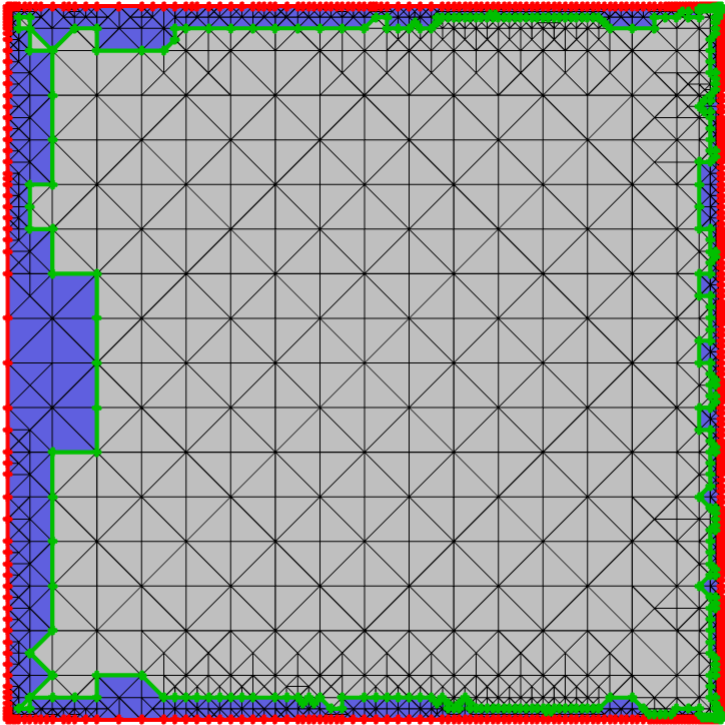}}{{\Huge $\# \TT_\ell^\Gamma = 704$, $\#\TT_\ell^\omega = 2709$}}}{{\Huge $\ell=50$}}
   }
   \subfloat{
    \stackunder{\stackunder[10pt]{\includegraphics{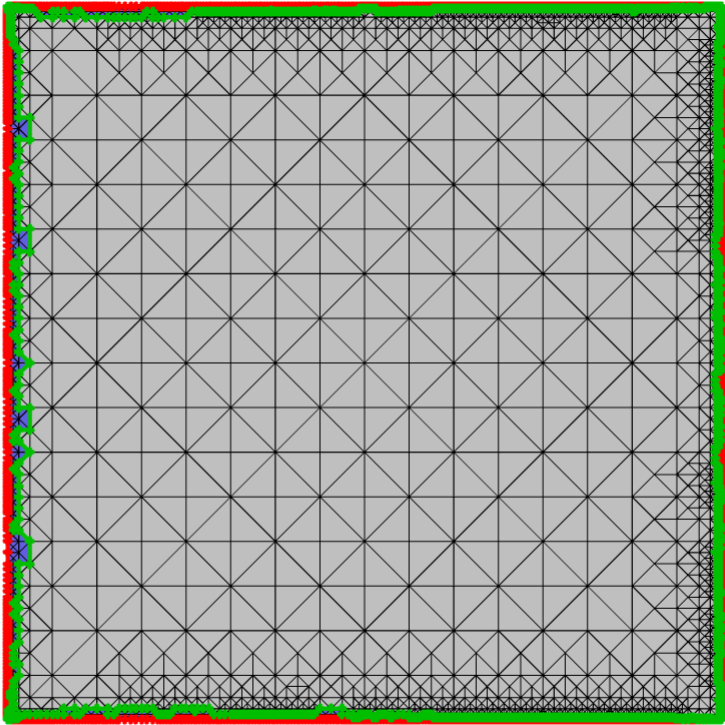}}{{\Huge $\# \TT_\ell^\Gamma = 4936$, $\#\TT_\ell^\omega = 20201$}}}{{\Huge $\ell=80$}}
   }
  }
   \caption{Adaptively generated meshes in Example~\ref{subsec:squareex} for $p=0$, $q=1$, $\theta = 0.5$, and $k = 3$. The triangles of the boundary strip $T \in \TT_\ell^\omega$ are depicted in {\color{blue} blue}, while the triangles $T \in \TT_\ell \setminus \TT_\ell^\omega$ are indicated in {\color{gray} gray}. The outer boundary $\Gamma$ is shown in {\color{red} red} and the inner boundary $\Gamma_\ell^\complement$ in {\color{green} green}.}
  
   \label{fig:squaremeshes}
 \end{figure}
\subsection{Example 1 (Square domain, smooth potential)} \label{subsec:squareex}

We consider the Laplace--Dirichlet problem \eqref{eq:poisson} with the prescribed exact solution 
\begin{equation*}
 u^\exact(x) 
 = 
 \sinh(2\pi x_1)\cos(2 \pi x_2)
 \quad \text{in }
 \Omega \coloneqq (0,1/2)^2
\end{equation*}
on the square domain $\Omega \subset \R^2$ which fulfills the scaling condition $\mathrm{diam}(\Omega) = 1/\sqrt{2} < 1$. 

Even though the exact solution $u^\exact$ and the data $g \coloneqq u^\exact|_{\Gamma}$ are smooth, one cannot expect that the density $\phi^\exact$ stemming from the indirect boundary integral formulation~\eqref{eq:bie} is smooth, as it inherits generic singularities stemming from the interior and the exterior problem. As a consequence, uniform mesh-refinement will not lead to optimal convergence behavior $\norm{\phi^\exact - \phi_\ell^\exact}_{H^{-1/2}(\Omega)} = \OO((\#\TT_\ell^\Gamma)^{-(p + 3/2)})$ for Galerkin BEM~\eqref{eq:weakdisc} with $p \in \N_0$.

We start Algorithm~\ref{algo:adap} with a coarse initial triangulation $\TT_0$ comprised of $512$ triangles. The initial mesh, some adaptively refined meshes, and the corresponding boundary strips are visualized in Figure~\ref{fig:squaremeshes}.

Figure~\ref{fig:squarerates} shows the total upper bound $\eta_\ell + \mathrm{osc}_\ell$ for lowest-order BEM $p = 0$ computed with $k=3$ for $q=1,2$ and different marking parameters $0 < \theta < 1$. We observe that Algorithm~\ref{algo:adap} exhibits optimal convergence rates $\norm{\nabla (u^\exact - u_\ell^\exact)}_{L^2(\Omega)} = \OO(\eta_\ell + \mathrm{osc}_\ell) = \OO((\#\TT_\ell^\Gamma)^{-3/2})$ for any marking parameter $\theta \in \set{0.2,0.4,0.6,0.8}$ with only minimal change with respect to the particular choice of $\theta$. 

\begin{figure}[!ht]
 \resizebox{\textwidth}{!}{
  \subfloat{
   \includegraphics{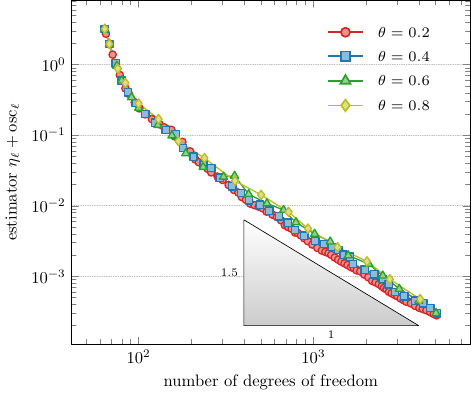}
  }
  \subfloat{
   \includegraphics{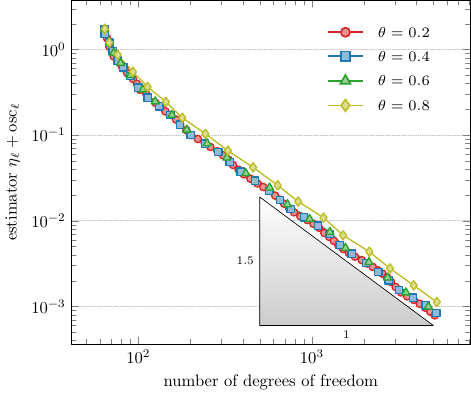}
  }
 }
  \caption{Convergence rates of the full error estimator $\eta_\ell + \mathrm{osc}_\ell$ in Example~\ref{subsec:squareex} for lowest-order BEM $p = 0$ with $k=3$ and different marking parameters $0 < \theta < 1$ for $q=1$ (left) and $q=2$ (right).}
  \label{fig:squarerates}
\end{figure}

A comparison of different strip depths $k \in \set{1, \ldots , 6}$ and how the choice of $k$ influences the functional estimators $\eta_\ell$ and $\tau_\ell$, as well as the oscillations $\mathrm{osc}_\ell$, is shown in Figure~\ref{fig:squarekerror}. Additionally, the (approximate) potential error $\norm{\nabla(u^\exact - u_\ell^\exact)}_{L^2(\Omega)} \approx \norm{\nabla I_\ell(u^\exact - u_\ell^\exact)}_{L^2(\Omega)}$, where $I_\ell \colon C(\overline{\Omega}) \to \SS^2(\widehat{\TT}_\ell)$ denotes the $\PP^2$-nodal interpolation operator with respect to the mesh $\widehat{\TT}_\ell$, which is obtained by uniformly refining $\TT_\ell$, is shown. As expected, the quality of the estimator $\eta_\ell$ improves with increasing strip depth $k$; see also Remark~\ref{rem:whjustification}.

\begin{figure}[!ht]
 \resizebox{\textwidth}{!}{
  \subfloat{
    \includegraphics{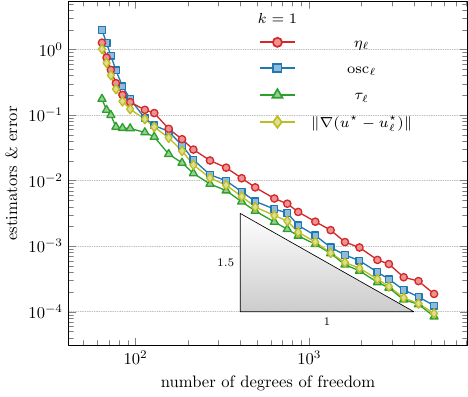}
  }
  \subfloat{
    \includegraphics{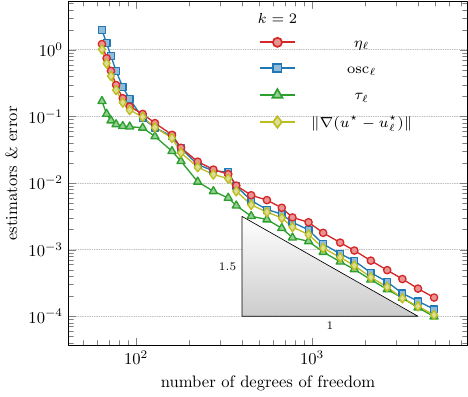}
  }
  
  \subfloat{
    \includegraphics{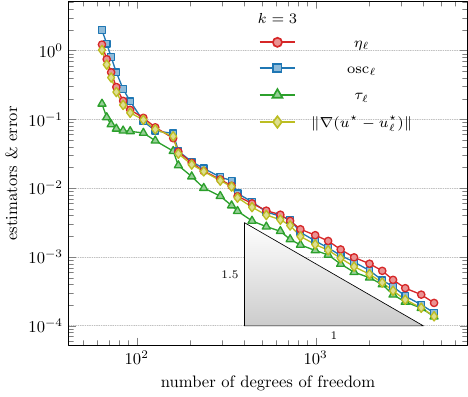}
  }
 }
  \\
  \resizebox{\textwidth}{!}{
  \subfloat{
    \includegraphics{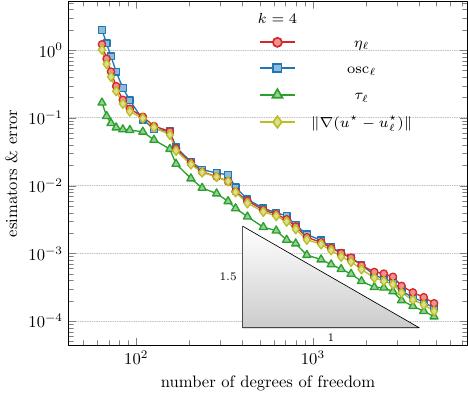}
  }
  \subfloat{
    \includegraphics{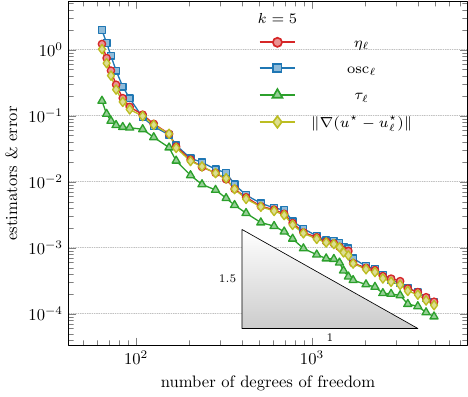}
  }
  \subfloat{
    \includegraphics{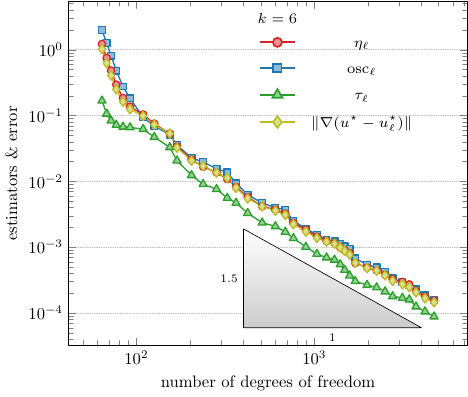}
  }
  }
  \caption{Functional error bounds for different strip depths $k \in \set{1, \ldots , 6}$ in Example~\ref{subsec:squareex} with $p=0$, $q=1$, and $\theta = 0.5$. We compare the estimators $\eta_\ell$, $\tau_\ell$, and $\mathrm{osc}_\ell$, as well as the (approximate) potential error $\norm{\nabla(u^\exact - u_\ell^\exact)}_{L^2(\Omega)}$.}
  \label{fig:squarekerror}
\end{figure}

For a more thorough look at the accuracy of the estimator $\eta_\ell$, we refer to Figure~\ref{fig:squareefficiency}, where we compare the experimental reliability constants $\norm{\nabla(u^\exact - u_\ell^\exact)}_{L^2(\Omega)}/\eta_\ell$ for different strip depths $k \in \set{1, \ldots , 6}$ and different polynomial degrees $q =1,2$. We observe that higher-order auxiliary problems achieve better accuracy even for lower strip depths $k$.
\begin{figure}[!ht]
  \resizebox{\textwidth}{!}{
    \subfloat{
      \includegraphics{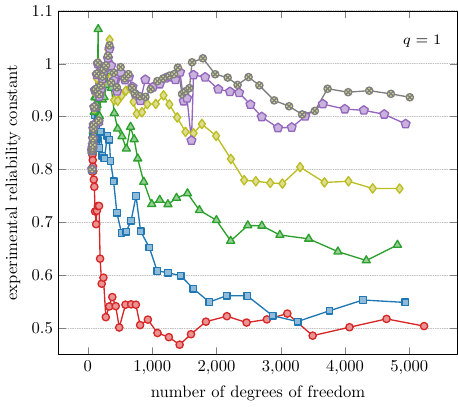}
    }

    \subfloat{
      \includegraphics{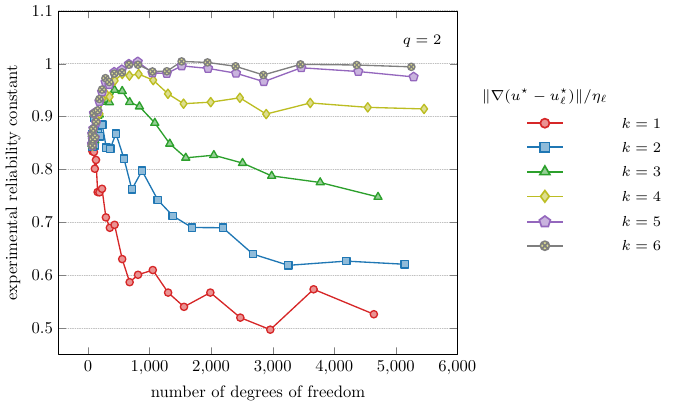}
    }
  }
 \caption{Experimental reliability constants $\norm{\nabla(u^\exact - u_\ell^\exact)}_{L^2(\Omega)}/\eta_\ell$ in Example~\ref{subsec:squareex} with $p=0$, $\theta = 0.5$, $k \in \set{1, \ldots , 6}$, $q=1$ (left), and $q=2$ (right).}
 \label{fig:squareefficiency}
\end{figure}

Figure~\ref{fig:squarep1} depicts the convergence behaviour of $\PP1$-BEM for $q=2$, $k=3$, and different adaptivity parameters $\theta \in \set{0.2,0.4,0.6,0.8}$, as well as the different functional estimators $\eta_\ell$, $\mathrm{osc}_\ell$, and $\tau_\ell$ together with the (approximate) potential error $\norm{\nabla (u^\exact - u_\ell^\exact)}_{L^2(\Omega)}$. Similiar to the lowest-order case, we observe optimal convergence rates $\norm{\nabla(u^\exact - u_\ell^\exact)}_{L^2(\Omega)} = \OO(\eta_\ell + \mathrm{osc}_\ell) = \OO((\#\TT_\ell^\Gamma)^{-5/2})$. Moreover, we see that the functional upper bound $\eta_\ell$ achieves high accuracy also for $p=1$.

\begin{figure}[!ht]
\resizebox{\textwidth}{!}{
  \subfloat{
    \includegraphics{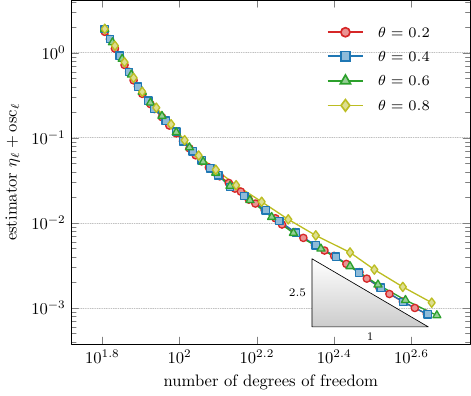}
  }

\subfloat{
    \includegraphics{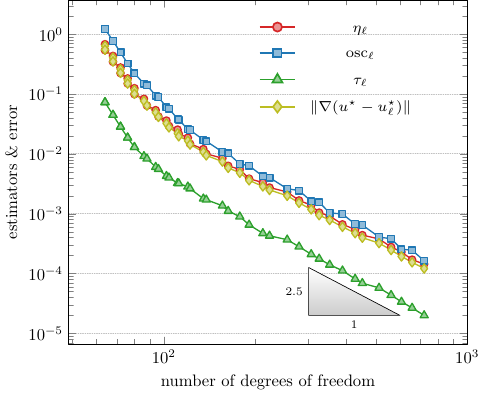}
  }
}
  \caption{Convergence rates of the full error estimator $\eta_\ell + \mathrm{osc}_\ell$ in Example~\ref{subsec:squareex} for $\PP1$-BEM $p = 1$ with $k=3$, $q=2$, and different adaptivity parameters $\theta$ (left), as well as the estimators $\eta_\ell$, $\mathrm{osc}_\ell$, and $\tau_\ell$, as well as the (approximate) potential error $\norm{\nabla(u^\exact - u_\ell^\exact)}_{L^2(\Omega)}$ for $\PP1$-BEM with $\theta = 0.5$, $q=2$, and $k = 6$ (right).}
  \label{fig:squarep1}
\end{figure}

\subsection{Example 2 (Z-shaped domain, smooth potential)} \label{subsec:zshapeex}

We consider the Laplace--Dirichlet problem \eqref{eq:poisson} with constant Dirichlet data $g = 1$ on the Z-shaped domain
\begin{equation*}
 \Omega \coloneqq (0,1/2)^2 \setminus \set{(x,y) \in \R^2 \given -1/4 \leq x \leq 0, -1/4 \leq y \leq -1/4 - x}.
\end{equation*}
Although the exact solution $u^\exact$ is smooth (even constant), the solution $\phi^\exact$ to the indirect boundary integral formulation~\eqref{eq:bie} inherits generic singularities from the exterior problem, which occur at the convex corners of the domain (but not at the reentrant corner). We start Algorithm~\ref{algo:adap} with an initial partition $\TT_0$ into $1792$ triangles. 

Figure~\ref{fig:zshape1meshes} depicts some adaptively generated meshes, which show that the adaptive mesh-refinement algorithm is able to capture the singularities of the integral density $\phi^\exact$. 

\begin{figure}[!ht]
  \resizebox{\textwidth}{!}{
   \subfloat{
    \stackunder{\stackunder[10pt]{\includegraphics{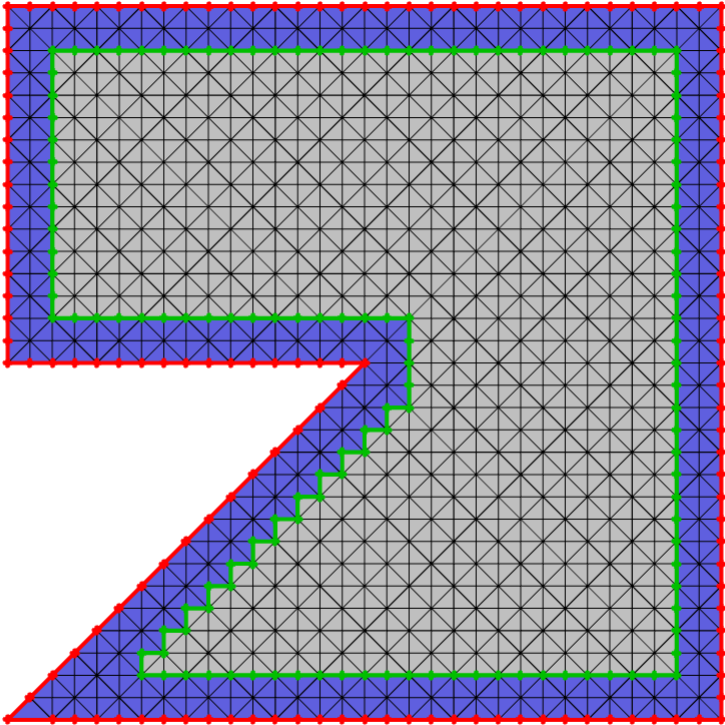}}{{\Huge $\# \TT_\ell^\Gamma = 143$, $\#\TT_\ell^\omega = 579$}}}{{\Huge $\ell=0$}}
   }
   \subfloat{
    \stackunder{\stackunder[10pt]{\includegraphics{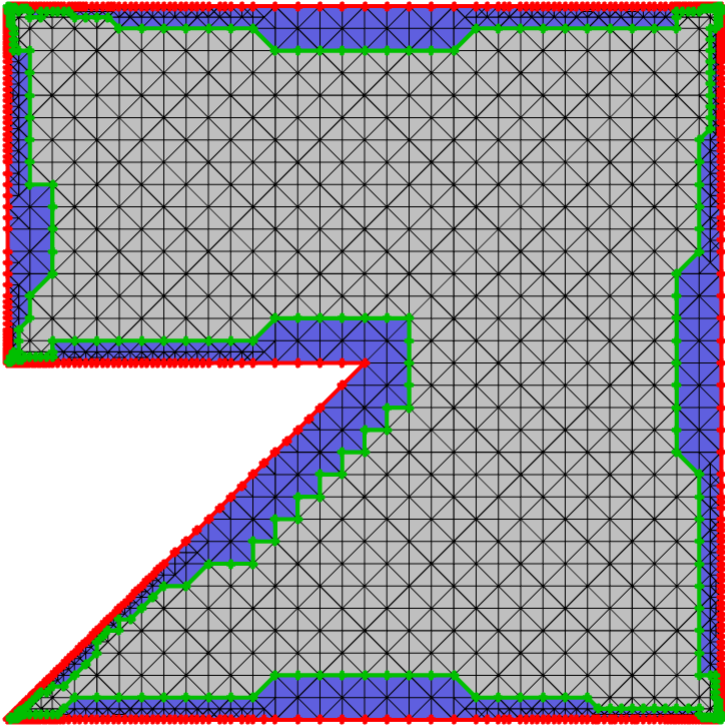}}{{\Huge $\# \TT_\ell^\Gamma = 925$, $\#\TT_\ell^\omega = 3248$}}}{{\Huge $\ell=50$}}
   }
   
   \subfloat{
    \stackunder{\stackunder[10pt]{\includegraphics{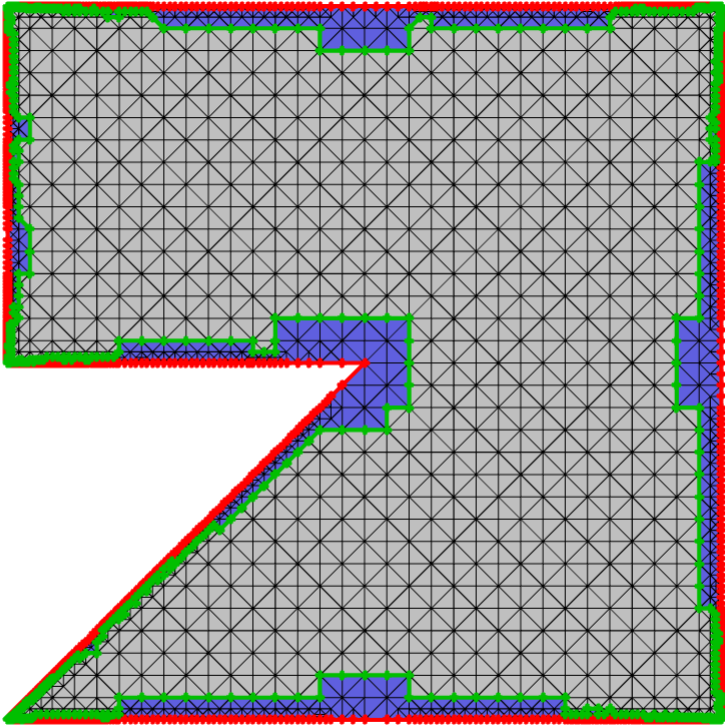}}{{\Huge $\# \TT_\ell^\Gamma = 2247$, $\#\TT_\ell^\omega = 8138$}}}{{\Huge $\ell=70$}}
   }
   \subfloat{
    \stackunder{\stackunder[10pt]{\includegraphics{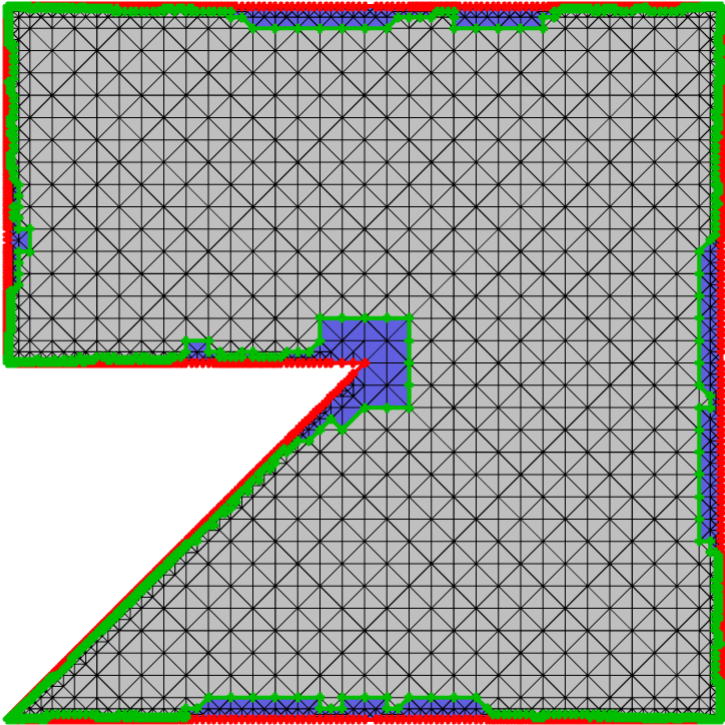}}{{\Huge $\# \TT_\ell^\Gamma = 4313$, $\#\TT_\ell^\omega = 15856$}}}{{\Huge $\ell=90$}}
   }
  }
   \caption{Adaptively generated meshes in Example~\ref{subsec:zshapeex} for $p=0$, $q=1$, $\theta = 0.5$, and $k = 3$. The triangles of the boundary strip $T \in \TT_\ell^\omega$ are depicted in {\color{blue} blue}, while the triangles $T \in \TT_\ell \setminus \TT_\ell^\omega$ are indicated in {\color{gray} gray}. The outer boundary $\Gamma$ is shown in {\color{red} red} and the inner boundary $\Gamma_\ell^\complement$ in {\color{green} green}.}
  
   \label{fig:zshape1meshes}
 \end{figure}

The convergence behaviour for different adaptivity parameters $\theta \in \set{0.2,0.4,0.6,0.8}$ is shown in Figure~\ref{fig:zshape1rates}. Since $q=1$ and $q=2$ lead to similar results in terms of convergence rates of $\PP0$-BEM and $\PP1$-BEM, we only show the results for $q=1$. As in Example~\ref{subsec:squareex}, we observe optimal convergence rates for $p = 0,1$.

\begin{figure}[!ht]
  \resizebox{\textwidth}{!}{
    \subfloat{
      \includegraphics{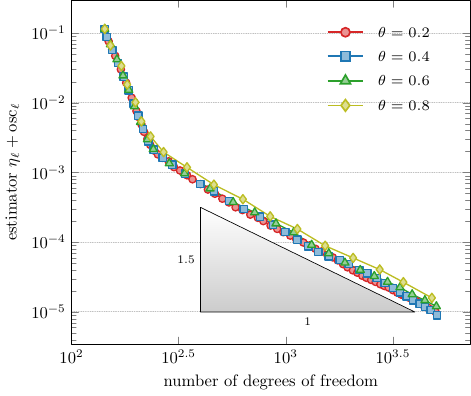}
    }
    \subfloat{
      \includegraphics{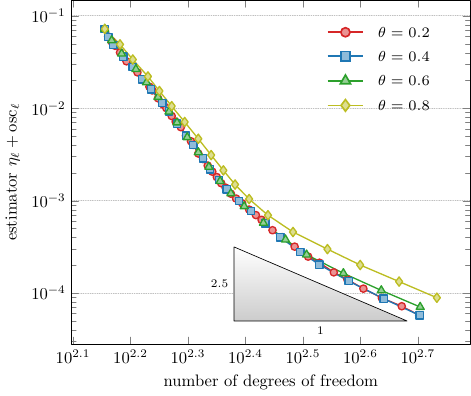}
    }
  }
  \caption{Convergence rates of the full error estimator $\eta_\ell + \mathrm{osc}_\ell$ in Example~\ref{subsec:zshapeex} for $p = 0$, $q = 1$, and $k = 3$ (left), as well as $p = 1$, $q=2$, and $k = 3$ (right) for different adaptivity parameters $\theta$.}
  \label{fig:zshape1rates}
\end{figure}

Figure~\ref{fig:zshapekerror} shows a comparison of different strip depths $k \in \set{1, \ldots , 6}$ for Example~\ref{subsec:zshapeex} for $q=2$. We observe that, even for small $k$, the functional estimator $\eta_\ell$ exhibits excellent accuracy.

\begin{figure}[!ht]
  \resizebox{\textwidth}{!}{
   \subfloat{
     \includegraphics{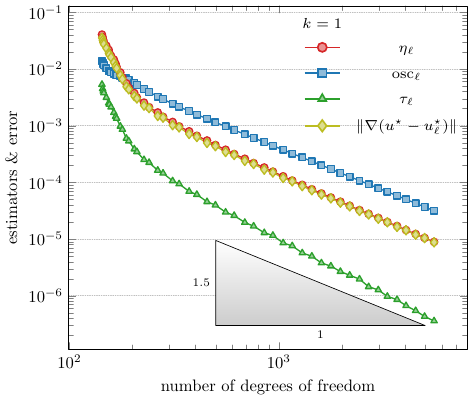}
   }
   \subfloat{
     \includegraphics{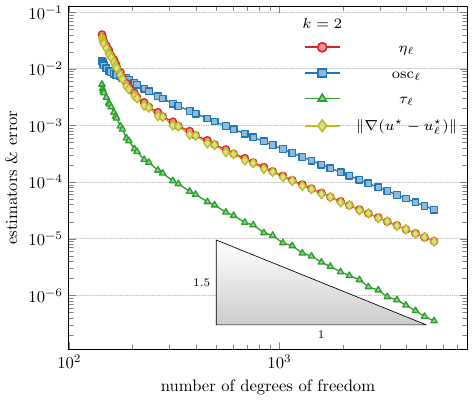}
   }
   
   \subfloat{
     \includegraphics{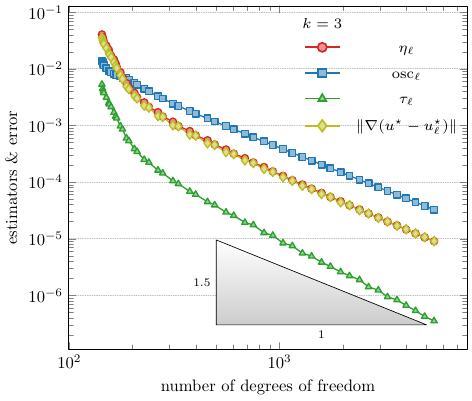}
   }
  }
   \\
   \resizebox{\textwidth}{!}{
   \subfloat{
     \includegraphics{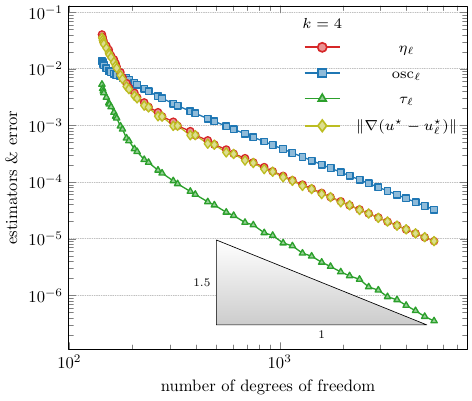}
   }
   \subfloat{
     \includegraphics{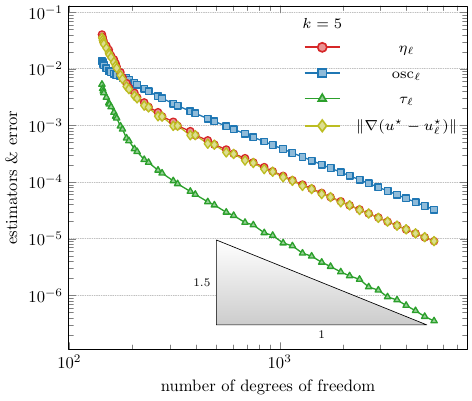}
   }
   \subfloat{
     \includegraphics{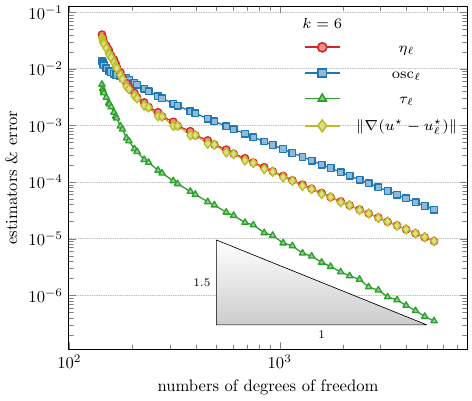}
   }
   }
   \caption{Functional error bounds for different strip depths $k \in \set{1, \ldots , 6}$ in Example~\ref{subsec:zshapeex} with $p=0$, $q=2$, and $\theta = 0.5$. We compare the estimators $\eta_\ell$, $\tau_\ell$, and $\mathrm{osc}_\ell$, as well as the (approximate) potential error $\norm{\nabla(u^\exact - u_\ell^\exact)}_{L^2(\Omega)}$.}
   \label{fig:zshapekerror}
 \end{figure}

As expected, the proposed error estimation strategy does not depend on the shape of $\Omega$, as it exhibits optimal convergence rates for the Z-shaped domain as well.

\subsection{Example 3 (Z-shaped domain, singular potential)} \label{subsec:zshapeex2}

We consider the Laplace--Dirichlet problem~\eqref{eq:poisson} with the prescribed exact solution
\begin{equation*}
  u^\exact(x) 
  = 
  r^{4/7}\cos(4\varphi/7)
\end{equation*}
given in polar coordinates $(r,\varphi)$ on the Z-shaped domain 
\begin{equation*}
 \Omega \coloneqq (-1/4,1/4)^2 \setminus \set{(x,y) \in \R^2 \given -1/4 \leq x \leq 0 \text{ and } -1/4 \leq y \leq -1/4- x}.
\end{equation*}
Unlike Example~\ref{subsec:squareex} and Example~\ref{subsec:zshapeex}, the potential $u^\exact$ is singular at the reentrant corner $(0,0)$. Figure~\ref{fig:zshape2meshes} shows adaptively generated meshes, which show that the adaptive refinement strategy is able to resolve the singularity at the reentrant corner.

\begin{figure}[!ht]
\resizebox{\textwidth}{!}{
  \subfloat{
    \stackunder{\stackunder[10pt]{\includegraphics{numerics/Plots/Zshape1/zshapereferencemesh.pdf}}{{\Huge $\# \TT_\ell^\Gamma = 143$, $\#\TT_\ell^\omega = 579$}}}{{\Huge $\ell=0$}}
  }
  \subfloat{
    \stackunder{\stackunder[10pt]{\includegraphics{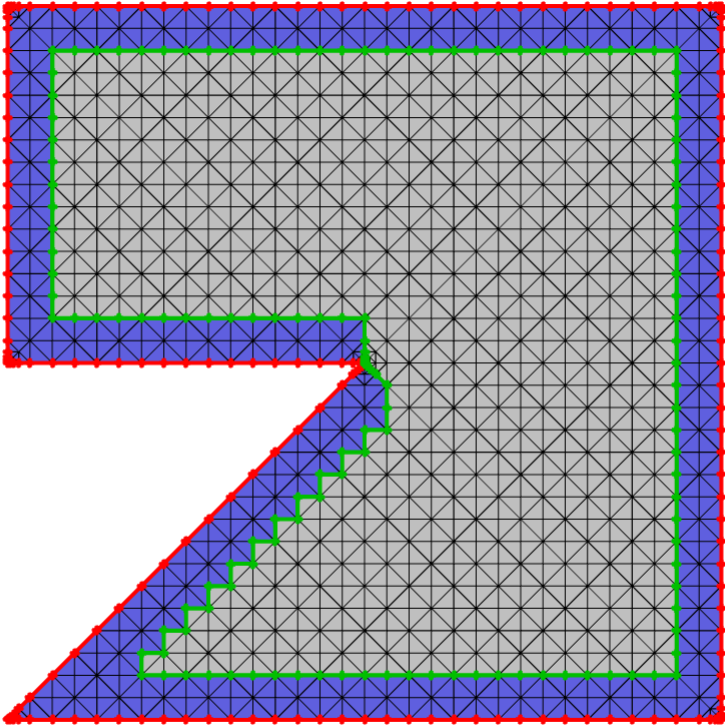}}{{\Huge $\# \TT_\ell^\Gamma = 204$, $\#\TT_\ell^\omega = 724$}}}{{\Huge $\ell=20$}}
  }
  
  \subfloat{
    \stackunder{\stackunder[10pt]{\includegraphics{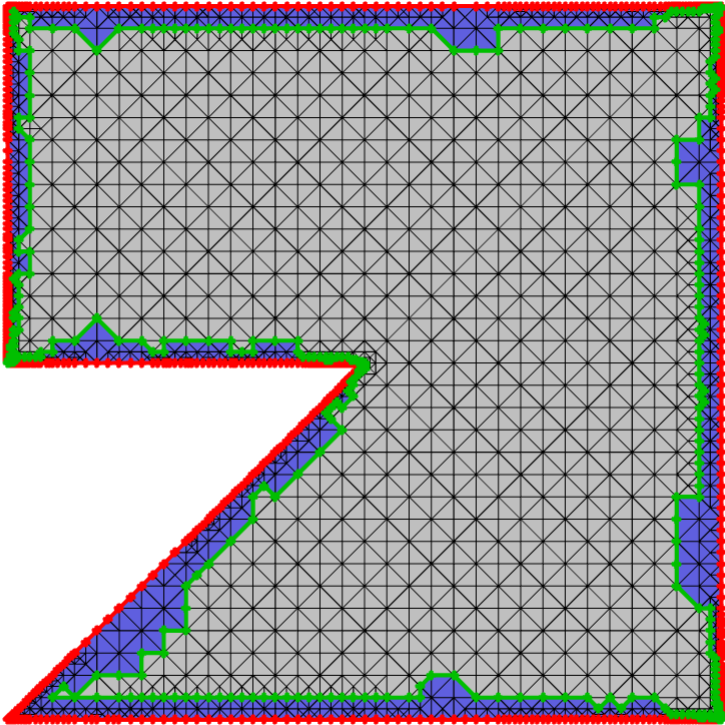}}{{\Huge $\# \TT_\ell^\Gamma = 990$, $\#\TT_\ell^\omega = 4667$}}}{{\Huge $\ell=50$}}
  }
  \subfloat{
    \stackunder{\stackunder[10pt]{\includegraphics{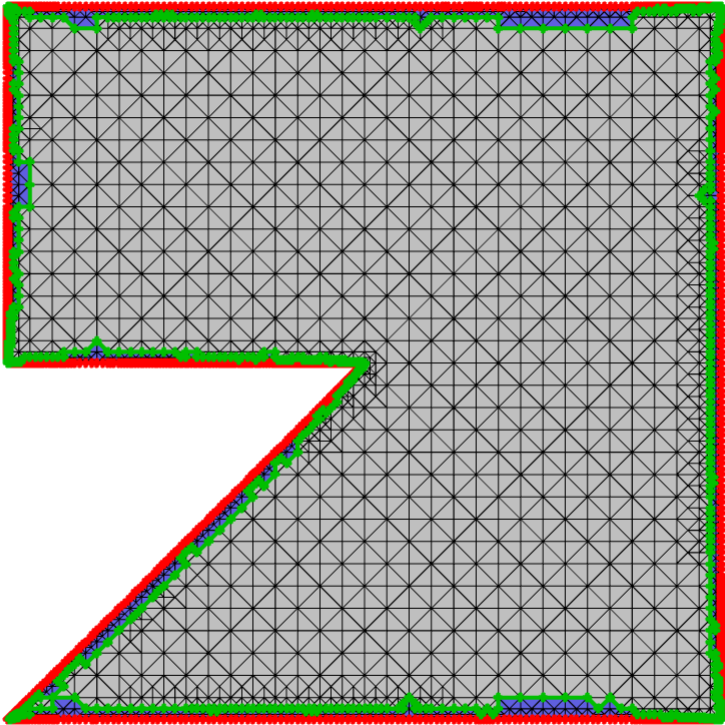}}{{\Huge $\# \TT_\ell^\Gamma = 3454$, $\#\TT_\ell^\omega = 13512$}}}{{\Huge $\ell=70$}}
  }
}
  \caption{Adaptively generated meshes in Example~\ref{subsec:zshapeex2} for $p=0$, $q=1$, $\theta = 0.5$, and $k = 2$. The triangles of the boundary strip $T \in \TT_\ell^\omega$ are depicted in {\color{blue} blue}, while the triangles $T \in \TT_\ell \setminus \TT_\ell^\omega$ are indicated in {\color{gray} gray}. The outer boundary $\Gamma$ is shown in {\color{red} red} and the inner boundary $\Gamma_\ell^\complement$ in {\color{green} green}.}
  
  \label{fig:zshape2meshes}
\end{figure}

Figure~\ref{fig:zshape2rates} shows the convergence behaviour for different adaptivity parameters $\theta \in \set{0.2,0.4,0.6,0.8}$ for BEM with $p=0,1$. As in Example~\ref{subsec:squareex} and Example~\ref{subsec:zshapeex}, we observe optimal convergence rates.

\begin{figure}[!ht]
  \resizebox{\textwidth}{!}{
    \subfloat{
      \includegraphics{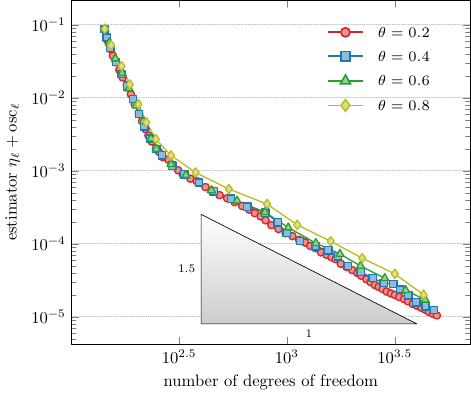}
    }
    \subfloat{
      \includegraphics{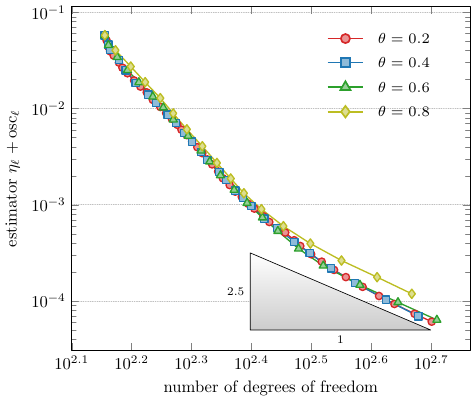}
    }
  }
  \caption{Convergence rates of the full error estimator $\eta_\ell + \mathrm{osc}_\ell$ in Example~\ref{subsec:zshapeex2} for $p = 0$ and $q= 1$ (left), as well as $p=1$ and $q=2$ (right).}
  \label{fig:zshape2rates}
\end{figure}

The estimators $\eta_\ell$, $\tau_\ell$, and $\mathrm{osc}_\ell$ for $p \in \set{0,1}$, $q \in \set{1,2}$, and $k = 3$ are shown in Figure~\ref{fig:zshape2efficiency}. As in Example~\ref{subsec:zshapeex}, we observe that $\eta_\ell$ exhibits excellent accuracy and that higher polynomial degrees $q$ improve the accuracy of the functional error estimator. 

\begin{figure}[!ht]
  \resizebox{\textwidth}{!}{
    \subfloat{
      \includegraphics{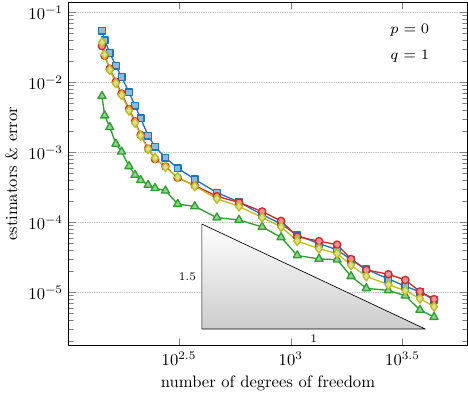}
    }
    \subfloat{
      \includegraphics{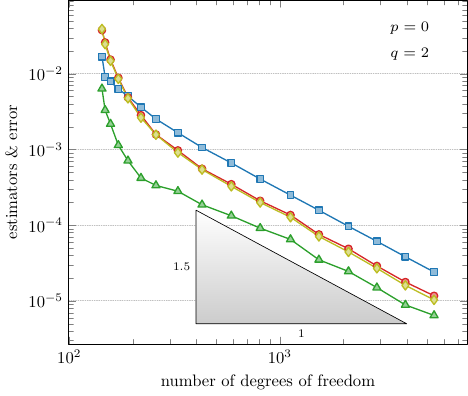}
    }
    \subfloat{
      \includegraphics{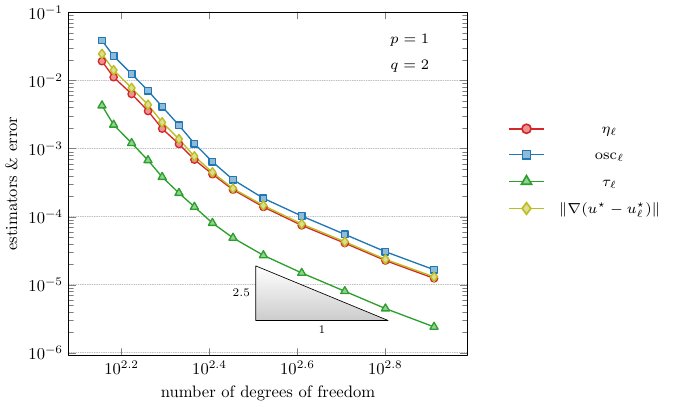}
    }
  }
  \caption{Functional error bounds in Example~\ref{subsec:zshapeex2} with $\theta = 0.5$ and $k = 3$ for $p=0$ and $q=1$ (left), $p=0$ and $q=2$ (middle), as well as $p=1$ and $q=2$ (right).}
  \label{fig:zshape2efficiency}
\end{figure}

\printbibliography
\end{document}